\crefname{hypothesis}{Hypothesis}{Hypotheses}
\title{Convexification with bounded gap for randomly projected quadratic optimization
\thanks{To be submitted.
\funding{This work was supported by JSPS KAKENHI (17H01699 and 19H04069) and JSTERATO (JPMJER1903).}}}
\author{Terunari Fuji\thanks{Graduate School of Information Science and Technology, The University of Tokyo, Tokyo 113-8656, Japan.
  (\email{teru0818258@g.ecc.u-tokyo.ac.jp}).}
\and Pierre-Louis Poirion\thanks{Center for Advanced Intelligence Project, RIKEN, Tokyo 103-0027, Japan.
  (\email{pierre-louis.poirion@riken.jp}).}
\and Akiko Takeda\thanks{Graduate School of Information Science and Technology, The University of Tokyo, Tokyo 113-8656, Japan and
Center for Advanced Intelligence Project, RIKEN, Tokyo 103-0027, Japan.
  (\email{takeda@mist.i.u-tokyo.ac.jp}).}
}
\newcommand{\ds}{\displaystyle}
\newcommand{\eps}{\varepsilon}
\newcommand{\R}{\mathbb R}
\newcommand{\norm}[1]{\left\|#1\right\|}
\newcommand{\tliner}[2]{\langle #1, #2\rangle}
\newcommand{\T}{{\mathsf T}}
\newcommand{\F}{{\mathcal F}}
\newcommand{\C}{{\mathcal C}}
\DeclareMathOperator{\diag}{diag}
\DeclareMathOperator{\Prob}{Prob}
\DeclareMathOperator{\tr}{tr}
\newcommand{\cond}{\mathrm{cond}}
\newcommand{\opt}[1]{\mathrm{opt}(#1)}
\renewcommand{\P}{{\mathbf P}}
\newcommand{\RP}{\mathbf{RP}}
\newcommand{\CRP}{\mathbf{CRP}}
\newcommand{\COMM}[2]{{
\ifthenelse{\equal{#1}{TF}}{\color{red}}{
\ifthenelse{\equal{#1}{PL}}{\color{blue}}{
\ifthenelse{\equal{#1}{AT}}{\color{magenta}}}}[#1: #2]}}
\begin{document}

\maketitle

\begin{abstract}
  Random projection techniques based on Johnson-Lindenstrauss lemma are used for randomly aggregating the constraints or variables of optimization problems while approximately preserving their optimal values, that leads to smaller-scale optimization problems.
  D’Ambrosio et al. have applied random projection to a quadratic optimization problem so as to decrease the number of decision variables.
  Although the problem size becomes smaller, the projected problem will also almost surely be non-convex if 
  the original problem is non-convex, and hence will be hard to solve.

  In this paper, by focusing on the fact that the level of the  non-convexity of a non-convex quadratic optimization problem can be alleviated 
  by random projection, we find an approximate global optimal value of the problem by attributing it to a convex problem with smaller size. 
  To the best of our knowledge, our paper is the first to use random projection for convexification of non-convex optimization problems.
  We evaluate the approximation error between  optimum values of a non-convex optimization problem and its convexified randomly projected problem.

\end{abstract}

\begin{keywords}
  random projection, Johnson-Lindenstrauss lemma, quadratic programming, non-convex optimization
\end{keywords}

\begin{AMS}
  90C20, 90C26
\end{AMS}

\section{Introduction}\label{sec:introduction}


We consider the following  non-convex quadratic optimization problem having large-scale decision variables $x\in\R^n$:
\begin{equation}\label{eq:problem_P}
\P \equiv \min_x \{x^\T Qx + c^\T x\mid Ax \le b\},
\end{equation}
where $Q\in\R^{n\times n}$ is a symmetric matrix, $A\in\R^{m\times n}, c\in\R^n$ and $b\in\R^m$.
Here, we assume that the feasible region is full dimensional (hence, equality constraints are excluded) and 
at least one of the eigenvalues of $Q$ is positive.

In this paper, we find a feasible solution with a bounded approximation error to the optimal value of \cref{eq:problem_P}.
For the purpose, we reduce the non-convex problem
to a lower-dimensional convex optimization problem using random projection and convexification techniques,
and evaluate the gap between optimum values of the two optimization problems.
Random projection 
refers to the technique that maps a set of points $X \subsetneq \mathbb{R}^n$ to a set $PX  \subsetneq \mathbb{R}^d $ in a lower dimensional subspace
with random matrices $P \in \mathbb{R}^{d \times n}$ 
in a way that some intrinsic properties of the set $X$ are approximately preserved with high probability. 
The main idea of random projections comes from the Johnson-Lindenstrauss lemma (\cite{jllemma})
that states that if the probability distribution of $P$ is properly chosen then there exists $d<n$ such that the Euclidean distance between any pair $x,y \in X $ of points in $X$ is approximately preserved with high probability, i.e. $\|Px-Py\| \approx \|x-y\|.$

Random projections have been already used in various studies to reduce the size of an input matrix while retaining most of its information (see for example \cite{woodruff2014, drineas2016, derezinski202}),
and they are often used for some machine learning problems.
Notice that this framework may also be referred as \emph{sketching}, if random projection matrices are used not to reduce the dimension of the decision variables $x$ 
but to reduce 
the sample size of the data
in the problem. For example in a least-square problem setting, i.e. $\min\limits_{x \in C} \|y-X x\|^2$ where $C\subseteq \mathbb{R}^n$ is a convex set, $X \in \mathbb{R}^{m'\times n}$ is the design matrix and $y \in \mathbb{R}^{m'}$ is the response vector,
the sample size $m'$ is reduced using random projections. 
The use of random projections to approximate least-squares problems has been extensively studied by, for example, \cite{Mahoney2011, chen2020, pilanci2014, wang2017}.
Random projections have also been used in a non-convex setting: in \cite{NIPS2010}, the authors apply random projections for the $k$-means clustering problem to reduce 
the number of data points, i.e., the sample size in the problem. Since the reduced problem is also a non-convex optimization problem, the error from its optimum value is evaluated under the assumption that an approximation algorithm is used.


While random projection has also been applied to reduce the number of constraints of a Linear Problem (LP) by randomly aggregating them in  \cite{MOR_RP}, it has been applied to a Quadratic Problem (QP) so as to decrease the number of decision variables in  \cite{d2019random}.
In \cite{bluhm2019} the authors apply random projections to Semi-Definite Programming (SDP):  the variables of the SDP are randomly projected to a space of lower dimension.



In this paper, we show that random projections can also be applied to convexify a non-convex optimization problem. More precisely, we will use random projection to define a convexification of \cref{eq:problem_RP} and give some error bounds for the error between these two problems. Notice that
in \cite{d2019random} the authors already use a random projection matrix $P\in\R^{d\times n}$ to project \cref{eq:problem_P} into the following QP:
\begin{equation}\label{eq:problem_RP}
\RP \equiv \min_u \{u^\T \bar Q u + \bar c^\T u\mid \bar A u \le b\},
\end{equation}
where $u\in\R^d$, $\bar Q = PQP^\T, \bar c = Pc$ and $\bar A = AP^\T$.
However, although \cref{eq:problem_RP} is a QP of smaller size, i.e. the variables of \cref{eq:problem_RP} belong to a smaller dimensional space, if \cref{eq:problem_P} is non-convex then the projected problem will also almost surely be non-convex, and hence will be hard to solve.
Therefore, we focus on the fact that if $d$ is small enough then eigenvalues of the matrix $\bar{Q}$ are skewed towards positive values (see \cref{fig:eigenvalues_Q_PQP} shown later), which implies that ignoring negative eigenvalues for the reduced matrix due to the convexification does not lose much information about problem \cref{eq:problem_P}.
In this paper, taking advantage of the fact, 
we show the following: 
if the dimension $d$ is carefully chosen then \cref{eq:problem_P} can be approximated by a convex QP of smaller size. More precisely, we consider the following convex QP: 
\begin{equation}\label{eq:problem_CRP}
\CRP \equiv \min_u \{u^\T \bar Q^+ u + \bar c^\T u\mid \bar A u \le b\},
\end{equation} 
where $\bar Q^+ = \F^+(\bar Q)$ is the projection of $\bar Q$ onto the positive semidefinite cone.
Using an optimum $u^*$ of \cref{eq:problem_CRP}, we have a feasible solution $P^\top u^*$ to \cref{eq:problem_P},
for which an approximation error from the optimum value of \cref{eq:problem_P} is estimated.

To the best of our knowledge, our paper is the first to use random projection for convexification of non-convex optimization problems.
We evaluate the approximation error between  optimum values of a non-convex optimization problem and its convexified problem. 
More precisely, we will prove that if the dimension $d$ is properly chosen then the optimal value of $\CRP$ is a good approximation of the one of $\P$. 


The rest of this paper is organized as follows. 
In \cref{sec:preliminaries}, we introduce mathematical preliminary.
In \cref{sec:CRP}, we prove our main results on approximate optimality under the assumption that $\tr Q >0$ and in \cref{sec:scaling_preconditioning}, we discuss how to relax
the assumption while achieving similar theoretical results.
In \cref{sec:numerical_experiments} we discuss the results of numerical experiments for two types of problems: randomly generated problems and support vector machine (SVM) problems with indefinite kernel 
which are attributed to non-convex quadratic optimization problems. Conclusions follow in \cref{sec:conclusions}.

All the notations used in this paper are in \cref{Table:notations}.

\begin{table}[htb]
  \centering
  \caption{Notations in this paper.}
    \begin{tabular}{|c|l|} \hline
      Notation & Convention \\ \hline \hline
      $\C_0, \C_1, \C_2, \C_3$ & absolute constant \\ \hline
      $\C$ & $\C = \max\{\C_2, \C_3\}$ \\ \hline
      $\norm{X}_{\psi_2}$ & the sub-Gaussian norm of a sub-Gaussian random variable\\ \hline
      $\norm{X}_{\psi_1}$ & the sub-exponential norm of a sub-exponential random variable\\ \hline
      $\norm{a}$ & the Euclidean norm of a vector $a$\\ \hline
      $\norm{M}$ & the operator norm of a matrix $M$ : $\norm{M} = \max_{\norm{x}=1} \norm{Mx}$\\ \hline
      $\norm{M}_F$ & the Frobenius norm of a matrix $M$ : $\norm{M}_F = \sum_{ij} M_{ij}^2 $\\ \hline
      $\F^+(M)$ & the projection onto the positive semidefinite cone of a matrix $M$\\ \hline
      $\bm{1}$     & the all one vector \\ \hline
      $I_n$     & the identity matrix of size $n$\\ \hline
      $\diag(a)$ & the matrix whose diagonal is the vector $a$ \\ \hline
      $\cond(M)$ & the condition number of a matrix $M$ \\ \hline
      $\opt{\mathbf F}$ & the optimal value of an optimization problem $\mathbf F$ \\ \hline
      $\mathbb{E}(X)$ & expectation of a random variable $X$ \\ \hline
      $\delta_{ij}$ & Kronecker delta: $\delta_{ij}=1$ if $i=j$, $\delta_{ij}=0$ otherwise \\ \hline
      ${\rm N}(\mu, \Sigma)$ & the normal distribution with mean $\mu$ and covariance $\Sigma$ \\ \hline
    \end{tabular}
    \label{Table:notations}
\end{table}

\section{Preliminaries}\label{sec:preliminaries}

\subsection{Sub-Gaussian and sub-exponential random variables}
In this section we review some necessary definitions and theorems  
in the paper.
First, we recall some properties of sub-Gaussian and sub-exponential random variables and concentration inequalities.

\begin{definition}[Sub-Gaussian random variables]\label{def:sub-Gaussian}
  A random variable $X$ that satisfies 
  $$\mathbb{E}[\exp(X^2/K^2)] \le 2$$
  for some $K>0$ is called a sub-Gaussian random variable.
  The sub-Gaussian norm of $X$, denoted $\norm{X}_{\psi_2}$, 
  is defined to be the smallest $K$ that satisfies the above inequality, 
  or equivalently, we define 
  $$\norm{X}_{\psi_2} = \inf\{s > 0 \mid \mathbb{E}[\exp(X^2/s^2)] \le 2\}.$$
\end{definition}

\begin{lemma}[{\cite[Example 2.5.8]{vershynin2018high}}] \label{remark:gaussianissubgaussian}
  A Gaussian random variable $X\sim \mathrm N(0,\sigma^2)$ is sub-Gaussian with 
  $\norm{X}_{\psi_2} \le \C_2 \sigma$,
  where $\C_2$ is an absolute constant. 
\end{lemma}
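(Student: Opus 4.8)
The plan is to reduce to the standard normal case by homogeneity and then carry out the Gaussian integral directly. First I would observe that if $Z \sim \mathrm N(0,1)$ and $X = \sigma Z$, then for any $s>0$ we have $X^2/s^2 = Z^2/(s/\sigma)^2$, so $\mathbb{E}[\exp(X^2/s^2)] = \mathbb{E}[\exp(Z^2/(s/\sigma)^2)]$; comparing with \cref{def:sub-Gaussian} this yields $\norm{X}_{\psi_2} = \sigma \norm{Z}_{\psi_2}$. It therefore suffices to show that $\norm{Z}_{\psi_2}$ is a finite absolute constant, which we then name $\C_2$.

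Next, to bound $\norm{Z}_{\psi_2}$ I would compute $\mathbb{E}[\exp(Z^2/s^2)] = \frac{1}{\sqrt{2\pi}}\int_{-\infty}^{\infty}\exp\!\big(\tfrac{t^2}{s^2} - \tfrac{t^2}{2}\big)\,dt$. This integral is finite exactly when $1/s^2 < 1/2$, i.e. $s^2 > 2$, and in that regime the standard Gaussian integral $\int_{-\infty}^{\infty} e^{-at^2}\,dt = \sqrt{\pi/a}$ with $a = \tfrac12 - \tfrac1{s^2}$ gives $\mathbb{E}[\exp(Z^2/s^2)] = (1 - 2/s^2)^{-1/2}$. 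Requiring this to be at most $2$ amounts to $1 - 2/s^2 \ge 1/4$, i.e. $s^2 \ge 8/3$. Hence $\mathbb{E}[\exp(Z^2/s^2)] \le 2$ for every $s \ge \sqrt{8/3}$, and since $\norm{Z}_{\psi_2}$ is the infimum of such $s$, we get $\norm{Z}_{\psi_2} \le \sqrt{8/3}$. Taking $\C_2 = \sqrt{8/3}$ (or any fixed larger constant, e.g. $2$) finishes the proof, and this constant is manifestly independent of $\sigma$.

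There is essentially no serious obstacle here; the only points needing a little care are (i) checking convergence of the integral defining $\mathbb{E}[\exp(Z^2/s^2)]$, which forces $s^2 > 2$ and explains why a constant strictly larger than $\sqrt 2$ is required, and (ii) being explicit that the rescaling $X = \sigma Z$ is what makes the constant absolute rather than $\sigma$-dependent. As an alternative route one could sidestep the explicit integral by invoking the equivalence of the usual characterizations of sub-Gaussianity (polynomial moment growth, MGF bound, tail bound) applied to the Gaussian, but the direct computation above is shorter and self-contained.
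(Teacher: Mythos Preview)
Your proof is correct. Note that the paper does not actually supply its own proof of this lemma: it is stated as a citation to \cite[Example 2.5.8]{vershynin2018high} and used as a black box. Your argument---reduce to the standard normal by the homogeneity $\norm{\sigma Z}_{\psi_2}=\sigma\norm{Z}_{\psi_2}$, then evaluate the Gaussian integral $\mathbb{E}[\exp(Z^2/s^2)]=(1-2/s^2)^{-1/2}$ and solve $(1-2/s^2)^{-1/2}\le 2$---is exactly the computation one finds in Vershynin's book, so there is no discrepancy to discuss.
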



\begin{definition}[Sub-exponential random variables]\label{def:sub-exponential}
  A random variable $X$ that satisfies 
  $$\mathbb{E}[\exp(|X|/K)] \le 2$$
  for some $K>0$ is called a sub-exponential random variable.
  The sub-exponential norm of $X$, denoted $\norm{X}_{\psi_1}$, 
  is defined to be the smallest $K$ that satisfies the above inequality, 
  or equivalently, we define 
  $$\norm{X}_{\psi_1} = \inf\{s > 0 \mid \mathbb{E}[\exp(|X|/s)] \le 2\}.$$
\end{definition}

\begin{lemma}[{\cite[Exercise 2.7.10]{vershynin2018high}}]\label{lem:subexponential_centering}
  For a sub-exponential random variable $Z$, then $Z-\mathbb{E}[Z]$ is sub-exponential too, and 
  $$\norm{Z-\mathbb{E}[Z]}_{\psi_1} \le \C_3 \norm{Z}_{\psi_1},$$
  where $\C_3$ is an absolute constant. 
\end{lemma}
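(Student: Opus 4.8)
The plan is to reduce the claim to two elementary facts about the $\psi_1$-norm: that it is subadditive (indeed a genuine norm), and that a constant random variable $c\in\R$ has $\norm{c}_{\psi_1} = |c|/\ln 2$. Granting these, one writes $Z-\mathbb{E}[Z]$ as the difference of the sub-exponential variable $Z$ and the constant $\mathbb{E}[Z]$, applies the triangle inequality, and controls $|\mathbb{E}[Z]|$ by $\mathbb{E}|Z|$, which in turn is dominated by $\norm{Z}_{\psi_1}$.

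First I would record the norm of a constant: $\mathbb{E}[\exp(|c|/s)] = \exp(|c|/s)\le 2$ holds exactly when $s\ge |c|/\ln 2$, so $\norm{c}_{\psi_1}=|c|/\ln 2$. Next I would bound the first absolute moment: since $e^t\ge t$ for all real $t$, taking $t=|Z|/\norm{Z}_{\psi_1}$ and then expectations yields $\mathbb{E}|Z|/\norm{Z}_{\psi_1}\le \mathbb{E}[\exp(|Z|/\norm{Z}_{\psi_1})]\le 2$, hence $|\mathbb{E}[Z]|\le\mathbb{E}|Z|\le 2\norm{Z}_{\psi_1}$; in particular $Z$ is integrable and $Z-\mathbb{E}[Z]$ is well defined. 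Combining this with subadditivity gives
$$\norm{Z-\mathbb{E}[Z]}_{\psi_1}\le\norm{Z}_{\psi_1}+\norm{\mathbb{E}[Z]}_{\psi_1}=\norm{Z}_{\psi_1}+\frac{|\mathbb{E}[Z]|}{\ln 2}\le\Big(1+\frac{2}{\ln 2}\Big)\norm{Z}_{\psi_1},$$
so the claim holds with $\C_3 = 1+2/\ln 2 \approx 3.89$.

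The one step that is not completely routine is subadditivity of $\norm{\cdot}_{\psi_1}$, i.e. that $t\mapsto e^{|t|}$ generates an Orlicz norm; if I did not want to cite it, I would argue directly instead. With $K=\norm{Z}_{\psi_1}$ and $s\ge K$, Jensen's inequality applied to the concave map $x\mapsto x^{K/s}$ gives $\mathbb{E}[\exp(|Z|/s)]\le 2^{K/s}$, while $\exp(|\mathbb{E}[Z]|/s)\le\exp(2K/s)$ by the moment bound above; since $|Z-\mathbb{E}[Z]|\le |Z|+|\mathbb{E}[Z]|$, multiplying these estimates and requiring $2^{K/s}\exp(2K/s)\le 2$ forces $s\ge K(2+\ln 2)/\ln 2$, which reproves the bound with the same constant without invoking the triangle inequality. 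So I expect no genuine obstacle here; the only real choice is which of these two bookkeeping routes to present.
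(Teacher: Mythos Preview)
Your argument is correct. The paper does not supply its own proof of this lemma: it is quoted directly from Vershynin's book as an exercise (\cite[Exercise 2.7.10]{vershynin2018high}) and used as a black box, so there is nothing to compare against beyond the standard treatment. Your route via the triangle inequality for the Orlicz norm together with the bound $|\mathbb{E}[Z]|\le\mathbb{E}|Z|\le 2\norm{Z}_{\psi_1}$ is exactly the intended one, and the explicit constant $\C_3=1+2/\ln 2$ you obtain is fine. The alternative direct computation you sketch (Jensen on $x\mapsto x^{K/s}$ and multiplying the two factors) is also valid and yields the same constant, so either presentation works.
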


In the following, we often use the absolute constant $\C$
 defined by $\C = \max (\C_2, \C_3)$.

Sub-Gaussian and sub-exponential distributions are closely related 
as we can see in the following lemma,
which implies that the product of sub-Gaussian random variables is sub-exponential.

\begin{lemma}[{\cite[Lemma 2.7.7]{vershynin2018high}}]\label{lem:productof2subexponential}
  Let $X$ and $Y$ be sub-Gaussian random variables, 
  then $XY$ is sub-exponential. Moreover,
  $$\norm{XY}_{\psi_{1}} \leq\norm{X}_{\psi_{2}}\norm{Y}_{\psi_{2}}.$$
\end{lemma}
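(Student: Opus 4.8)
The plan is to reduce to the normalized case and then combine Young's inequality with the Cauchy--Schwarz inequality. First I would dispose of the degenerate cases: if either $\norm{X}_{\psi_2}=0$ or $\norm{Y}_{\psi_2}=0$, then the corresponding variable is $0$ almost surely, so $XY=0$ and the bound is trivial; and if either norm equals $+\infty$ the claimed inequality is vacuous. Hence I may assume $0<\norm{X}_{\psi_2},\norm{Y}_{\psi_2}<\infty$. Using that $\norm{\cdot}_{\psi_1}$ is positively homogeneous (which is immediate from the $\inf$ description in \cref{def:sub-exponential}: rescaling $X$ by $c$ rescales $\norm{X}_{\psi_1}$ by $|c|$), replacing $X$ by $X/\norm{X}_{\psi_2}$ and $Y$ by $Y/\norm{Y}_{\psi_2}$ divides $\norm{XY}_{\psi_1}$ by exactly $\norm{X}_{\psi_2}\norm{Y}_{\psi_2}$. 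So it suffices to prove that $\norm{X}_{\psi_2}=\norm{Y}_{\psi_2}=1$ implies $\norm{XY}_{\psi_1}\le 1$, i.e. $\mathbb{E}[\exp(|XY|)]\le 2$.

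For that bound I would apply Young's inequality in the symmetric form $ab\le \tfrac12 a^2+\tfrac12 b^2$ with $a=|X|$, $b=|Y|$, giving the pointwise estimate $|XY|\le \tfrac12 X^2+\tfrac12 Y^2$, hence $\exp(|XY|)\le \exp(X^2/2)\exp(Y^2/2)$. Taking expectations and applying the Cauchy--Schwarz inequality,
\[
\mathbb{E}[\exp(|XY|)]\ \le\ \mathbb{E}\!\left[\exp(X^2/2)\exp(Y^2/2)\right]\ \le\ \sqrt{\mathbb{E}[\exp(X^2)]}\;\sqrt{\mathbb{E}[\exp(Y^2)]}.
\]
Since $\norm{X}_{\psi_2}=\norm{Y}_{\psi_2}=1$, \cref{def:sub-Gaussian} gives $\mathbb{E}[\exp(X^2)]\le 2$ and $\mathbb{E}[\exp(Y^2)]\le 2$, so the right-hand side is at most $\sqrt{2}\cdot\sqrt{2}=2$. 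This is precisely $\mathbb{E}[\exp(|XY|)]\le 2$, so by \cref{def:sub-exponential} the product $XY$ is sub-exponential with $\norm{XY}_{\psi_1}\le 1$; undoing the normalization yields $\norm{XY}_{\psi_1}\le \norm{X}_{\psi_2}\norm{Y}_{\psi_2}$, which proves both assertions at once.

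I do not expect a genuine obstacle here: the argument is a two-line application of Young's and Cauchy--Schwarz once the homogeneity reduction is in place. The only points needing a little care are the degenerate cases above and applying the rescaling consistently on both sides of the inequality. One could instead use Young's inequality with a general conjugate pair $(p,q)$ and optimize, but the $\tfrac12/\tfrac12$ split combined with Cauchy--Schwarz is the cleanest route and gives exactly the stated constant $1$.
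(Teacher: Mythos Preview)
Your argument is correct. The paper does not supply its own proof of this lemma; it is quoted from Vershynin's book without proof, so there is nothing to compare against beyond the source. Your normalization-plus-Young-plus-Cauchy--Schwarz approach is in fact exactly the proof given in Vershynin's Lemma~2.7.7, including the constant~$1$, so your write-up matches the intended argument.
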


We also recall Bernstein's inequality for sub-exponential random variables.

\begin{theorem}[Bernstein's inequality, {\cite[Theorem 2.8.1]{vershynin2018high}}] \label{thm:bernstein_ineq}
  Let $X_1,X_2,\dots,X_N$ be independent, mean zero, sub-exponential random variables.
  Then, for every $t\ge 0$, we have
  $$\Prob\left(\left|\sum_{i=1}^{N} X_{i}\right| \geq t\right) \leq 2 \exp \left(-\C_{1} \min \left(\frac{t^{2}}{\sum_{i=1}^N\left\|X_{i}\right\|_{\psi_{1}}^{2}}, \frac{t}{\max_i \left\|X_{i}\right\|_{\psi_{1}}}\right)\right),$$
  where $\C_1$ is an absolute constant.
\end{theorem}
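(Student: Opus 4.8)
The plan is to use the classical Chernoff bounding scheme, and the whole argument rests on a single ingredient: a bound on the moment generating function (MGF) of a mean-zero sub-exponential random variable in a neighborhood of the origin. Concretely, I would first establish that there are absolute constants $c_0,c_1>0$ such that for every mean-zero random variable $X$ with $K:=\norm{X}_{\psi_1}<\infty$,
\[
  \mathbb{E}\!\left[\exp(\lambda X)\right]\le \exp\!\left(c_1\lambda^2K^2\right)\qquad\text{whenever }|\lambda|\le c_0/K,
\]
and then combine this estimate across the $N$ independent summands.

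To prove the MGF bound I would Taylor-expand and use the mean-zero hypothesis to discard the linear term, writing $\mathbb{E}[\exp(\lambda X)]=1+\sum_{p\ge 2}\lambda^p\mathbb{E}[X^p]/p!$. The moments are extracted from the definition of the sub-exponential norm (\cref{def:sub-exponential}): expanding $\mathbb{E}[\exp(|X|/K)]\le 2$ term by term gives $\mathbb{E}[|X|^p]\le 2\,p!\,K^p$, hence $\mathbb{E}[|X|^p]^{1/p}\le C'Kp$ for an absolute constant $C'$ (since $(p!)^{1/p}\le p$). Substituting this together with $p!\ge(p/e)^p$ bounds the $p$-th term of the series by $(C'eK|\lambda|)^p$; for $|\lambda|\le 1/(2C'eK)$ the tail $\sum_{p\ge2}(C'eK|\lambda|)^p$ is a convergent geometric series bounded by $2(C'eK\lambda)^2$, and $1+y\le e^y$ converts $1+2(C'eK\lambda)^2$ into $\exp(c_1\lambda^2K^2)$ with $c_1$ and $c_0$ absolute.

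Given the per-variable estimate, independence yields, for $|\lambda|\le c_0/M$ with $M:=\max_i\norm{X_i}_{\psi_1}$ and $\sigma^2:=\sum_{i=1}^N\norm{X_i}_{\psi_1}^2$,
\[
  \mathbb{E}\!\left[\exp\!\Big(\lambda\sum_{i=1}^N X_i\Big)\right]=\prod_{i=1}^N\mathbb{E}\!\left[\exp(\lambda X_i)\right]\le\exp\!\left(c_1\lambda^2\sigma^2\right),
\]
and Markov's inequality then gives $\Prob\big(\sum_iX_i\ge t\big)\le\exp(-\lambda t+c_1\lambda^2\sigma^2)$ for every $\lambda\in[0,c_0/M]$. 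I would then optimize in $\lambda$ by taking $\lambda=\min\{t/(2c_1\sigma^2),\,c_0/M\}$: when the first term is selected the exponent equals $-t^2/(4c_1\sigma^2)$, and when the second is selected (so that $t>2c_0c_1\sigma^2/M$, which bounds the cross term) it is at most $-c_0t/(2M)$. Since $\min\{\cdot,\cdot\}$ is bounded above by each of its arguments, both cases are captured by $\exp\big(-\C_1\min\{t^2/\sigma^2,\,t/M\}\big)$ for a suitable absolute constant $\C_1$. Running the same reasoning on $-\sum_iX_i$ and adding the two one-sided bounds produces the factor $2$ and the stated two-sided inequality.

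The only genuinely delicate step is the MGF estimate, and within it the restriction $|\lambda|\le c_0/K$: a sub-exponential tail is too heavy for the MGF to be finite for all $\lambda$, so a globally Gaussian-type bound is impossible. That restriction is precisely what produces the two-regime ($\min$) structure of the conclusion --- a sub-Gaussian regime near $t=0$ governed by the ``variance proxy'' $\sigma^2$, and an exponential regime for large $t$ governed by the single heaviest summand through $M$. Everything downstream of the MGF bound (the product over independent terms, the scalar optimization in $\lambda$, and the symmetrization) is routine.
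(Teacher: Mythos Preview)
Your argument is correct and is essentially the standard Chernoff-method proof of Bernstein's inequality (the same one given in the cited reference \cite[Theorem~2.8.1]{vershynin2018high}). Note, however, that the paper does not supply its own proof of this statement: it is quoted in the preliminaries with a citation and used as a black box, so there is no ``paper's proof'' to compare against beyond the reference you have effectively reproduced.
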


\subsection{Definitions of \texorpdfstring{$\eps$}--net and estimation of the operator norm of a matrix}
Next, we recall the definition of $\eps$-net.

\begin{definition}\label{def:eps-net}
  Consider a subset $K \subset \R^n$ and let $\eps>0$. 
  A subset $\mathcal N\subseteq K$ is called an $\eps$-net of $K$ 
  if every point in $K$ is within distance $\eps$ of some point of $\mathcal N$, i.e.
  $$^\forall x \in K, ^\exists y \in \mathcal N, \norm{x-y} \le \eps.$$
\end{definition}

\begin{lemma}[{\cite[Corollary 4.2.13]{vershynin2018high}}]\label{remark:exists_eps_net}
  There exists a $\eps$-net with size $\ds\left(\frac 2 \eps +1\right)^n$
  of the unit $n$-Euclidean ball. 
\end{lemma}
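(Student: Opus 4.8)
The plan is to use the classical volumetric packing argument. First I would construct a \emph{maximal} $\eps$-separated subset $\mathcal N$ of the unit ball $K = \{x \in \R^n \mid \norm{x} \le 1\}$, that is, a set whose points are pairwise at distance strictly greater than $\eps$ and which cannot be enlarged while retaining this property. Such a set exists: one can build it greedily, or invoke Zorn's lemma, and it will turn out to be finite thanks to the cardinality bound below.

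Next I would observe that any maximal $\eps$-separated set $\mathcal N$ is automatically an $\eps$-net of $K$ in the sense of \cref{def:eps-net}. Indeed, if some $x \in K$ satisfied $\norm{x-y} > \eps$ for every $y \in \mathcal N$, then $\mathcal N \cup \{x\}$ would still be $\eps$-separated, contradicting maximality; hence every point of $K$ lies within distance $\eps$ of some point of $\mathcal N$.

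The only step involving a computation is bounding $|\mathcal N|$. Since the points of $\mathcal N$ are pairwise more than $\eps$ apart, the open balls of radius $\eps/2$ centered at these points are pairwise disjoint, and each is contained in the ball $(1+\eps/2)K$ of radius $1+\eps/2$, its center being in $K$. Comparing Lebesgue volumes, and using that the volume of a Euclidean ball of radius $r$ in $\R^n$ is proportional to $r^n$, gives
$$|\mathcal N|\left(\frac{\eps}{2}\right)^n \mathrm{vol}(K) \;\le\; \left(1+\frac{\eps}{2}\right)^n \mathrm{vol}(K),$$
so that $|\mathcal N| \le \left(\frac{1+\eps/2}{\eps/2}\right)^n = \left(\frac 2\eps + 1\right)^n$, which is the asserted bound.

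I do not expect a genuine obstacle here; the only points deserving care are justifying the existence of a maximal $\eps$-separated set (which is immediate from the finiteness that the volume bound itself provides) and getting the radius $1+\eps/2$ of the enlarged ball right in the inclusion of the small balls. As a minor remark, the statement as written holds for every $\eps>0$; if one only needs it for $\eps \le 1$ the bound can be loosened to the cleaner form $(3/\eps)^n$.
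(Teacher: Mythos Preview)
Your argument is correct and is precisely the standard volumetric packing argument. The paper does not supply its own proof of this lemma; it simply cites \cite[Corollary 4.2.13]{vershynin2018high}, where exactly the argument you outline (maximal $\eps$-separated set plus volume comparison of disjoint $\eps/2$-balls inside the enlarged ball of radius $1+\eps/2$) is given. So there is nothing to compare: your proposal reproduces the proof from the cited source.
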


$\eps$-nets can help us estimate the operator norm of a matrix.

\begin{lemma}[{\cite[Lemma 4.4.1, Exercise 4.4.3]{vershynin2018high}}]\label{lem:eps_net2operator_norm}
  Let $A$ be an $m\times n$ matrix and $\eps \in [0, 1)$. 
  Then, for any $\eps$-net $\mathcal N$ of the unit sphere $S^{n-1}$, we have
  $$\sup_{x\in\mathcal N} \norm{Ax} \le \norm{A} \le \frac{1}{1-\eps}\cdot\sup_{x\in\mathcal N} \norm{Ax}.$$

  Moreover, if $m=n$ and $A$ is symmetric, we have
  $$\sup_{x\in\mathcal N} |x^\T Ax| \le \norm{A} \le \frac{1}{1-2\eps}\cdot\sup_{x\in\mathcal N} |x^\T Ax|.$$
\end{lemma}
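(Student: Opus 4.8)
The plan is to establish both two-sided inequalities by the standard covering argument: the left-hand bounds are immediate because $\mathcal N$ lies on the unit sphere, while the right-hand bounds follow by approximating an arbitrary unit vector by a net point and then absorbing the resulting error term---which will be a multiple of $\norm A$ itself---back into the left-hand side. The left-hand inequalities are trivial: since $\mathcal N \subseteq S^{n-1}$, every $x \in \mathcal N$ satisfies $\norm{Ax} \le \sup_{\norm y = 1}\norm{Ay} = \norm A$, hence $\sup_{x\in\mathcal N}\norm{Ax} \le \norm A$; and for symmetric $A$ the spectral theorem gives $\norm A = \max_i|\lambda_i(A)| = \sup_{\norm y=1}|y^\T Ay|$, so likewise $\sup_{x\in\mathcal N}|x^\T Ax| \le \norm A$.

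For the right-hand inequality in the general case, I would fix an arbitrary $x \in S^{n-1}$ and use the $\eps$-net property to pick $y \in \mathcal N$ with $\norm{x-y}\le\eps$. The triangle inequality and the definition of the operator norm give
$$\norm{Ax} \le \norm{Ay} + \norm{A(x-y)} \le \sup_{z\in\mathcal N}\norm{Az} + \eps\norm A.$$
Taking the supremum over $x\in S^{n-1}$ yields $\norm A \le \sup_{z\in\mathcal N}\norm{Az} + \eps\norm A$, and since $\eps<1$ this rearranges to $\norm A \le (1-\eps)^{-1}\sup_{z\in\mathcal N}\norm{Az}$.

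For the symmetric case, with $A=A^\T$, I would again fix $x\in S^{n-1}$ and $y\in\mathcal N$ with $\norm{x-y}\le\eps$, and split the quadratic-form difference bilinearly:
$$x^\T Ax - y^\T Ay = x^\T A(x-y) + (x-y)^\T Ay.$$
Using $\norm x=\norm y=1$ this gives $|x^\T Ax - y^\T Ay| \le \norm A\norm{x-y} + \norm A\norm{x-y} \le 2\eps\norm A$, hence $|x^\T Ax| \le \sup_{z\in\mathcal N}|z^\T Az| + 2\eps\norm A$. Taking the supremum over $x$ and invoking $\norm A = \sup_{\norm y=1}|y^\T Ay|$ yields $(1-2\eps)\norm A \le \sup_{z\in\mathcal N}|z^\T Az|$, which is the stated bound (nontrivial precisely when $\eps<1/2$).

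The argument is essentially routine, so there is no substantive obstacle; the only steps needing a little care are, in the symmetric case, the use of the identity $\norm A=\sup_{\norm y=1}|y^\T Ay|$---which genuinely relies on symmetry---and the bilinear splitting that accounts for the factor $2\eps$ in place of $\eps$.
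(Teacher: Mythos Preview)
Your argument is correct and is exactly the standard covering argument from Vershynin's book; the paper itself does not give a proof of this lemma but simply cites it as \cite[Lemma 4.4.1, Exercise 4.4.3]{vershynin2018high}, so there is nothing to compare against beyond noting that your proof matches the cited source.
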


\subsection{Properties of random projections}
Now we recall basic properties of random projection matrices.
In this paper we call a matrix $P \in \R^{d\times n}$ a random projection matrix or a random matrix when its entries $P_{ij}$ are independently sampled from $\mathrm N (0, 1/d)$.

One of the most important features of a random projection defined by a random matrix 
is that it nearly
preserves the norm of any given vector with arbitrary high probability.
The following lemma is known as a variant of the Johnson-Lindenstrauss lemma  (\cite{jllemma}).

\begin{lemma}[{\cite[Lemma 5.3.2, Exercise 5.3.3]{vershynin2018high}}] 
  \label{lem:JLL_random_matrix}
  Let $P\in \R^{d\times n}$ be a random matrix
  whose entries $P_{ij}$ are independently drawn from $\mathrm N(0,1/d)$.

  Then for any $x\in\R^n$ and $\eps\in(0,1)$, we have
  $${\rm Prob\ }[(1-\eps)\norm{x}^2 \le \norm{Px}^2 \le (1+\eps)\norm{x}^2]
    \ge 1-2\exp(-\mathcal C_0 \eps^2 d), $$
  where $\mathcal C_0$ is an absolute constant.
\end{lemma}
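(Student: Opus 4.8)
The plan is to reduce the statement to a concentration bound for a chi-squared--type sum and then invoke Bernstein's inequality (\cref{thm:bernstein_ineq}). Since the asserted inequality is homogeneous of degree two in $x$ and holds trivially for $x=0$, I would first normalize and assume $\norm{x}=1$. For a unit vector $x$, each coordinate $(Px)_i=\sum_{j=1}^n P_{ij}x_j$ is a linear combination of independent $\mathrm N(0,1/d)$ variables, hence $(Px)_i\sim\mathrm N(0,1/d)$, and the $d$ coordinates are mutually independent since they involve disjoint rows of $P$. Setting $Z_i=\sqrt d\,(Px)_i$, the $Z_i$ are i.i.d.\ standard normal and $\norm{Px}^2=\frac1d\sum_{i=1}^d Z_i^2$, so the event in the lemma is exactly $\bigl\{\,\bigl|\sum_{i=1}^d(Z_i^2-1)\bigr|\le\eps d\,\bigr\}$, and it suffices to bound the probability of its complement.

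Next I would check that the summands $X_i:=Z_i^2-1$ meet the hypotheses of \cref{thm:bernstein_ineq}: they are independent and mean zero, and they are sub-exponential with a norm bounded by an absolute constant. Indeed, $Z_i$ is sub-Gaussian by \cref{remark:gaussianissubgaussian}, so $Z_i^2$ is sub-exponential with $\norm{Z_i^2}_{\psi_1}\le\norm{Z_i}_{\psi_2}^2$ by \cref{lem:productof2subexponential}, and then $X_i=Z_i^2-\mathbb E[Z_i^2]$ is sub-exponential with $\norm{X_i}_{\psi_1}\le K$ for some absolute constant $K$ (identical for all $i$) by \cref{lem:subexponential_centering}. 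Applying \cref{thm:bernstein_ineq} with $t=\eps d$, and using $\sum_{i=1}^d\norm{X_i}_{\psi_1}^2\le K^2 d$ together with $\max_i\norm{X_i}_{\psi_1}\le K$, yields
\[
\Prob\!\left(\Bigl|\sum_{i=1}^d X_i\Bigr|\ge\eps d\right)\le 2\exp\!\left(-\C_1\min\!\Bigl(\frac{\eps^2 d}{K^2},\frac{\eps d}{K}\Bigr)\right).
\]

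Finally I would collapse the minimum: for $\eps\in(0,1)$ one has $\eps d/K\ge\eps^2 d/K$, hence $\min(\eps^2 d/K^2,\eps d/K)\ge\min(K^{-2},K^{-1})\,\eps^2 d$, and absorbing $K$ and $\C_1$ into a single absolute constant $\mathcal C_0$ turns the right-hand side into $2\exp(-\mathcal C_0\eps^2 d)$. Taking complements and, when $\norm{x}\neq 1$, rescaling both sides by $\norm{x}^2$ then gives the claim. The one place that needs genuine care is this last step --- discarding the linear term in Bernstein's inequality in favour of the quadratic one --- which is precisely where the hypothesis $\eps<1$ is used; everything else is routine bookkeeping with the sub-exponential norm estimates recorded above.
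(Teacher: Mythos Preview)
The paper does not supply its own proof of this lemma; it simply cites \cite[Lemma 5.3.2, Exercise 5.3.3]{vershynin2018high}. Your argument is correct and is essentially the standard proof from that reference: reduce to a unit vector, recognize $\norm{Px}^2$ as an average of $d$ i.i.d.\ $\chi^2_1$ variables, bound the sub-exponential norm of each centered summand via \cref{remark:gaussianissubgaussian}, \cref{lem:productof2subexponential}, and \cref{lem:subexponential_centering}, and apply Bernstein's inequality (\cref{thm:bernstein_ineq}). Your handling of the minimum at the end---using $\eps<1$ to dominate the linear term by the quadratic one up to a constant---is the right way to collapse the Bernstein bound into the stated form.
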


Random projections also approximately preserve inner products, linear function values and quadratic function values.

\begin{lemma}[{\cite[Lemma 3.1, 3.2, 3.3]{d2019random}}]\label{lem:basic_properties_of_random_matrix}
  Let $P\in \R^{d\times n}$ be a random matrix
  whose entries $P_{ij}$ are independently sampled from $\mathrm N(0,1/d)$.
  Then for any $x,y\in\R^n$, $A\in \R^{m\times n}$ having unit row vectors, $Q\in \R^{n\times n}$ and $\eps\in(0,1)$, 
  the following probabilistic inequalities hold.
  \begin{enumerate}
    \renewcommand{\labelenumi}{(\roman{enumi})}
    \item With probability at least $1-4\exp(-\C_0 \eps^2 d)$, we have
    $$x^\T y -\eps\norm{x}\norm{y}  \le x^\T P^\T Py \le x^\T y + \eps\norm{x}\norm{y}.$$

    \item With probability at least $1-4m\exp(-\C_0 \eps^2 d)$,
    $$Ax-\eps \norm{x} \bm 1 \le  AP^\T P x \le Ax+\eps \norm{x} \bm 1.$$

    \item With probability at least $1-8\operatorname{rank} Q \cdot\exp(-\C_0 \eps^2 d)$,
    $$x^\T Qx - 3\eps \norm{x}^2 \norm{Q}_F \le 
      x^\T P^\T PQP^\T Px  \le x^\T Qx + 3\eps \norm{x}^2 \norm{Q}_F.$$
  \end{enumerate}
\end{lemma}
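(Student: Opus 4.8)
The plan is to derive all three parts from the single-vector Johnson--Lindenstrauss estimate of \cref{lem:JLL_random_matrix}. For part (i) I would first reduce to $\norm{x}=\norm{y}=1$ by homogeneity (the inequality is invariant under $x\mapsto\alpha x$, $y\mapsto\beta y$ with $\alpha,\beta>0$, and it is trivial if $x$ or $y$ vanishes), then use the polarization identity $4\langle Px,Py\rangle=\norm{P(x+y)}^2-\norm{P(x-y)}^2$ and apply \cref{lem:JLL_random_matrix} to the two fixed vectors $x+y$ and $x-y$. Each of these two norms is distorted by at most a factor $1\pm\eps$ except on an event of probability $2\exp(-\C_0\eps^2 d)$, so a union bound gives the stated $4\exp(-\C_0\eps^2 d)$; on the good event the parallelogram law $\norm{x+y}^2+\norm{x-y}^2=2\norm{x}^2+2\norm{y}^2=4$ turns the two-sided norm estimate into $|\langle Px,Py\rangle-\langle x,y\rangle|\le\eps$, and rescaling restores the factor $\norm{x}\norm{y}$.

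Part (ii) is then immediate. Writing $a_1^\T,\dots,a_m^\T$ for the unit rows of $A$, the $i$-th coordinate of $AP^\T Px$ is $a_i^\T P^\T Px=\langle Pa_i,Px\rangle$, to which part (i) applies with the pair $(a_i,x)$, giving $|a_i^\T P^\T Px-a_i^\T x|\le\eps\norm{x}$ off an event of probability $4\exp(-\C_0\eps^2 d)$; a union bound over the $m$ rows produces the failure probability $4m\exp(-\C_0\eps^2 d)$ and the coordinatewise bounds assemble into the asserted vector inequality.

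For part (iii) I would again normalize to $\norm{x}=1$ and take the spectral decomposition $Q=\sum_{k=1}^r\lambda_k v_kv_k^\T$ with $\{v_k\}$ orthonormal and $r=\operatorname{rank}Q$, so that $\norm{Q}_F^2=\sum_k\lambda_k^2$. Since $PQP^\T=\sum_k\lambda_k(Pv_k)(Pv_k)^\T$, the quantity to control is $\sum_{k=1}^r\lambda_k\bigl(\langle Px,Pv_k\rangle^2-\langle x,v_k\rangle^2\bigr)$. Applying part (i) to each of the $r$ pairs $(v_k,x)$ and union-bounding costs at most $4r\exp(-\C_0\eps^2 d)\le 8\operatorname{rank}Q\cdot\exp(-\C_0\eps^2 d)$, exactly the stated probability; on that event $\langle Px,Pv_k\rangle=\langle x,v_k\rangle+\beta_k$ with $|\beta_k|\le\eps$, so each summand equals $2\langle x,v_k\rangle\beta_k+\beta_k^2$. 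The first-order part is handled by the Cauchy--Schwarz inequality together with $\sum_k\langle x,v_k\rangle^2\le\norm{x}^2=1$, giving $2\eps\norm{Q}_F$, while the second-order part contributes $\eps^2\sum_k|\lambda_k|$; collecting these and reinstating the factor $\norm{x}^2$ yields $3\eps\norm{x}^2\norm{Q}_F$.

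The routine steps are parts (i) and (ii); the real work is in part (iii). The delicate point is the bookkeeping that keeps the constant equal to $3$: one has to split each summand into its first- and second-order pieces rather than bound $|\langle Px,Pv_k\rangle^2-\langle x,v_k\rangle^2|$ crudely, and one has to absorb the second-order term $\eps^2\sum_k|\lambda_k|$ into $\eps\norm{Q}_F$, which is where it matters that $\eps$ is small (equivalently, $d$ large) — this is the implicit regime in which the lemma is used later. One should also note that it suffices to union-bound over the $\operatorname{rank}Q$ eigenvectors of $Q$, not over a full $\eps$-net of the sphere, which is precisely what keeps the $\operatorname{rank}Q$ factor in the probability from degrading to something exponential in $n$.
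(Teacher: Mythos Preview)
The paper does not itself prove this lemma; it quotes it from \cite{d2019random}, so there is no in-paper argument to compare against. Your derivations of (i) and (ii) are the standard polarization-plus-union-bound arguments and are correct.

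For (iii) there is a genuine gap at exactly the step you flag. With $\norm{x}=1$ your decomposition gives
\[
\Bigl|\sum_{k=1}^{r}\lambda_k\bigl(\langle Px,Pv_k\rangle^{2}-\langle x,v_k\rangle^{2}\bigr)\Bigr|
\;\le\;2\eps\norm{Q}_F+\eps^{2}\sum_{k=1}^{r}|\lambda_k|,
\]
and to reach $3\eps\norm{Q}_F$ you must have $\eps\sum_k|\lambda_k|\le\norm{Q}_F$. But $\sum_k|\lambda_k|$ is the nuclear norm of $Q$, and the ratio $\norm{Q}_F\big/\sum_k|\lambda_k|$ can be as small as $1/\sqrt{\operatorname{rank}Q}$; for instance if all eigenvalues of $Q$ are $\pm1$ then $\sum_k|\lambda_k|=n$ while $\norm{Q}_F=\sqrt n$, and the absorption fails for every $\eps>1/\sqrt n$. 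The lemma is stated for all $\eps\in(0,1)$, so appealing to ``the implicit regime in which the lemma is used later'' does not prove it as stated. Either an explicit restriction on $\eps$ is required, or one needs a sharper bound on $\sum_k\beta_k^2$ than the crude $r\eps^2$. Note, too, that your union bound consumes only $4\operatorname{rank}Q$ bad events while the stated failure probability budgets $8\operatorname{rank}Q$; the unused slack is a hint that the argument in \cite{d2019random} conditions on further JL events beyond the $r$ inner-product estimates you invoke.
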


The above lemma, which estimates the error induced by random projections on different values, 
will be used to bound the error on the optimal value of a randomly projected quadratic optimization problem.

\section{Convexified randomly projected problem}\label{sec:CRP}
\subsection{Convexifying the objective function}
In this section we give an error bound between $\P$ and $\CRP$.
First we consider the distribution of the eigenvalues of $\bar Q = PQP^\T$. 
We can easily confirm that $\mathbb{E}[PQP ^\T] = \frac{\tr Q}{d} I_d$:
\begin{align*}
  \mathbb{E}[(PQP ^\T)_{ij}] =& \mathbb{E}\left[\sum_{k}\sum_{l} P_{ik}Q_{kl}P_{jl}\right]\\
                     =& \sum_{k}\sum_{l} Q_{kl} \mathbb{E} \left[ P_{ik}P_{jl}\right]\\
                     =& \sum_{k}\sum_{l} Q_{kl} \frac{\delta_{ij}\delta_{kl}}{d}\\
                     =& \frac{\tr Q}{d} \delta_{ij},
\end{align*}
where $\delta_{ij}$ denotes the Kronecker delta symbol.
By the above equality, we expect the eigenvalues of $PQP^\T$ to be distributed around $\frac{\tr Q}{d}.$
One example of eigenvalue distributions of $Q$ and $PQP^\T$ is shown in \cref{fig:eigenvalues_Q_PQP}, 
where we observe that the eigenvalue distribution of $PQP^\T$ is skewed towards positive values and the negative spectrum of $PQP^\T$ is negligible. 
In the next lemma, we evaluate the maximum deviation between the eigenvalues of $PQP^\T$  and $\frac{\tr Q}{d}$.
\begin{figure}[htb]
  \begin{center} 
    \includegraphics[width=0.8\linewidth]{./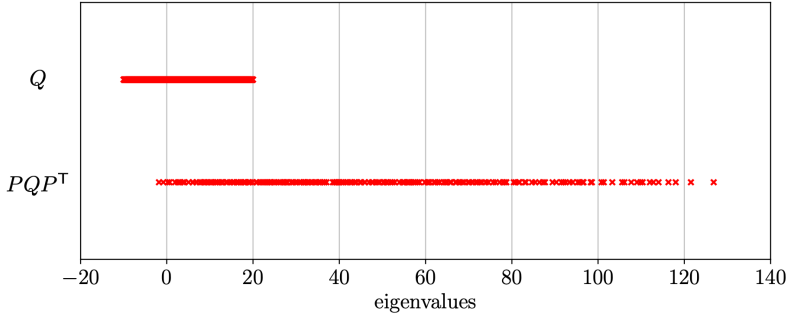}
     \caption{Distributions of eigenvalues of $Q$ and $PQP^\T$($Q\in\R^{2000\times 2000}$ with eigenvalues $\lambda_i(Q) = -10 + \frac{30i}{2000}$, $P\in\R^{200\times 2000}$).} 
    \label{fig:eigenvalues_Q_PQP} 
  \end{center}
\end{figure}

\begin{lemma}\label{lem:concentration_of_PQP}
  Let $Q$ be an $n\times n$ symmetric matrix and let
  $P$ be a $d\times n$ random matrix whose entries are sampled from $\mathrm N (0, 1/d)$.
  Then, for every $t\ge 0$, 
  $$\operatorname{Prob}\left[\norm{PQP^\T-\frac{\tr Q}{d} I_d} \ge t\right] 
  \le 2\cdot 9^d \exp \left(-\C_{1} \min \left(\frac{d^2t^2}{4\C^6 \norm{Q}_F^2}, \frac{dt}{2\C^3 \norm Q} \right)\right).$$
\end{lemma}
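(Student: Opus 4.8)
The plan is to combine an $\eps$-net discretization of the operator norm with Bernstein's inequality applied to a single (diagonalized) Gaussian quadratic form. Set $M := PQP^\T - \frac{\tr Q}{d}I_d$, which is a symmetric $d\times d$ matrix. By \cref{lem:eps_net2operator_norm} with $\eps = 1/4$, for any $1/4$-net $\mathcal N$ of $S^{d-1}$ we have $\norm{M} \le 2\sup_{x\in\mathcal N}|x^\T M x|$, and by \cref{remark:exists_eps_net} we may take $|\mathcal N| \le 9^d$. A union bound then reduces the statement to the single-vector estimate: for every fixed unit vector $x \in \R^d$,
$$\Prob\left[\left|x^\T PQP^\T x - \frac{\tr Q}{d}\right| \ge \frac t2\right] \le 2\exp\left(-\C_1\min\left(\frac{d^2t^2}{4\C^6\norm{Q}_F^2},\ \frac{dt}{2\C^3\norm{Q}}\right)\right),$$
since multiplying through by $9^d$ gives precisely the claimed bound.

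To prove this single-vector estimate, first I would identify the distribution of $y := P^\T x$ for fixed unit $x$: each coordinate $y_j = \sum_i P_{ij}x_i$ is a linear combination of independent $\mathrm N(0,1/d)$ variables with total variance $\norm{x}^2/d = 1/d$, and distinct coordinates involve disjoint entries of $P$, so $y \sim \mathrm N(0,\tfrac1d I_n)$, i.e.\ $y = \tfrac1{\sqrt d}g$ with $g \sim \mathrm N(0,I_n)$. Hence $x^\T PQP^\T x = y^\T Q y = \tfrac1d g^\T Q g$. Writing the spectral decomposition $Q = U\diag(\lambda_1,\dots,\lambda_n)U^\T$ and using the rotational invariance of the standard Gaussian, $g^\T Q g$ has the same law as $\sum_{k=1}^n\lambda_k h_k^2$ with $h \sim \mathrm N(0,I_n)$; since $\tr Q = \sum_k\lambda_k$, this gives
$$x^\T PQP^\T x - \frac{\tr Q}{d} \ \overset{d}{=}\ \frac1d\sum_{k=1}^n\lambda_k(h_k^2-1),$$
a sum of $n$ independent, mean-zero random variables.

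Each summand is sub-exponential with a controlled norm: \cref{remark:gaussianissubgaussian} gives $\norm{h_k}_{\psi_2}\le\C$, \cref{lem:productof2subexponential} then gives $\norm{h_k^2}_{\psi_1}\le\C^2$, and \cref{lem:subexponential_centering} gives $\norm{h_k^2-1}_{\psi_1}\le\C^3$, so $\norm{\lambda_k(h_k^2-1)}_{\psi_1}\le\C^3|\lambda_k|$. Applying \cref{thm:bernstein_ineq} to $X_k = \lambda_k(h_k^2-1)$ with threshold $dt/2$ (after multiplying the previous display by $d$), and using $\sum_k\C^6\lambda_k^2 = \C^6\norm{Q}_F^2$ together with $\max_k\C^3|\lambda_k| = \C^3\norm{Q}$ (the operator norm of a symmetric matrix equals its largest eigenvalue in absolute value), yields exactly the single-vector estimate above. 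Combining it with the union bound over $\mathcal N$ finishes the proof.

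The argument is mostly bookkeeping, so there is no single hard obstacle; the two points that need care are (i) recognizing that $P^\T x$ is an honest $\mathrm N(0,\tfrac1d I_n)$ vector, which converts the random quadratic form in $P$ into a fixed quadratic form in a Gaussian vector, and (ii) diagonalizing $Q$ before invoking Bernstein's inequality, since \cref{thm:bernstein_ineq} requires a sum of independent scalars rather than a generic quadratic form. Carrying the constants $\C$, $\C^2$, $\C^3$, $\C^6$ faithfully through \cref{remark:gaussianissubgaussian,lem:productof2subexponential,lem:subexponential_centering} is what produces the exact exponent, and choosing $\eps = 1/4$ in the net argument is what produces the base $9$ (and the harmless factor $2$ in front).
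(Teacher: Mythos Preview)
Your proposal is correct and follows essentially the same approach as the paper's proof: both reduce the operator norm to a $1/4$-net of size $9^d$, use Gaussian rotational invariance to diagonalize $Q$, and apply Bernstein's inequality to the resulting sum $\sum_j \lambda_j(Z_j^2-\mathbb{E}Z_j^2)$ with the same chain of sub-exponential norm bounds yielding $\C^3|\lambda_j|/d$. The only cosmetic difference is ordering---the paper diagonalizes $Q$ once up front (using $PU\overset{d}{=}P$) and then reads off the columns $\langle P^j,x\rangle$ as i.i.d.\ $\mathrm N(0,1/d)$, whereas you first identify $P^\T x\sim\mathrm N(0,\tfrac1d I_n)$ and diagonalize afterward; the resulting Bernstein computation and constants are identical.
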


\begin{proof}
  First, using the eigenvalue decomposition of $Q$, we write
  $$Q = U \Lambda U^T,$$
  where $U$ is an orthogonal matrix and $\Lambda = \diag(\lambda_1,\dots, \lambda_n)$. Since the distribution of $PU$ is the same as the distribution of $P$, we have that the distribution of $PQP^\T$ is the same as the distribution of $P\Lambda P^\T$, hence
  $$\operatorname{Prob}\left[\norm{PQP^\T-\frac{\tr Q}{d} I_d} \ge t\right]
  =\operatorname{Prob}\left[\norm{P\Lambda P^\T-\frac{\tr Q}{d} I_d} \ge t\right].$$

  By  \cref{remark:exists_eps_net}, we can take a $1/4$-net $\mathcal N$ with a size of $9^n$
  of the unit $n$-Euclidean ball.
  Using  \cref{lem:eps_net2operator_norm} with $\eps=1/4$, we get
  \begin{align}
    &    \operatorname{Prob}\left[\norm{P\Lambda P^\T-\frac{\tr Q}{d} I_d} \ge t\right] \label{eq:concentration_of_PQP_eq1}\\
    \le& \operatorname{Prob}\left[2\sup_{x\in \mathcal N} \left|x^\T \left(P\Lambda P^\T-\frac{\tr Q}{d}I_d\right) x\right|  \ge t\right] \nonumber \\
    =&   \operatorname{Prob}\left[\sup_{x\in \mathcal N} \left|x^\T P\Lambda P^\T x - \frac{\tr Q}{d}\right|  \ge \frac t 2\right] \nonumber\\
    =&   \operatorname{Prob}\left[\sup_{x\in \mathcal N} \left|\sum_{j=1}^n\left(\lambda_j \tliner{P^j}{x}^2 - \frac{\lambda_j}{d}\right)\right|  \ge \frac t 2\right], \nonumber
  \end{align} 
  where the $P^j$ are the column vectors of $P$. 
  
  Let $X_j = \tliner{P^j}{x}$, then $X_j$ are independent Gaussian random variables of variances $1/d$.
  Thus, by \cref{remark:gaussianissubgaussian}, we obtain
  $$\norm{X_j}_{\psi_2} \le \C \sqrt\frac{1}{d}.$$
  With the above inequality,  \cref{lem:subexponential_centering} and  \cref{lem:productof2subexponential}, the random variable $\lambda_j X_j^2 - \dfrac{\lambda_j}{d}$ turns out to be sub-exponential whose sub-exponential norm is bounded by 
  $$\norm{\lambda_j X_j^2 - \dfrac{\lambda_j}{d}}_{\psi_1}
  \le \C\norm{\lambda_j X_j^2}_{\psi_1} 
  = \C|\lambda_j| \norm{X_j^2}_{\psi_1}
  \le \C|\lambda_j| \norm{X_j}_{\psi_2}^2
  \le \C^3 \frac{|\lambda_j|}{d}.$$
By Bernstein's inequality (\cref{thm:bernstein_ineq}),
  we obtain, for each $x\in\mathcal N$, the following inequality :
  \begin{align}
    &\operatorname{Prob}\left(\left|\sum_{j=1}^{n} \left(\lambda_j X_j^2 - \dfrac{\lambda_j}{d}\right)\right| \geq \frac t 2\right) \label{eq:concentration_of_PQP_eq2}\\
    \le& 2 \exp \left(-\C_{1} \min \left(\frac{\dfrac{t^2}{4}}{\sum_{j=1}^n \left(\C^3 \frac{|\lambda_j|}{d}\right)^2}, \frac{\dfrac{t}{2}}{\max_j \left(\C^3 \frac{|\lambda_j|}{d}\right)} \right)\right)\nonumber\\
    =& 2 \exp \left(-\C_{1} \min \left(\frac{d^2t^2}{4\C^6 \norm{Q}_F^2}, \frac{dt}{2\C^3 \norm Q} \right)\right). \nonumber
  \end{align}  
  Finally, 
  from \cref{eq:concentration_of_PQP_eq1}, \cref{eq:concentration_of_PQP_eq2} and a union bound on $\mathcal N$, we have 
  $$\Prob\left[\norm{PQP^\T-\frac{\tr Q}{d} I_d} \ge t\right] 
  \le 2\cdot 9^d \exp \left(-\C_{1} \min \left(\frac{d^2t^2}{4\C^6 \norm{Q}_F^2}, \frac{dt}{2\C^3 \norm Q} \right)\right),$$
  which completes the proof.
\end{proof}

\begin{corollary}\label{cor:neg_spec_of_PQP}
  Let $Q$ be an $n\times n$ symmetric matrix and let
  $P$ be a $d\times n$ random matrix whose entries are sampled from $\mathrm N (0, 1/d)$.
  If $\tr Q >0$, then for any $\eps>0$, we have
  \begin{align*}
    &\operatorname{Prob}\left[\left|\min(0, \lambda_{\min}(PQP^\T)) \right| \ge \eps \norm{Q}_F\right] \\
    &\qquad\le 
    2\cdot 9^d \exp \left(-\C_{1} \min \left(\frac{\left(\tr Q + \eps d\norm{Q}_F\right)^2}{4\C^6 \norm{Q}_F^2}, \frac{\tr Q + \eps d\norm{Q}_F}{2\C^3 \norm Q} \right)\right),
  \end{align*}
  where $\lambda_{\min}(M)$ denotes the minimum eigenvalue of a matrix $M$.
\end{corollary}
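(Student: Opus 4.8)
The plan is to derive \cref{cor:neg_spec_of_PQP} as a direct consequence of \cref{lem:concentration_of_PQP}. The key observation is that if $\|PQP^\T - \frac{\tr Q}{d} I_d\| < t$ for a suitable $t$, then every eigenvalue $\lambda$ of $PQP^\T$ satisfies $|\lambda - \frac{\tr Q}{d}| < t$; in particular $\lambda_{\min}(PQP^\T) > \frac{\tr Q}{d} - t$. So if we choose $t$ small enough that $\frac{\tr Q}{d} - t \ge -\eps\|Q\|_F$, i.e. $t \le \frac{\tr Q}{d} + \eps\|Q\|_F$, then the event $|\min(0,\lambda_{\min}(PQP^\T))| \ge \eps\|Q\|_F$ cannot occur. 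Equivalently, the event in question is contained in the event $\|PQP^\T - \frac{\tr Q}{d} I_d\| \ge t$ for that choice of $t$.

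Concretely, I would set $t = \frac{\tr Q}{d} + \eps\|Q\|_F$, which is positive since $\tr Q > 0$ and $\eps > 0$. First I would argue the inclusion of events: suppose $|\min(0,\lambda_{\min}(PQP^\T))| \ge \eps\|Q\|_F$. This forces $\lambda_{\min}(PQP^\T) < 0$ (otherwise the left side is $0$), and then $\lambda_{\min}(PQP^\T) \le -\eps\|Q\|_F$. Hence $\frac{\tr Q}{d} - \lambda_{\min}(PQP^\T) \ge \frac{\tr Q}{d} + \eps\|Q\|_F = t$, and since $\frac{\tr Q}{d} - \lambda_{\min}(PQP^\T)$ is one of the eigenvalues of $\frac{\tr Q}{d}I_d - PQP^\T$ (in absolute value bounded by the operator norm), we get $\|PQP^\T - \frac{\tr Q}{d} I_d\| \ge t$. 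Then apply \cref{lem:concentration_of_PQP} with this value of $t$, and substitute $\frac{d^2 t^2}{4\C^6\|Q\|_F^2} = \frac{d^2(\frac{\tr Q}{d} + \eps\|Q\|_F)^2}{4\C^6\|Q\|_F^2} = \frac{(\tr Q + \eps d\|Q\|_F)^2}{4\C^6\|Q\|_F^2}$ and likewise $\frac{dt}{2\C^3\|Q\|} = \frac{\tr Q + \eps d\|Q\|_F}{2\C^3\|Q\|}$, which reproduces exactly the claimed bound.

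There is essentially no hard part here; the statement is a repackaging of \cref{lem:concentration_of_PQP}. The only point requiring a little care is the event-inclusion step — making sure the sign analysis of $\min(0,\lambda_{\min})$ is handled correctly and that one does not lose a factor by sloppy use of $\le$ versus $<$ (using $\ge t$ rather than $> t$ in the final probability bound is safe since enlarging the event only increases the probability, and the tail bound is monotone in the threshold anyway). I would also note in passing that $\|Q\|_F \ge \|Q\|$ guarantees the two arguments of the $\min$ are consistent in scale, though this is not strictly needed for the proof.
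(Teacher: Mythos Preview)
Your proposal is correct and follows essentially the same approach as the paper: both argue that the event $|\min(0,\lambda_{\min}(PQP^\T))|\ge \eps\|Q\|_F$ forces $\|PQP^\T-\frac{\tr Q}{d}I_d\|\ge \frac{\tr Q}{d}+\eps\|Q\|_F$, and then invoke \cref{lem:concentration_of_PQP} with $t=\frac{\tr Q}{d}+\eps\|Q\|_F$. The sign analysis and the substitution into the tail bound are handled exactly as in the paper's proof.
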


\begin{proof}
  Suppose that $\left|\min(0, \lambda_{\min}(PQP^\T))\right| \ge \eps\norm{Q}_F$. This implies that \\
  $\lambda_{\min}(PQP^\T)<0$ and $|\lambda_{\min}(PQP^\T)| \ge \eps \norm{Q}_F$. 
  Furthermore, since $\tr Q$ is positive, we have that
  $\lambda_{\min}(PQP^\T-\frac{\tr Q}{d} I_d)<0$ and  
  $|\lambda_{\min}(PQP^\T-\frac{\tr Q}{d} I_d)| \ge \frac{\tr Q}{d} + \eps \norm{Q}_F.$ 
  The last inequality implies that $\norm{PQP^\T-\frac{\tr Q}{d} I_d} \ge \frac{\tr Q}{d} + \eps \norm{Q}_F$.
  
  By the above argument, we obtain the following inequality: 
  \begin{align*}
    &\Prob\left[\left|\min(0, \lambda_{\min}(PQP^\T)) \right| \ge \eps \norm{Q}_F\right] \\
    &\qquad \le
    \Prob\left[\norm{PQP^\T-\frac{\tr Q}{d} I_d} \ge \frac{\tr Q}{d} + \eps \norm{Q}_F\right]. 
  \end{align*}
  
  Taking $t = \dfrac{\tr Q}{d} + \eps \norm{Q}_F$ in  \cref{lem:concentration_of_PQP} ends the proof.
\end{proof}
  
Next, we evaluate the difference between $x^\T Qx$ and $x^\T P^\T \bar Q^+ Px \\ (= x^\T P^\T \mathcal F^+\left(PQP^\T\right) Px)$ for a fixed vector $x$, where $\F^+$ denotes the projection onto the positive semidefinite cone.
The following theorem will be used to evaluate the error between the optimal values of $\P$ and $\CRP$.
In \cite{d2019random}, the authors use  \cref{lem:basic_properties_of_random_matrix} (iii) to evaluate the error between the optimal values of $\P$ and $\RP$. In this sense, \cref{thm:xQx_xPPQ_posPPx} is an extension of  \cref{lem:basic_properties_of_random_matrix} (iii).

The following theorem is proven by probabilistically evaluating the difference between the optimal values of $\RP$ and $\CRP$
 due to the randomness of problem $\RP$, though the convexification technique itself is a deterministic operation.

\begin{theorem}\label{thm:xQx_xPPQ_posPPx}
  Let $Q$ be an $n\times n$ symmetric matrix that satisfies $\tr Q >0$ and let
  $P$ be a $d\times n$ random matrix whose entries are sampled from $\mathrm N (0, 1/d)$.

  Then, for any $x\in\R^n$, with probability at least 
  \begin{align*}
  &1-8\operatorname{rank} Q \cdot\exp(-\C_0\eps_1^2 d)-2\exp(-\C_0\eps_2^2 d)\\
  &\qquad -2\cdot 9^d \exp \left(-\C_{1} \min \left(\frac{\left(\tr Q + \eps_3 d\norm{Q}_F\right)^2}{4\C^6 \norm{Q}_F^2}, \frac{\tr Q + \eps_3 d\norm{Q}_F}{2\C^3 \norm Q} \right)\right), 
  \end{align*}
  we have
  $$\left|x^\T Qx - x^\T P^\T \mathcal F^+\left(PQP^\T\right) Px\right| \le (3\eps_1 + \eps_3 + \eps_2\eps_3 )\norm{x}^2 \norm{Q}_F.$$
\end{theorem}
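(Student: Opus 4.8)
The plan is to insert the intermediate quantity $x^\T P^\T (PQP^\T) P x$ and split the error by the triangle inequality:
$$\left|x^\T Qx - x^\T P^\T \F^+(PQP^\T) Px\right| \le \left|x^\T Qx - x^\T P^\T PQP^\T P x\right| + \left|x^\T P^\T PQP^\T P x - x^\T P^\T \F^+(PQP^\T) Px\right|.$$
The first term is exactly the quantity controlled by \cref{lem:basic_properties_of_random_matrix} (iii): with probability at least $1-8\operatorname{rank}Q\cdot\exp(-\C_0\eps_1^2 d)$ it is bounded by $3\eps_1\norm{x}^2\norm{Q}_F$. So the real work is to bound the second term, which quantifies the cost of replacing $\bar Q = PQP^\T$ by its projection $\bar Q^+ = \F^+(\bar Q)$ onto the positive semidefinite cone.

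For the second term I would write it as the quadratic form $(Px)^\T(\bar Q - \bar Q^+)(Px)$ and use the spectral decomposition of $\bar Q$: if $\bar Q = \sum_i \mu_i v_iv_i^\T$, then $\bar Q - \F^+(\bar Q) = \sum_{i:\mu_i<0}\mu_i v_iv_i^\T$ is negative semidefinite with operator norm $\norm{\bar Q - \bar Q^+} = \left|\min(0,\lambda_{\min}(\bar Q))\right|$. Hence
$$\left|(Px)^\T(\bar Q - \bar Q^+)(Px)\right| \le \norm{\bar Q - \bar Q^+}\,\norm{Px}^2 = \left|\min\bigl(0,\lambda_{\min}(PQP^\T)\bigr)\right|\cdot\norm{Px}^2.$$
Now I bound the two factors separately. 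Since $\tr Q>0$, applying \cref{cor:neg_spec_of_PQP} with $\eps = \eps_3$ gives $\left|\min(0,\lambda_{\min}(PQP^\T))\right| \le \eps_3\norm{Q}_F$ outside an event whose probability is at most $2\cdot 9^d\exp\bigl(-\C_1\min(\cdots)\bigr)$, i.e. exactly the last term in the statement. The Johnson--Lindenstrauss bound \cref{lem:JLL_random_matrix} with $\eps = \eps_2$ gives $\norm{Px}^2 \le (1+\eps_2)\norm{x}^2$ outside an event of probability at most $2\exp(-\C_0\eps_2^2 d)$. On the intersection of these two events the second term is at most $\eps_3(1+\eps_2)\norm{x}^2\norm{Q}_F$.

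Finally I would take a union bound over the three events coming from \cref{lem:basic_properties_of_random_matrix} (iii), \cref{cor:neg_spec_of_PQP}, and \cref{lem:JLL_random_matrix}; their failure probabilities sum to exactly the complement of the probability asserted in the theorem. On the good event,
$$\left|x^\T Qx - x^\T P^\T \F^+(PQP^\T) Px\right| \le 3\eps_1\norm{x}^2\norm{Q}_F + \eps_3(1+\eps_2)\norm{x}^2\norm{Q}_F = (3\eps_1 + \eps_3 + \eps_2\eps_3)\norm{x}^2\norm{Q}_F,$$
which is the desired estimate. The only genuinely non-routine step is the observation that $\bar Q - \F^+(\bar Q)$ is precisely the negative spectral part of $\bar Q$, so that its quadratic form is uniformly controlled by $\left|\min(0,\lambda_{\min}(\bar Q))\right|$; once that is in place, the rest is a matter of substituting the three earlier probabilistic bounds and bookkeeping the failure probabilities.
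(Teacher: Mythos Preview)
Your proposal is correct and matches the paper's proof essentially step for step: the same triangle-inequality decomposition, the same identification of $\bar Q - \F^+(\bar Q)$ as the negative spectral part with operator norm $|\min(0,\lambda_{\min}(PQP^\T))|$, and the same union bound over \cref{lem:basic_properties_of_random_matrix}(iii), \cref{lem:JLL_random_matrix}, and \cref{cor:neg_spec_of_PQP}. The only cosmetic difference is that the paper passes through the square root $\sqrt{-\bar Q^-}$ to reach the operator-norm bound, whereas you invoke the spectral decomposition directly.
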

  
\begin{proof}
  By \cref{lem:JLL_random_matrix}, \cref{lem:basic_properties_of_random_matrix} (iii) and \cref{cor:neg_spec_of_PQP}, with probability at least 
  \begin{align*}
    &1-8\operatorname{rank} Q \cdot\exp(-\C_0\eps_1^2 d)-2\exp(-\C_0\eps_2^2 d)\\
    &\qquad -2\cdot 9^d \exp \left(-\C_{1} \min \left(\frac{\left(\tr Q + \eps_3 d\norm{Q}_F\right)^2}{4\C^6 \norm{Q}_F^2}, \frac{\tr Q + \eps_3 d\norm{Q}_F}{2\C^3 \norm Q} \right)\right), 
  \end{align*}
  we have 
  \begin{align}
    &\left|x^\T Qx - x^\T P^\T PQP^\T Px\right| \le 3\eps_1 \norm{x}^2 \norm{Q}_F, \label{eq:xQx_xPPQ_posPPx_eq_1}\\[5pt]
    &\|P x\|^{2} \leq(1+\eps_2)\|x\|^{2},\label{eq:xQx_xPPQ_posPPx_eq_2} \\[5pt]
    &\left|\min(0, \lambda_{\min}(PQP^\T)) \right| \le \eps_3 \norm{Q}_F.  \label{eq:xQx_xPPQ_posPPx_eq_3}
  \end{align}

  First, we decompose the error $\left|x^\T Qx - x^\T P^\T \mathcal F^+\left(PQP^\T\right) Px\right|$ into two terms:
  \begin{align*}
    &\left|x^\T Qx - x^\T P^\T \mathcal F^+\left(PQP^\T\right) Px\right|\\
    \le &\left|x^\T Qx - x^\T P^\T PQP^\T Px\right| + \left|x^\T P^\T PQP^\T Px-x^\T P^\T \mathcal F^+\left(PQP^\T\right) Px\right|.
  \end{align*}
  The upper bound on the first term is given by \cref{eq:xQx_xPPQ_posPPx_eq_1}. 
  To bound the second term, we define $\bar Q^- = PQP^\T-\mathcal F^+\left(PQP^\T\right)$. Since $-\bar Q^- \succeq O$, 
  we can define its non-negative square root $\sqrt {-\bar Q^-}$. With these notations, 
  we get the upper bound on the second term as follows:
  \begin{align*}
    \left|x^\T P^\T PQP^\T Px-x^\T P^\T \mathcal F^+\left(PQP^\T\right) Px\right|
    =&\left|x^\T P^\T(-\bar Q^-) Px\right|\\
    =&\norm{\sqrt {-\bar Q^-}Px}^2\\
    \le&\norm{\sqrt{-\bar Q^-}}^2 \norm{Px}^2\\
    =&  \norm{\bar Q^-} \norm{Px}^2\\
    =&  \left|\min\bigl(0, \lambda_{\min}(PQP^\T)\bigr)\right| \norm{Px}^2\\
    \le& \eps_3 \norm{Q}_F (1+\eps_2)\|x\|^{2}. \\
    &\quad \quad (\text{by } \cref{eq:xQx_xPPQ_posPPx_eq_2}\cref{eq:xQx_xPPQ_posPPx_eq_3})
  \end{align*}
\end{proof}

In the next lemma we evaluate the probability shown in \cref{cor:neg_spec_of_PQP} or  \cref{thm:xQx_xPPQ_posPPx}.

\begin{lemma}\label{lem:prob_bound}
  Let $Q$ be an $n\times n$ symmetric matrix that satisfies $\tr Q >0$ and define
  $$\tilde r \equiv\frac{\norm{Q}_F^2}{\norm{Q}^2}, \quad \tilde k \equiv \frac{\tr Q}{\norm{Q}}.$$
  Furthermore let $\mathcal{D}\ge \mathcal{C}$.

  If (i) $d \ge -\dfrac{2}{\eps_3}\dfrac{\tilde k}{\sqrt{\tilde r}} + \dfrac{12\mathcal{D}^6}{\C_1\eps_3^2}-\dfrac{\log \delta}{3}$ and 
  (ii) $d < \dfrac{2\mathcal{D}^3\tilde r - \tilde k}{\eps_3 \sqrt{\tilde r}}$, then 
  $$2\cdot 9^d \exp \left(-\C_{1} \min \left(\frac{\left(\tr Q + \eps_3 d\norm{Q}_F\right)^2}{4\C^6 \norm{Q}_F^2}, \frac{\tr Q + \eps_3 d\norm{Q}_F}{2\C^3 \norm Q} \right)\right) \le \delta.$$
\end{lemma}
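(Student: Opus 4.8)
The plan is to strip the unknown absolute constants out of the tail bound by replacing $\mathcal{C}$ by the larger $\mathcal{D}$, thereby reducing the claim to a single scalar inequality in $d$, and then to verify that inequality using hypotheses (i) and (ii).

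First I would normalize the exponent. Writing $\tr Q=\tilde k\norm{Q}$ and $\norm{Q}_F=\sqrt{\tilde r}\,\norm{Q}$, the two arguments of the $\min$ become $\frac{(\tilde k+\eps_3 d\sqrt{\tilde r})^2}{4\mathcal{C}^6\tilde r}$ and $\frac{\tilde k+\eps_3 d\sqrt{\tilde r}}{2\mathcal{C}^3}$; since $\mathcal{D}\ge\mathcal{C}>0$, each is bounded below by the same expression with $\mathcal{C}$ replaced by $\mathcal{D}$, and hypothesis (ii) is exactly the inequality $\tilde k+\eps_3 d\sqrt{\tilde r}<2\mathcal{D}^3\tilde r$, which is precisely the condition making the quadratic expression the smaller of the two. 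Hence the $\min$ is bounded below by $\frac{(\tilde k+\eps_3 d\sqrt{\tilde r})^2}{4\mathcal{D}^6\tilde r}=\frac{\eps_3^2}{4\mathcal{D}^6}(d+a)^2$, where $a:=\frac{\tilde k}{\eps_3\sqrt{\tilde r}}>0$ (positivity uses $\tr Q>0$). Writing $\kappa:=\frac{\C_1\eps_3^2}{4\mathcal{D}^6}$, taking logarithms, and using $2\cdot 9^d\le \exp(3d)$ (valid for $d\ge 1$), it then suffices to prove the scalar inequality $\kappa(d+a)^2\ge 3d+\log(1/\delta)$.

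To close this I would use the identity $(d+a)^2=d(d+2a)+a^2$ together with hypothesis (i), which rearranges to $d+2a\ge\frac{3}{\kappa}+\frac{1}{3}\log(1/\delta)$. This gives $\kappa(d+a)^2\ge 3d+\frac{\kappa d}{3}\log(1/\delta)+\kappa a^2$, so it remains to show $\frac{\kappa d}{3}\log(1/\delta)+\kappa a^2\ge\log(1/\delta)$. If $d\ge\frac{3}{\kappa}$, this follows from the first summand alone. If $d<\frac{3}{\kappa}$, set $q:=\frac{3}{\kappa}-d>0$; then hypothesis (i) also gives $a\ge\frac{q}{2}+\frac{1}{6}\log(1/\delta)$, so expanding the square and using the AM--GM inequality $\frac{q^2}{4}+\frac{(\log(1/\delta))^2}{36}\ge\frac{q}{6}\log(1/\delta)$ gives $\kappa a^2\ge\frac{\kappa q}{3}\log(1/\delta)$, whence $\frac{\kappa d}{3}\log(1/\delta)+\kappa a^2\ge\frac{\kappa(d+q)}{3}\log(1/\delta)=\log(1/\delta)$.

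The one point that needs care is the order of operations with the absolute constants: one must pass from $\mathcal{C}$ to $\mathcal{D}$ \emph{before} invoking hypothesis (ii), because (ii) --- like (i) --- is phrased in terms of $\mathcal{D}$, and it is (ii) that decides which branch of the $\min$ is active. Everything after that is elementary; the only mildly delicate step is the case split on $d\ge\frac{3}{\kappa}$, which is forced because hypothesis (i) controls the combination $d+2a$ but not $a$ by itself, so for large $d$ the needed slack must be extracted from the $d(d+2a)$ term rather than from $a^2$. I would also record the harmless standing assumptions $\eps_3>0$, $\delta\in(0,1)$, and $d\ge 1$, all of which hold in the setting of the paper.
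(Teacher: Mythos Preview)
Your argument is correct and follows the same overall route as the paper: replace $\mathcal{C}$ by $\mathcal{D}$, use $2\cdot 9^d\le e^{3d}$, invoke hypothesis (ii) to select the quadratic branch of the $\min$, and reduce everything to the scalar inequality $\kappa(d+a)^2\ge 3d+\log(1/\delta)$ with $\kappa=\C_1\eps_3^2/(4\mathcal{D}^6)$ and $a=\tilde k/(\eps_3\sqrt{\tilde r})$. The only divergence is in how that last scalar inequality is verified. The paper solves the associated quadratic in $d$ explicitly and bounds its larger root via the elementary estimate $\sqrt{\alpha^2-\beta}\le \alpha-\beta/(2\alpha)$, then checks that (i) places $d$ above that root. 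You instead exploit the identity $(d+a)^2=d(d+2a)+a^2$, read (i) as a lower bound on $d+2a$, and close with a short case split on $d\gtrless 3/\kappa$ together with AM--GM. Your verification avoids square roots and the quadratic formula entirely, at the cost of a case distinction; the paper's is more uniform but relies on the slightly ad hoc square-root bound. Both are equally rigorous and of comparable length.
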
 

\begin{proof}
  For simplicity, we use 
  $p=\dfrac{\tilde k}{\sqrt{\tilde r}}$ and $q = \sqrt{\tilde r}$. 
  The conditions (i) and (ii) are equivalent to 
  \begin{align}
    &d \ge -\dfrac{2p}{\eps_3} + \dfrac{12\mathcal{D}^6}{\C_1\eps_3^2}-\dfrac{\log \delta}{3},\label{eq:prob_bound_eq_1}\\
    &d < \dfrac{2\mathcal{D}^3q - p}{\eps_3}. \label{eq:prob_bound_eq_2}
  \end{align}
 We will prove that
 $$2\cdot 9^d \exp \left(-\C_{1} \min \left(\frac{\left(\tr Q + \eps_3 d\norm{Q}_F\right)^2}{4\mathcal{D}^6 \norm{Q}_F^2}, \frac{\tr Q + \eps_3 d\norm{Q}_F}{2\mathcal{D}^3 \norm Q} \right)\right) \le \delta$$
 holds which will end the proof as $\mathcal{D}\ge \C$.
  
  We can easily show that $2\cdot 9^d\le\exp(3d)$ for all $d\in\mathbb N$ and thus, we obtain
  \begin{align*}
       &2\cdot 9^d \exp \left(-\C_{1} \min \left(\frac{\left(\tr Q + \eps_3 d\norm{Q}_F\right)^2}{4\mathcal{D}^6 \norm{Q}_F^2}, \frac{\tr Q + \eps_3 d\norm{Q}_F}{2\mathcal{D}^3 \norm Q} \right)\right)\\
    \le&\exp \left(3d-\C_{1} \min \left(\frac{\left(\tr Q + \eps_3 d\norm{Q}_F\right)^2}{4\mathcal{D}^6 \norm{Q}_F^2}, \frac{\tr Q + \eps_3 d\norm{Q}_F}{2\mathcal{D}^3 \norm Q} \right)\right)\\
    =&  \exp \left(3d-\C_{1} \min \left(\frac{(p + \eps_3 d)^2}{4\mathcal{D}^6}, \frac{pq + \eps_3 d q}{2\mathcal{D}^3} \right)\right).\\
  \end{align*}
  Since \cref{eq:prob_bound_eq_2} is equivalent to $p + \eps_3 d < 2\mathcal{D}^3q$, we have
  $$\min \left(\frac{(p + \eps_3 d)^2}{4\mathcal{D}^6}, \frac{pq + \eps_3 d q}{2\mathcal{D}^3}\right) = \frac{(p + \eps_3 d)^2}{4\mathcal{D}^6}.$$
  Thus, 
  \begin{align*}
    &2\cdot 9^d \exp \left(-\C_{1} \min \left(\frac{\left(\tr Q + \eps_3 d\norm{Q}_F\right)^2}{4\mathcal{D}^6 \norm{Q}_F^2}, \frac{\tr Q + \eps_3 d\norm{Q}_F}{2\mathcal{D}^3 \norm Q} \right)\right) \\
    \le& \exp \left(3d-\frac{\C_{1} }{4\mathcal{D}^6} (p + \eps_3 d)^2\right).
  \end{align*}

  Next, we show that $3d-\frac{\C_{1}}{4\mathcal{D}^6} (p + \eps_3 d)^2 \le \log \delta$,
  which will complete the proof.
  Note that the quadratic equation,
  \begin{equation*}
    3d-\frac{\C_{1} }{4\mathcal{D}^6} (p + \eps_3 d)^2 = \log \delta 
  \end{equation*}
  is equivalent to 
  \begin{equation*}
    \C_1\eps_3^2 d^2 + (2\C_1\eps_3p-12\mathcal{D}^6)d + 4\mathcal{D}^6 \log \delta + C_1p^2=0
  \end{equation*}
  of which real solutions are given by (if there are any)
  \begin{align*}
    d &=\frac{-2\C_1\eps_3 p + 12\mathcal{D}^6 \pm \sqrt{(2\C_1\eps_3 p - 12\mathcal{D}^6)^2 - 4\C_1\eps_3^2(4\mathcal{D}^6\log \delta+\C_1p^2)}}{2\C_1\eps_3^2}\\
    &= \frac{-\C_1\eps_3 p + 6\mathcal{D}^6 \pm \sqrt{36\mathcal{D}^{12}-12\mathcal{D}^6\C_1\eps_3p-4\mathcal{D}^6\C_1\eps_3^2 \log \delta}}{\C_1\eps_3^2}.
  \end{align*}
  
  If there are no real roots, then $3d-\frac{\C_{1}}{4\mathcal{D}^6} (p + \eps_3 d)^2 < \log \delta$ holds
  for all $d$. Thus, it is sufficient to show that 
  $$d\ge \frac{-\C_1\eps_3 p + 6\mathcal{D}^6 + \sqrt{36\mathcal{D}^{12}-12\mathcal{D}^6\C_1\eps_3p-4\mathcal{D}^6\C_1\eps_3^2 \log \delta}}{\C_1\eps_3^2}.$$

  To show this inequality, we use the following inequality $a-\dfrac{b}{2a}\ge \sqrt{a^2-b}\ (a,b>0, a^2>b)$ which can be easily verified by squaring both sides.
  Applying this inequality with
  $a = 6\mathcal{D}^6$ and $b = 12\mathcal{D}^6\C_1\eps_3p+4\mathcal{D}^6\C_1\eps_3^2 \log \delta$, we obtain
  \begin{equation}\label{eq:prob_bound_eq_3}
    6\mathcal{D}^6 - \frac{12\mathcal{D}^6\C_1\eps_3p+4\mathcal{D}^6\C_1\eps_3^2 \log \delta}{2\cdot 6\mathcal{D}^6} \ge 
    \sqrt{36\mathcal{D}^{12}-12\mathcal{D}^6\C_1\eps_3p-4\mathcal{D}^6\C_1\eps_3^2 \log \delta}, 
  \end{equation}
  which completes the proof:
  \begin{align*}
    d 
    \ge& -\dfrac{2p}{\eps_3} + \dfrac{12\mathcal{D}^6}{\C_1\eps_3^2}-\dfrac{\log \delta}{3} & (\text{by } \cref{eq:prob_bound_eq_1}) \\
    =&  \frac{-\C_1\eps_3 p + 6\mathcal{D}^6}{\C_1\eps_3^2} + \frac{1}{\C_1\eps_3^2} \left(6\mathcal{D}^6 - \frac{12\mathcal{D}^6\C_1\eps_3p+4\mathcal{D}^6\C_1\eps_3^2 \log \delta}{2\cdot 6\mathcal{D}^6}\right)\\
    \ge&\frac{-\C_1\eps_3 p + 6\mathcal{D}^6 + \sqrt{36\mathcal{D}^{12}-12\mathcal{D}^6\C_1\eps_3p-4\mathcal{D}^6\C_1\eps_3^2 \log \delta}}{\C_1\eps_3^2}. &(\text{by } \cref{eq:prob_bound_eq_3})
  \end{align*}
\end{proof}

\begin{remark}\label{rem:quantities_rk}
  The quantities $\tilde r$ and $\tilde k$ defined in \cref{lem:prob_bound} are known as stable rank (also called numerical rank)
  \cite{koltchinskii2017concentration} and effective rank (also called intrinsic dimension) \cite{tropp2015introduction, vershynin2012introduction}. Clearly, $\tilde k, \tilde r \le \operatorname{rank} Q$ and 
  they can be interpreted as the robust version of the usual rank. These quantities are used in covariance estimation~\cite{vershynin2018high}.

\end{remark}
\subsection{The error bound in a special case}

We now evaluate the error between the optimal value of the original problem \cref{eq:problem_P} and that of the convexified projected problem \cref{eq:problem_CRP} under the following  assumptions.
\begin{assumption}\label{assumption_1}
  In the original problem \cref{eq:problem_P}, we assume the followings:
  \begin{enumerate}
    \renewcommand{\labelenumi}{\rm (A\arabic{enumi})}
    \item The problem \cref{eq:problem_P} has a finite optimal value at $x= x^*$.
    \item All the rows of A are unit vectors {\normalfont (i.e., $\norm{A_i}=1$)}.
    \item $\tr Q > 0.$  
    \item There exists a closed ball $B(0, r)$ which is contained in the polytope $Ax\le b$.
  \end{enumerate}
\end{assumption}

(A2) holds without loss of generality: 
if $\norm{A_i} \neq 1$, we replace $A_i$ by $A_i/\norm{A_i}$ and $b_i$ by $b_i/\norm{A_i}$ and then the assumption is satisfied.
(A3) is essential in this paper, though it is replaced by a weaker assumption later in \cref{sec:scaling_preconditioning}. As shown in  \cref{lem:concentration_of_PQP} and \cref{cor:neg_spec_of_PQP}, 
the eigenvalues of $\bar Q = PQP^\T$ concentrate around $\frac{\tr Q}{d}$, and ignoring the negative spectrum of $PQP^\T$
does not change the problem so much especially when $\tr Q$ is large enough.
(A4) also can be weaken later in  \cref{sec:subGenCase}; An essential requirement is that the polytope is full dimensional;
equality constraints are not acceptable in \cref{eq:problem_P}.

We investigate the relationship between $\P$, $\CRP$, and the following problem:
\begin{equation*}
  \CRP_\eps \equiv \min \{u^\T \bar Q^+ u + \bar c^\T u\mid \bar Au \le b + \eps\norm{x^*}\bm 1\},
\end{equation*}
where $\eps>0$ and $\bm 1$ is the all-one vector.

For an optimization problem ${\mathbf F}$, we denote by $\opt{\mathbf F}$ the optimal objective function value of {\bf F}.
We also let $S, T$ and $T_\eps$ be the feasible regions of $\P$, $\CRP$ and $\CRP_\eps$, respectively. 


We can easily show that $\opt{\CRP} \ge \opt{\P}$ and $\opt{\CRP} \ge \opt{\CRP_\eps}$.

\begin{lemma}\label{lem:CRP_is_relaxetion_of_P}
Under \cref{assumption_1} (A1) and (A4), $\opt{\CRP}$ is finite and
  $$\opt{\CRP} \ge \opt{\P}.$$
\end{lemma}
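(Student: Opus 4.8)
The plan is to exhibit a feasible point of $\P$ whose objective value equals $\opt{\CRP}$, or more precisely, to show that every feasible point of $\CRP$ maps back to a feasible point of $\P$ with no larger objective value. The natural candidate map is $u \mapsto P^\T u$. First I would take an optimal solution $u^*$ of $\CRP$ (its existence and finiteness is the content of the ``$\opt{\CRP}$ is finite'' part, which I address below). Set $x := P^\T u^*$. By definition of $\bar A = AP^\T$, the constraint $\bar A u^* \le b$ reads $A P^\T u^* \le b$, i.e. $Ax \le b$, so $x$ is feasible for $\P$. Hence $\opt{\P} \le x^\T Q x + c^\T x$.

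Next I would compare $x^\T Q x + c^\T x$ with the $\CRP$ objective at $u^*$. We have $c^\T x = c^\T P^\T u^* = (Pc)^\T u^* = \bar c^\T u^*$, so the linear parts match exactly. For the quadratic part, $x^\T Q x = (P^\T u^*)^\T Q (P^\T u^*) = u^{*\T} (P Q P^\T) u^* = u^{*\T} \bar Q u^*$, whereas the $\CRP$ objective contributes $u^{*\T} \bar Q^+ u^*$. Since $\bar Q^+ = \F^+(\bar Q)$ is the projection of $\bar Q$ onto the PSD cone, we have $\bar Q^+ \succeq \bar Q$ (the difference $\bar Q^+ - \bar Q = -\bar Q^-$ is PSD, as already used in the proof of \cref{thm:xQx_xPPQ_posPPx}), hence $u^{*\T} \bar Q u^* \le u^{*\T} \bar Q^+ u^*$. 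Therefore
\[
\opt{\P} \le x^\T Q x + c^\T x = u^{*\T}\bar Q u^* + \bar c^\T u^* \le u^{*\T}\bar Q^+ u^* + \bar c^\T u^* = \opt{\CRP}.
\]

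It remains to argue that $\opt{\CRP}$ is finite, which is where (A1) and (A4) enter. The objective of $\CRP$ is convex (since $\bar Q^+ \succeq O$), so it is bounded below on the feasible set iff it does not have a recession direction of strict decrease; but actually the cleanest route is: the argument above shows $\opt{\CRP} \ge \opt{\P}$ whenever $\CRP$ has any feasible point, and by (A4) the ball $B(0,r)$ lies in $\{Ax \le b\}$, so $u = 0$ satisfies $\bar A \cdot 0 = 0 \le b$ (as $b \ge 0$ componentwise, since $0 \in B(0,r) \subseteq \{Ax\le b\}$), giving $\opt{\CRP} < +\infty$; and (A1) gives $\opt{\P} > -\infty$, so combining, $\opt{\CRP}$ is finite. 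One subtlety: to run the displayed chain I implicitly used that an optimal $u^*$ is \emph{attained}; if one prefers to avoid that, replace $u^*$ by an arbitrary feasible $u$ and take the infimum over $u$ at the end, which gives the inequality without needing attainment.

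The main obstacle is not really an obstacle but a point requiring care: establishing that $\opt{\CRP}$ is bounded below (finiteness from the $-\infty$ side). Here one must note that a convex quadratic over a polyhedron containing a full-dimensional ball need not be bounded below in general — it could recede to $-\infty$ along an unbounded direction of the polyhedron — so the finiteness claim genuinely relies on the relation $\opt{\CRP}\ge\opt{\P}$ together with (A1); the inequality and the finiteness are proved in tandem rather than separately. I would make this logical order explicit: first show $b \ge 0$ and hence feasibility, then prove the inequality $\opt{\CRP}\ge\opt{\P}$ for every feasible $u$, then conclude finiteness from (A1).
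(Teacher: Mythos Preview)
Your proposal is correct and follows essentially the same approach as the paper: map a feasible $u$ for $\CRP$ to $P^\T u$, check feasibility in $\P$, and use $\bar Q^+ \succeq \bar Q$ to compare objectives; finiteness is then obtained from (A4) (feasibility of $u=0$) and (A1) (lower bound via $\opt{\P}$). The only cosmetic difference is that the paper establishes $\opt{\CRP}>-\infty$ by a contradiction argument with a minimizing sequence before taking an optimal $u^*$, whereas you suggest running the inequality for arbitrary feasible $u$ and passing to the infimum, which sidesteps the attainment issue you flagged; both organizations amount to the same argument.
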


\begin{proof}
  First, we show that $\opt{\CRP}$ is finite.
  \cref{assumption_1} (A4) implies that $u=0$ is feasible for $\CRP$, and therefore,  $\opt{\CRP}<\infty.$ 
  We can confirm that $\opt{\CRP}>-\infty$ by contradiction. If $\opt{\CRP}=-\infty$, 
  there exists a sequence $\{u^k\}_{k=1}^\infty$ in $T$ such that ${u^k}^\T \bar Q^+ u^k + \bar c^\T u^k \to -\infty \quad (k\to \infty)$. 
  Since $u^k\in T$, we have $\bar Au^k = A P^\T u^k \le b$, which implies $P^\T u_k \in S$ and
  \begin{align*}
    \opt{\P}
    \le& (P^\T u^k)^\T Q(P^\T u^k) + c^\T (P^\T u^k)\\
    \le& {u^k}^\T \F^+(PQP^\T) u^k + (Pc)^\T u^k\\
    =& {u^k}^\T \bar Q^+ u^k + \bar c^\T u^k \to -\infty,
  \end{align*}
  which implies a contradiction to \cref{assumption_1} (A1).

  To show the rest part of the proof, let $u^*$ be an optimum of $\CRP$. From the same argument before, we have that $P^\T u^* \in S$ and
  \begin{align*}
    \opt{\P}
    \le& (P^\T u^*)^\T Q(P^\T u^*) + c^\T (P^\T u^*)\\
    \le & {u^*}^\T \bar Q^+ u^* + \bar c^\T u^* \\
    =& \opt{\CRP}.
  \end{align*}
\end{proof}

\begin{lemma}\label{lem:CRPeps_is_relaxetion_of_CRP}
Under \cref{assumption_1} (A1) and (A4), 
  \begin{equation*}
    \opt{\CRP} \ge \opt{\CRP_\eps}.
  \end{equation*}
\end{lemma}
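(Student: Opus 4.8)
The plan is to observe that $\CRP_\eps$ is a relaxation of $\CRP$ in the constraint set: both problems have the identical objective function $u^\T \bar Q^+ u + \bar c^\T u$, and they differ only in that the right-hand side of the linear constraints is enlarged from $b$ to $b + \eps\norm{x^*}\bm 1$. Since $\eps > 0$ and $\norm{x^*} \ge 0$, we have $b \le b + \eps\norm{x^*}\bm 1$ componentwise, and therefore every $u$ satisfying $\bar A u \le b$ also satisfies $\bar A u \le b + \eps\norm{x^*}\bm 1$. This gives the feasible-set inclusion $T \subseteq T_\eps$.

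Next I would invoke \cref{lem:CRP_is_relaxetion_of_P} (whose hypotheses are exactly \cref{assumption_1} (A1) and (A4), which we are assuming here) to guarantee that $\opt{\CRP}$ is finite, so the comparison is between genuine real numbers rather than $\pm\infty$. Let $u^*$ be an optimum of $\CRP$, so that $u^* \in T$ and $\opt{\CRP} = {u^*}^\T \bar Q^+ u^* + \bar c^\T u^*$. By the inclusion $T \subseteq T_\eps$, the point $u^*$ is feasible for $\CRP_\eps$ as well. Since $\CRP_\eps$ is a minimization problem and its objective agrees with that of $\CRP$, we conclude
\begin{equation*}
  \opt{\CRP_\eps} \le {u^*}^\T \bar Q^+ u^* + \bar c^\T u^* = \opt{\CRP},
\end{equation*}
which is the desired inequality.

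There is essentially no obstacle here: the statement is the standard fact that relaxing the constraints of a minimization problem cannot increase the optimal value. The only points requiring a word of care are (a) confirming that the two problems share the same objective — which is immediate from the definitions of $\CRP$ and $\CRP_\eps$ — and (b) ensuring $\opt{\CRP}$ is finite so that the inequality is meaningful, which is supplied by \cref{lem:CRP_is_relaxetion_of_P}. Note also that if $x^* = 0$ the two problems coincide and the inequality holds trivially with equality, so no separate case analysis is needed.
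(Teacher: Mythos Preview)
Your proposal is correct and follows essentially the same approach as the paper: the paper's proof simply notes the feasible-set inclusion $T\subseteq T_\eps$ and appeals to \cref{lem:CRP_is_relaxetion_of_P} to ensure the problems are feasible, which is exactly what you do (with a bit more detail in spelling out the optimum $u^*$ argument).
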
  
\begin{proof}
  This follows immediately from $T\subseteq T_\eps$ of two feasible regions. 
  Note that, as shown in \cref{lem:CRP_is_relaxetion_of_P}, we have $0\in T$ and the feasibilities of $\CRP$ and $\CRP_\eps$ are guaranteed.
\end{proof}
  The inequality $\opt{\CRP} \ge \opt{\CRP_\eps}$ includes the case where  
  $\opt{\CRP_\eps}=-\infty$, though, as we will see later in \cref{thm:gap_CRP_CRPeps}, this case does not occur.

Next, we investigate the gap between $\P$ and $\CRP_\eps$.

\begin{theorem}\label{thm:gap_P_CRPeps}
  Let $x^*\in\R^n$ be an optimum of $\P$. Under \cref{assumption_1} (A1), (A2) and (A3),
  for any $\eps, \eps_1, \eps_2, \eps_3, \eps_4,  >0$, we have  
  $$\opt{\P} \ge \opt{\CRP_\eps} - (3\eps_1 + \eps_3 + \eps_2\eps_3 )\norm{x^*}^2 \norm{Q}_F - \eps_4\norm{x^*}\norm{c}$$
  with probability at least 
  \begin{multline*}
    1 - 4m\exp (-\C_0\eps^2 d) - 8\operatorname{rank} Q \cdot\exp(-\C_0\eps_1^2 d)-2\exp(-\C_0\eps_2^2 d)-2\exp(-\C_0\eps_4^2 d) \\
    -2\cdot 9^d \exp \left(-\C_{1} \min \left(\frac{\left(\tr Q + \eps_3 d\norm{Q}_F\right)^2}{4\C^6 \norm{Q}_F^2}, \frac{\tr Q + \eps_3 d\norm{Q}_F}{2\C^3 \norm Q} \right)\right).
  \end{multline*}
\end{theorem}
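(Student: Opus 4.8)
The plan is to use the optimum $x^*$ of $\P$ to manufacture a feasible point for $\CRP_\eps$, namely $u = Px^*$, and then estimate the objective value of $\CRP_\eps$ at that point in terms of $\opt{\P}$. All the randomness is absorbed into a handful of high-probability events coming from the lemmas already proved, combined by a union bound.

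First I would check feasibility. Since $x^*$ is feasible for $\P$ we have $Ax^*\le b$, and by (A2) the rows of $A$ are unit vectors, so \cref{lem:basic_properties_of_random_matrix}~(ii) gives, with probability at least $1-4m\exp(-\C_0\eps^2 d)$, that $\bar A u = AP^\T P x^* \le Ax^* + \eps\norm{x^*}\bm 1 \le b + \eps\norm{x^*}\bm 1$, i.e.\ $u=Px^*\in T_\eps$. Consequently $\opt{\CRP_\eps}\le u^\T\bar Q^+ u + \bar c^\T u = x^{*\T}P^\T\F^+(PQP^\T)Px^* + c^\T P^\T P x^*$, regardless of whether $\opt{\CRP_\eps}$ is finite.

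Next I would bound the two terms on the right. For the quadratic term, apply \cref{thm:xQx_xPPQ_posPPx} to the fixed vector $x=x^*$ — this is exactly where (A3), $\tr Q>0$, is needed — so that on an event of probability at least $1-8\operatorname{rank}Q\cdot\exp(-\C_0\eps_1^2 d)-2\exp(-\C_0\eps_2^2 d)-2\cdot 9^d\exp(-\C_1\min(\cdot,\cdot))$ one has $x^{*\T}P^\T\F^+(PQP^\T)Px^* \le x^{*\T}Qx^* + (3\eps_1+\eps_3+\eps_2\eps_3)\norm{x^*}^2\norm{Q}_F$. For the linear term, apply the one-sided version of \cref{lem:basic_properties_of_random_matrix}~(i) with $x=c$, $y=x^*$ (only the upper inequality is needed, which is obtained from \cref{lem:JLL_random_matrix} applied to a single normalized vector, hence costs only $2\exp(-\C_0\eps_4^2 d)$ rather than $4\exp(-\C_0\eps_4^2 d)$), yielding $c^\T P^\T P x^* \le c^\T x^* + \eps_4\norm{c}\norm{x^*}$.

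Finally I would intersect these events by a union bound; the total failure probability is precisely $4m\exp(-\C_0\eps^2 d)+8\operatorname{rank}Q\cdot\exp(-\C_0\eps_1^2 d)+2\exp(-\C_0\eps_2^2 d)+2\exp(-\C_0\eps_4^2 d)+2\cdot 9^d\exp(-\C_1\min(\cdot,\cdot))$, matching the statement. On the good event, chaining the inequalities gives $\opt{\CRP_\eps}\le x^{*\T}Qx^* + c^\T x^* + (3\eps_1+\eps_3+\eps_2\eps_3)\norm{x^*}^2\norm{Q}_F + \eps_4\norm{c}\norm{x^*} = \opt{\P} + (3\eps_1+\eps_3+\eps_2\eps_3)\norm{x^*}^2\norm{Q}_F + \eps_4\norm{c}\norm{x^*}$, and rearranging yields the claim. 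There is no real analytic obstacle here: the substantive work is already contained in \cref{thm:xQx_xPPQ_posPPx}, and the only thing demanding care is the probability bookkeeping — in particular, keeping the inner-product estimate one-sided so its contribution is $2\exp(-\C_0\eps_4^2 d)$, and checking that \cref{thm:xQx_xPPQ_posPPx}'s ``for any $x$'' applies to the (data-dependent but, conditionally on the problem, deterministic) vector $x^*$.
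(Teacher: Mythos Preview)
Your proposal is correct and follows essentially the same route as the paper: show $Px^*$ is feasible for $\CRP_\eps$ via \cref{lem:basic_properties_of_random_matrix}(ii), control the quadratic term via \cref{thm:xQx_xPPQ_posPPx}, control the linear term via (the one-sided form of) \cref{lem:basic_properties_of_random_matrix}(i), and take a union bound. The only minor wobble is the parenthetical explanation that the one-sided inner-product estimate comes from ``a single normalized vector'' --- it still requires the polarization identity applied to $c\pm x^*$, but one-sided on each, which indeed yields the $2\exp(-\C_0\eps_4^2 d)$ cost you claim.
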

\begin{proof}
  By  \cref{lem:basic_properties_of_random_matrix}(i)(ii) and  \cref{thm:xQx_xPPQ_posPPx}, with probability at least
  \begin{multline*}
    1 - 4m\exp (-\C_0\eps^2 d) - 8\operatorname{rank} Q \cdot\exp(-\C_0\eps_1^2 d)-2\exp(-\C_0\eps_2^2 d)-2\exp(-\C_0\eps_4^2 d) \\
    -2\cdot 9^d \exp \left(-\C_{1} \min \left(\frac{\left(\tr Q + \eps_3 d\norm{Q}_F\right)^2}{4\C^6 \norm{Q}_F^2}, \frac{\tr Q + \eps_3 d\norm{Q}_F}{2\C^3 \norm Q} \right)\right),
  \end{multline*}
  the following inequalities hold: 
  \begin{align}
  &AP^\T P x^* \le Ax^*+\eps \norm{x^*} \bm 1, \label{eq:APPxstar} \\
  &|{x^*}^\T Qx^* - {x^*}^\T P^\T \mathcal F^+\left(PQP^\T\right) Px^*| 
  \le (3\eps_1 + \eps_3 + \eps_2\eps_3 )\norm{x^*}^2 \norm{Q}_F, \label{eq:xstarQxstar}\\
  &c^\T P^\T P x^* \le c^\T x^*+\eps_4 \norm{x^*}\norm{c}. \label{eq:cPPxstar}
  \end{align}

  \cref{eq:APPxstar} implies that $Px^*$ is a feasible solution for $\CRP_\eps$.
  By \cref{eq:xstarQxstar} and \cref{eq:cPPxstar}, we have
  \begin{align*}
  &\opt{\P}\\
  =&   {x^*}^\T Qx^* + c^\T x^* \\
  \ge& {x^*}^\T P^\T \mathcal F^+\left(PQP^\T\right) Px^* - (3\eps_1 + \eps_3 + \eps_2\eps_3 )\norm{x^*}^2 \norm{Q}_F  + c^\T P^\T Px^*-\eps_4\norm{x^*}\norm{c}\\
  =  &   (Px^*)^\T \bar Q^+ (Px^*) + \bar c^\T (Px^*)- (3\eps_1 + \eps_3 + \eps_2\eps_3 )\norm{x^*}^2 \norm{Q}_F-\eps_4\norm{x^*}\norm{c}\\
  \ge& \opt{\CRP_\eps}  - (3\eps_1 + \eps_3 + \eps_2\eps_3 )\norm{x^*}^2 \norm{Q}_F-\eps_4\norm{x^*}\norm{c}.
  \end{align*}
\end{proof}

It should be noted that 
the error estimate in  \cref{thm:gap_P_CRPeps} includes the information on the optimum of $\P$ (more precisely,
$\|x^*\|$). The estimate seems unrealistic because $x^*$ is not available. However, the necessary quantity is  $\|x^*\|$.
In the case of a bounded feasible region for $\P$,
it may be bounded by some threshold $R$ such as $\|x^*\| \leq R$ by proving that
the feasible region of $\P$ is included in a ball $\|x\| \leq R$.

\begin{theorem}\label{thm:gap_CRP_CRPeps}
Under \cref{assumption_1} (A1), (A2) and (A4),
$\opt{\CRP_\eps}$ is finite and
  we have 
  \begin{equation*}
    \opt{\CRP} \ge \opt{\CRP_\eps} \ge \left(1+\eps\frac{\norm{x^*}}{r}\right)\opt{\CRP}.
  \end{equation*}
\end{theorem}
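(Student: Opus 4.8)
The plan is to establish the two inequalities separately. The first one, $\opt{\CRP}\ge\opt{\CRP_\eps}$, is already the content of \cref{lem:CRPeps_is_relaxetion_of_CRP} (since $T\subseteq T_\eps$), so the real work is the lower bound $\opt{\CRP_\eps}\ge\bigl(1+\eps\tfrac{\norm{x^*}}{r}\bigr)\opt{\CRP}$, together with showing that $\opt{\CRP_\eps}$ is finite (i.e. bounded below). The geometric idea is that $T_\eps$ is an $\eps\norm{x^*}$-relaxation of the polytope $T=\{u:\bar A u\le b\}$, and because $\bar A$ has rows $AP^\T$ with $\norm{A_i}=1$ (by (A2)), the enlargement of each halfspace is controlled; since $B(0,r)\subseteq\{Ax\le b\}$ pulls back to a ball around $0$ inside $T$ (as used in \cref{lem:CRP_is_relaxetion_of_P}), a scaling argument will let me map any feasible point of $\CRP_\eps$ back into $T$ after shrinking by a factor close to $1$.

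Concretely, first I would quantify the relaxation: for a row $\bar A_i = A_i P^\T$, the constraint in $T_\eps$ is $\bar A_i u \le b_i + \eps\norm{x^*}$, and I want to compare with the scaled constraint defining $(1+\alpha)^{-1}$ times a point, for a suitable $\alpha$ depending on $\eps,\norm{x^*},r$. The key observation is that $0\in T$ with a ball of radius (comparable to) $r$ around it — more precisely $B(0,r)\subseteq\{x:Ax\le b\}$ means $b_i\ge r\norm{A_i}=r$ for all $i$ (since $A_i\cdot(r A_i/\norm{A_i})\le b_i$), so each $b_i\ge r>0$. Then for $u\in T_\eps$, consider $\tilde u = u/(1+\eps\norm{x^*}/r)$: I claim $\tilde u\in T$. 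Indeed $\bar A_i\tilde u = \bar A_i u/(1+\eps\norm{x^*}/r)\le (b_i+\eps\norm{x^*})/(1+\eps\norm{x^*}/r)$, and since $b_i\ge r$ we have $(b_i+\eps\norm{x^*})/(1+\eps\norm{x^*}/r)\le b_i$ — this last inequality rearranges to $\eps\norm{x^*}\le b_i\eps\norm{x^*}/r$, i.e. $b_i\ge r$, which holds. Hence $\tilde u\in T$, so $\CRP_\eps$ is feasible implies $\CRP$ is feasible at $\tilde u$, and in particular $T_\eps$ being bounded-below on the convex quadratic follows from $\opt{\CRP}$ finite (\cref{lem:CRP_is_relaxetion_of_P}) via this scaling.

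For the value comparison, let $u$ be feasible for $\CRP_\eps$ and set $\beta = 1+\eps\norm{x^*}/r\ge 1$ and $\tilde u=u/\beta\in T$. Since $\bar Q^+\succeq O$, the objective $g(u)={u}^\T\bar Q^+ u+\bar c^\T u$ is convex with $g(0)=0$, and by convexity $g(\tilde u)=g(u/\beta)\le (1/\beta)g(u)+(1-1/\beta)g(0)=g(u)/\beta$ whenever $g(u)\ge 0$; when $g(u)<0$ the inequality $g(u/\beta)\le g(u)/\beta$ can fail, so instead I would argue directly: $g(u)=\opt{\CRP_\eps}$ at the optimum, $\tilde u\in T$ gives $\opt{\CRP}\le g(\tilde u)$, and I bound $g(\tilde u)$ from above. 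Writing $g(\tilde u)=\tfrac1{\beta^2}u^\T\bar Q^+u+\tfrac1\beta\bar c^\T u$, and noting $\beta\ge1$ so $1/\beta^2\le 1/\beta$, we get (using $u^\T\bar Q^+u\ge0$) $g(\tilde u)\le \tfrac1\beta\bigl(u^\T\bar Q^+u+\bar c^\T u\bigr)=\tfrac1\beta\opt{\CRP_\eps}$ when $\opt{\CRP_\eps}\le 0$, and $g(\tilde u)\le u^\T\bar Q^+u+\bar c^\T u=\opt{\CRP_\eps}$ directly when $\opt{\CRP_\eps}>0$ but then $\beta\opt{\CRP}\le\beta\opt{\CRP_\eps}$ trivially; in either case $\opt{\CRP}\le\tfrac1\beta\opt{\CRP_\eps}$ rearranges to $\opt{\CRP_\eps}\ge\beta\,\opt{\CRP}=\bigl(1+\eps\tfrac{\norm{x^*}}{r}\bigr)\opt{\CRP}$. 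The main subtlety I expect to have to be careful about is exactly this sign bookkeeping — making sure the convexity/scaling estimate on $g$ points the right way regardless of the sign of $\opt{\CRP}$ — and confirming $b_i\ge r$ cleanly from (A4) with (A2); the polytope geometry is routine but the inequality directions need care.
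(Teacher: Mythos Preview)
Your approach is essentially the same as the paper's: scale any $u\in T_\eps$ by the factor $1/\beta=r/(r+\eps\norm{x^*})$ to land in $T$ (using $b_i\ge r$, which follows from (A2) and (A4) exactly as you say), then compare objective values. The paper writes this with $\alpha=1/\beta$ and uses convexity of $g$ together with $g(0)=0$ to get $g(\alpha u_\eps^*)\le\alpha\, g(u_\eps^*)$, hence $\opt{\CRP}\le\alpha\,\opt{\CRP_\eps}$, which rearranges to the desired lower bound; finiteness of $\opt{\CRP_\eps}$ is obtained by the same scaling applied to an arbitrary feasible point.

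Your worry about signs is unnecessary, and the case split is the only shaky part of the proposal. The Jensen step $g(\alpha u)\le\alpha g(u)+(1-\alpha)g(0)=\alpha g(u)$ holds for \emph{every} convex $g$ with $g(0)=0$ and every $\alpha\in[0,1]$, regardless of the sign of $g(u)$; and your own direct computation $g(u/\beta)=\tfrac{1}{\beta^2}u^\T\bar Q^+u+\tfrac{1}{\beta}\bar c^\T u\le\tfrac{1}{\beta}g(u)$ is also unconditional (it uses only $u^\T\bar Q^+u\ge0$ and $1/\beta^2\le1/\beta$). So no separate treatment of $\opt{\CRP_\eps}>0$ is needed --- and the argument you sketch for that case is not correct as written, since ``$\beta\,\opt{\CRP}\le\beta\,\opt{\CRP_\eps}$ trivially'' would require $\opt{\CRP}\le\opt{\CRP_\eps}$, the reverse of what you have. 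In any event that case is vacuous: $0\in T\subseteq T_\eps$ and $g(0)=0$ force $\opt{\CRP_\eps}\le0$. Drop the case analysis and your proof is clean and matches the paper's.
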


\begin{proof}
  First, we admit the finiteness of $\opt{\CRP_\eps}$ and show the inequality.
  We will confirm the finiteness at the end of this proof.

  Since we have already shown that $\opt{\CRP} \ge \opt{\CRP_\eps}$ in  \cref{lem:CRPeps_is_relaxetion_of_CRP}, the second inequality:
  \begin{equation*}
    \opt{\CRP_\eps} \ge \frac{\opt{\CRP}}{\alpha},
  \end{equation*}
  where we let $\ds\alpha = 1-\frac{\eps\norm{x^*}}{r+ \eps\norm{x^*}}$, is proven now. 
  Since $B(0, r)\subset S$, $Ax \le b$ for all $x$ satisfying $\norm{x} \le r$. Therefore, 
  \begin{equation}\label{eq:b_le_r}
    b_i \ge \max_{\norm{x}\le r} (Ax)_i =r\norm{A_i} = r\quad(i=1,2,\dots,m).
  \end{equation}
  Next, we construct a feasible solution for $\CRP$ close to $u^*_\eps$, 
  an optimum of $\CRP_\eps$. 
  This will enable us to evaluate the error between $\opt{\CRP}$ and $\opt{\CRP_\eps}$.

  By the definition of $\CRP_\eps$, we have $\bar A u^*_\eps\le b+\eps\norm{x^*}\bm 1$, and thus, for each $i= 1,2,\dots,m$, we have
  \begin{align*}
    \bigl(\alpha\bar A u^*_\eps\bigr)_i &\le \left(1-\frac{\eps\norm{x^*}}{r+ \eps\norm{x^*}}\right)(b_i+\eps\norm{x^*})\\
    &= b_i+\eps\norm{x^*} - \frac{\eps\norm{x^*}}{r+ \eps\norm{x^*}}(b_i+\eps\norm{x^*})\\
    &\le b_i+\eps\norm{x^*} - \frac{\eps\norm{x^*}}{r+ \eps\norm{x^*}}(r+\eps\norm{x^*}) & \text{(by \cref{eq:b_le_r})}\\
    &= b_i.
  \end{align*}
  This implies that $\alpha u^*_\eps$ is feasible for $\CRP$, hence
  \begin{equation}\label{eq:vRPgu'}
      \opt{\CRP} \le g(\alpha u^*_\eps), 
  \end{equation}
  where $g(\cdot)$ is the objective function of $\CRP$ and $\CRP_\eps$, i.e. $g(u) = u^\T\bar Q^+u + \bar c^\T u$. 
  Note that  $\bar Q^+$ is positive semidefinite, which implies $g$ is convex
  and thus  we have
  \begin{equation}\label{eq:gJensen}
      g(\alpha u^*_\eps) = g(\alpha u^* _\eps + (1-\alpha)0) \le \alpha g(u^* _\eps) + (1-\alpha)g(0)=\alpha \opt{\CRP_\eps}.
  \end{equation}

  From the two inequalities \cref{eq:vRPgu'} and \cref{eq:gJensen}, 
  $$\opt{\CRP_\eps} \ge \frac{\opt{\CRP}}{\alpha}$$
  is derived.

  And lastly, we show that $\opt{\CRP_\eps}$ is finite.
  Let $u_\eps$ be an arbitrary feasible solution of $\CRP_\eps$, i.e. $\bar A u_\eps\le b+\eps\norm{x^*}\bm 1$,
  then, substituting $u_\eps$ for $u_\eps^*$ in the above discussion, we have
\[    \opt{\CRP} \le g(\alpha u_\eps) \le  \alpha g(u_\eps).\]
  Since we have already shown the finiteness of $\opt{\CRP}$ in \cref{lem:CRP_is_relaxetion_of_P}, 
  $g(u_\eps)$ is turned out to be lower bounded, and thus $\opt{\CRP_\eps}$ is finite.
\end{proof}

\begin{lemma}\label{lem:mainlem}
Under  \cref{assumption_1},
  for any $\eps,\eps_1, \eps_2, \eps_3, \eps_4 >0$, 
  we have 
  \begin{align*}
    \opt{\CRP} &\ge \opt{\P} \\
    &\ge \left(1+\eps\frac{\norm{x^*}}{r}\right)\opt{\CRP} - (3\eps_1 + \eps_3 + \eps_2\eps_3 )\norm{x^*}^2 \norm{Q}_F - \eps_4\norm{x^*}\norm{c}
  \end{align*}
  with probability at least
  \begin{multline*}
    1 - 4m\exp (-\C_0\eps^2 d) - 8\operatorname{rank} Q \cdot\exp(-\C_0\eps_1^2 d)-2\exp(-\C_0\eps_2^2 d)-2\exp(-\C_0\eps_4^2 d) \\
    -2\cdot 9^d \exp \left(-\C_{1} \min \left(\frac{\left(\tr Q + \eps_3 d\norm{Q}_F\right)^2}{4\C^6 \norm{Q}_F^2}, \frac{\tr Q + \eps_3 d\norm{Q}_F}{2\C^3 \norm Q} \right)\right).
  \end{multline*}
\end{lemma}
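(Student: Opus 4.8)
The plan is to obtain \cref{lem:mainlem} by simply chaining the three results established immediately above, observing that only one of them is probabilistic. First I would apply \cref{lem:CRP_is_relaxetion_of_P}, which needs only (A1) and (A4): this gives, deterministically, the left-hand inequality $\opt{\CRP} \ge \opt{\P}$, and it also records that $\opt{\CRP}$ is finite. Next I would invoke \cref{thm:gap_P_CRPeps}, which needs (A1), (A2), (A3): on the event of probability at least the quantity displayed in the statement of \cref{lem:mainlem} (which is verbatim the probability appearing in \cref{thm:gap_P_CRPeps}) we have
\[
\opt{\P} \ge \opt{\CRP_\eps} - (3\eps_1 + \eps_3 + \eps_2\eps_3)\norm{x^*}^2\norm{Q}_F - \eps_4\norm{x^*}\norm{c}.
\]

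Then, on that same event, I would substitute the lower bound on $\opt{\CRP_\eps}$ supplied by \cref{thm:gap_CRP_CRPeps}, which needs (A1), (A2), (A4) and is entirely deterministic (it rests on convexity of the objective $g$ and on the geometry of the dilation $u\mapsto\alpha u$, not on the draw of $P$): it yields $\opt{\CRP_\eps} \ge \bigl(1+\eps\norm{x^*}/r\bigr)\opt{\CRP}$, and in particular guarantees $\opt{\CRP_\eps}$ is finite so the substitution is legitimate. Chaining the two displayed inequalities then gives
\[
\opt{\P} \ge \left(1+\eps\frac{\norm{x^*}}{r}\right)\opt{\CRP} - (3\eps_1 + \eps_3 + \eps_2\eps_3)\norm{x^*}^2\norm{Q}_F - \eps_4\norm{x^*}\norm{c},
\]
which is the right-hand inequality of the lemma. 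Since the hypotheses used along the way are exactly (A1), (A2), (A3), (A4), i.e.\ \cref{assumption_1}, and since \cref{lem:CRP_is_relaxetion_of_P} and \cref{thm:gap_CRP_CRPeps} introduce no randomness, no union bound is needed and the conclusion holds with precisely the probability inherited from \cref{thm:gap_P_CRPeps}.

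The only point requiring care — and what I would flag as the ``main obstacle,'' though it is minor — is to keep the inequality directions straight through the substitution: \cref{thm:gap_P_CRPeps} bounds $\opt{\P}$ from below in terms of $\opt{\CRP_\eps}$, while \cref{thm:gap_CRP_CRPeps} bounds $\opt{\CRP_\eps}$ from below in terms of $\opt{\CRP}$, so replacing $\opt{\CRP_\eps}$ by $\bigl(1+\eps\norm{x^*}/r\bigr)\opt{\CRP}$ only weakens the bound and the chain stays valid. Moreover, because $u=0$ is feasible for $\CRP$ with objective value $0$, we have $\opt{\CRP}\le 0$, which is what makes the factor $1+\eps\norm{x^*}/r>1$ on the right-hand side shrink $\opt{\CRP}$ rather than inflate it, so that the resulting two-sided estimate $\bigl(1+\eps\norm{x^*}/r\bigr)\opt{\CRP} - (\cdots)\le \opt{\P}\le \opt{\CRP}$ is meaningful. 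Beyond this bookkeeping, no further estimation is required.
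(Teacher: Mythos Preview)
Your proposal is correct and follows exactly the paper's approach: the paper's own proof is a one-line citation of \cref{lem:CRP_is_relaxetion_of_P}, \cref{thm:gap_P_CRPeps}, and \cref{thm:gap_CRP_CRPeps}, and you have simply spelled out how those three results chain together. Your added bookkeeping about inequality directions and the sign of $\opt{\CRP}$ is accurate and makes the argument more explicit than the paper itself.
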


\begin{proof}
  This statement follows from  \cref{lem:CRP_is_relaxetion_of_P}, \cref{thm:gap_P_CRPeps} and 
  \cref{thm:gap_CRP_CRPeps}.
\end{proof}

\begin{theorem}[Approximation theorem under \cref{assumption_1}]\label{thm:main_theorem}
  Define
  $\ds\tilde r \equiv\frac{\norm{Q}_F^2}{\norm{Q}^2},  \tilde k \equiv \frac{\tr Q}{\norm{Q}}.$
  Let 
  $\eps, \eps_1, \eps_3, \eps_4 \in (0,1], \delta_1, \delta_2 \in (0,1/2)$ and $\mathcal{D}\ge \C$. 
  Suppose that 
  \begin{enumerate}
    \setlength{\leftskip}{3.0em}
    \renewcommand{\labelenumi}{(\roman{enumi})} 
    \item $d\ge \max\left\{\dfrac{\log ((24\operatorname{rank} Q +6)/\delta_1)}{\C_{0}\eps_1^2}, 
                          \dfrac{\log (6/\delta_1)}{\C_{0}\eps_4^2}, 
                          \dfrac{\log (12m/\delta_1)}{\C_{0}\eps^2}\right\} $, 
    \item $d \ge -\dfrac{2}{\eps_3}\dfrac{\tilde k}{\sqrt{\tilde r}} + \dfrac{12\mathcal{D}^6}{\C_1\eps_3^2}-\dfrac{\log \delta_2}{3}$,
    \item $d < \dfrac{2\mathcal{D}^3\tilde r - \tilde k}{\eps_3 \sqrt{\tilde r}}$.
  \end{enumerate}

  Then with probability at least $1-\delta_1-\delta_2$, the following inequalities hold:
  \begin{align*}
    \opt{\CRP} &\ge \opt{\P} \\
    &\ge \left(1+\eps\frac{\norm{x^*}}{r}\right)\opt{\CRP} - (3\eps_1 + 2\eps_3)\norm{x^*}^2 \norm{Q}_F - \eps_4\norm{x^*}\norm{c}.
  \end{align*}
\end{theorem}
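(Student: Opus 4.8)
The plan is to obtain the statement directly from \cref{lem:mainlem} through the single internal parameter choice $\eps_2 := \eps_1$, and then to split the resulting failure probability into a Johnson--Lindenstrauss part, absorbed by hypothesis~(i), and a spectral-concentration part, absorbed by \cref{lem:prob_bound} with $\delta = \delta_2$. First I would apply \cref{lem:mainlem}, whose hypotheses are exactly \cref{assumption_1} (in force here, since the present theorem is stated under it), with the parameters $\eps, \eps_1, \eps_2 = \eps_1, \eps_3, \eps_4$. Because $\eps_1 \in (0,1]$ we have $\eps_3 + \eps_2\eps_3 = \eps_3(1 + \eps_1) \le 2\eps_3$, hence
\[
  (3\eps_1 + \eps_3 + \eps_2\eps_3)\norm{x^*}^2\norm{Q}_F \le (3\eps_1 + 2\eps_3)\norm{x^*}^2\norm{Q}_F,
\]
so on the event on which the two-sided chain of \cref{lem:mainlem} holds the (weaker) chain claimed here also holds, since replacing the subtracted error term by a larger one only loosens the lower bound on $\opt{\P}$. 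With $\eps_2 = \eps_1$ the failure probability of \cref{lem:mainlem} collapses to
\[
  4m\exp(-\C_0\eps^2 d) + (8\operatorname{rank}Q + 2)\exp(-\C_0\eps_1^2 d) + 2\exp(-\C_0\eps_4^2 d) + R_d,
\]
where $R_d = 2\cdot 9^d\exp\!\left(-\C_1\min\!\left(\dfrac{(\tr Q + \eps_3 d\norm{Q}_F)^2}{4\C^6\norm{Q}_F^2},\, \dfrac{\tr Q + \eps_3 d\norm{Q}_F}{2\C^3\norm Q}\right)\right)$.

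Next I would bound each of the first three terms by $\delta_1/3$ using hypothesis~(i): from $d \ge \frac{\log(12m/\delta_1)}{\C_0\eps^2}$ we get $4m\exp(-\C_0\eps^2 d) \le \delta_1/3$; writing $24\operatorname{rank}Q + 6 = 3(8\operatorname{rank}Q + 2)$, the bound $d \ge \frac{\log((24\operatorname{rank}Q + 6)/\delta_1)}{\C_0\eps_1^2}$ gives $(8\operatorname{rank}Q + 2)\exp(-\C_0\eps_1^2 d) \le \delta_1/3$; and $d \ge \frac{\log(6/\delta_1)}{\C_0\eps_4^2}$ gives $2\exp(-\C_0\eps_4^2 d) \le \delta_1/3$. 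These sum to at most $\delta_1$. For $R_d$, I would observe that hypotheses~(ii) and~(iii) of the theorem are verbatim hypotheses~(i) and~(ii) of \cref{lem:prob_bound} with $\delta = \delta_2$ (and $\mathcal D \ge \C$ holds by assumption), so \cref{lem:prob_bound} yields $R_d \le \delta_2$. Adding the two estimates, the total failure probability is at most $\delta_1 + \delta_2 < 1$ (as $\delta_1, \delta_2 \in (0,1/2)$), so the displayed inequalities hold with probability at least $1 - \delta_1 - \delta_2$.

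I do not expect a genuine obstacle: the argument is essentially bookkeeping with the probability budget. The one step deserving care is the choice $\eps_2 = \eps_1$, which must do two things at once --- collapse the objective-error coefficient from $3\eps_1 + \eps_3 + \eps_2\eps_3$ to $3\eps_1 + 2\eps_3$, and merge the $8\operatorname{rank}Q$ term and the isolated factor $2$ of \cref{lem:mainlem} into the single quantity $(8\operatorname{rank}Q + 2)\exp(-\C_0\eps_1^2 d)$ that the constant $24\operatorname{rank}Q + 6 = 3(8\operatorname{rank}Q + 2)$ in hypothesis~(i) is designed to absorb. Apart from that, I would only double-check the elementary monotonicity remark (enlarging a subtracted quantity preserves a one-sided lower bound) and that hypotheses~(ii)--(iii) are exactly the $\delta = \delta_2$ instance of the hypotheses of \cref{lem:prob_bound}.
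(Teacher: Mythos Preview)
Your proposal is correct and follows essentially the same approach as the paper: apply \cref{lem:mainlem}, simplify the error coefficient, bound the three Johnson--Lindenstrauss terms by $\delta_1/3$ each via hypothesis~(i), and bound the spectral term by $\delta_2$ via \cref{lem:prob_bound}. The only cosmetic difference is the internal choice of $\eps_2$: the paper takes $\eps_2 = 1$ (so $\eps_3 + \eps_2\eps_3 = 2\eps_3$ exactly, and then uses $\eps_1 \le 1$ to merge $2\exp(-\C_0 d)$ with the $8\operatorname{rank}Q$ term), whereas you take $\eps_2 = \eps_1$ (so the merge is automatic but the coefficient bound $\eps_3(1+\eps_1) \le 2\eps_3$ becomes an inequality); both are equally valid.
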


\begin{proof}
  Let $\eps_2=1$  in  \cref{lem:mainlem}, then we have 
  \begin{align*}
    \opt{\CRP} &\ge \opt{\P} \\
    &\ge \left(1+\eps\frac{\norm{x^*}}{r}\right)\opt{\CRP} - (3\eps_1 + 2\eps_3)\norm{x^*}^2 \norm{Q}_F - \eps_4\norm{x^*}\norm{c}
  \end{align*}
  with probability at least
  \begin{multline*}
    1 - 4m\exp (-\C_0\eps^2 d) - 8\operatorname{rank} Q \cdot\exp(-\C_0\eps_1^2 d)-2\exp(-\C_0 d)-2\exp(-\C_0\eps_4^2 d) \\
    -2\cdot 9^d \exp \left(-\C_{1} \min \left(\frac{\left(\tr Q + \eps_3 d\norm{Q}_F\right)^2}{4\C^6 \norm{Q}_F^2}, \frac{\tr Q + \eps_3 d\norm{Q}_F}{2\C^3 \norm Q} \right)\right).
  \end{multline*}

  By (i) and the assumption $\eps_1\le 1$, 
  $$8\operatorname{rank} Q \cdot \exp(-\C_0\eps_1^2 d) + 2\exp(-\C_0 d) 
  \le (8\operatorname{rank} Q +2)\exp(-\C_0\eps_1^2 d) \le \frac{\delta_1}{3}.$$

  (i) also implies 
  \begin{align*}
    2\exp(-\C_0\eps_4^2 d)  \le \frac{\delta_1}{3},\\
    4m\exp (-\C_0\eps^2 d) \le \frac{\delta_1}{3}.
  \end{align*}

  Since (ii), (iii) are the same assumptions as the ones made in  \cref{lem:prob_bound}, we have 
  $$ 2\cdot 9^d \exp \left(-\C_{1} \min \left(\frac{\left(\tr Q + \eps_3 d\norm{Q}_F\right)^2}{4\C^6 \norm{Q}_F^2}, \frac{\tr Q + \eps_3 d\norm{Q}_F}{2\C^3 \norm Q} \right)\right) \le \delta_2,$$
  ending the proof.
\end{proof}

  \cref{thm:main_theorem} has shown some upper and lower bounds on the size $d$
  of a randomly projected QP.
  Indeed, to bound $| \opt{\P}-\opt{\CRP}|$, the existence of these bounds is reasonable.
  Larger $d$ makes the gap between $ \opt{\P}$ and $\opt{\RP}$ small, while smaller $d$ makes the gap between $\opt{\RP}$ and
  $\opt{\CRP}$ small  because the matrix $\bar Q$ ($\approx \frac{\tr Q}{d}I_d$) in the objective of $\opt{\RP}$ tends to be positive definite.
  Therefore, to make the bound of $| \opt{\P}-\opt{\CRP}|$ small, a well-balanced $d$ for both gaps is needed.
  In \cref{examp_d}, we show  that for a certain class of non-convex QPs \cref{eq:problem_P}, there exists $d$ satisfying the above (i)-(iii), and furthermore, how small $d$ can be in those cases.

  Now we discuss 
  how to obtain a feasible solution for the original problem $\P$ from $u^*$, the optimal solution of $\CRP$.
  As shown in the proof of \cref{lem:CRP_is_relaxetion_of_P}, $x'=P ^\T u^*$ is feasible for $\P$
  and 
  \begin{equation*}
    \opt{\CRP} \ge {x'}^\T Qx' + c^\T x' \ge \opt{\P},
  \end{equation*}
  so that $x'$ is a feasible solution of $\P$ that we expect to achieve an approximate optimal value. 

\subsection{The error bound in a more general case} \label{sec:subGenCase}
Previously, 
we assumed that the feasible region contains a sphere centered at the origin ((A4) in \cref{assumption_1}).
Next, we will consider a more general situation, i.e. we make the
following assumption.

\begin{assumption}\label{assumption_2} 
  {\rm (A1),(A2),(A3)} in \cref{assumption_1} and 
  \begin{enumerate}
    \setcounter{enumi}{3}
    \renewcommand{\labelenumi}{\rm (A\arabic{enumi}')}
    \item There exists a closed ball $B(x_0, r)$ which is contained in the polytope $Ax\le b$.
  \end{enumerate}
\end{assumption}

Considering the variable translation $y=x-x_0$, we obtain the translated problem:
\begin{equation*}
  \P_{\mathrm T} \equiv \min_y \{y^\T Qy + (2Qx_0+c)^\T y \mid Ay \le b-Ax_0\}.
\end{equation*}
It is obvious that $\opt{\P_{\mathrm T}} = \opt{\P}-x_0^\T Qx_0-c^\T x_0$, hence it is enough to solve $\P_{\mathrm T}$ instead of $\P$.
Moreover, there exists a closed ball $B(0, r)$ which is contained in the polytope $Ax\le b-Ax_0$ so that we can apply the previous argument.
Define the convexified randomly projected problem of $\P_{\mathrm T}$:
\begin{equation*}
  \CRP_{\mathrm T} \equiv \min_v \{v^\T \bar Q^+ v + (P(2Qx_0+c))^\T v\mid \bar Av \le b -Ax_0\},
\end{equation*}
and then, by  \cref{thm:main_theorem}, we obtain a generalized approximation theorem. 
Note that one of optimal points of $\P_{\mathrm T}$ is $y^*=x^*-x_0$.

\begin{theorem}[Approximation theorem under \cref{assumption_2}]\label{thm:generalized_main_theorem}
  Define
  $\ds\tilde r \equiv\frac{\norm{Q}_F^2}{\norm{Q}^2},  \tilde k \equiv \frac{\tr Q}{\norm{Q}}.$
  Let 
  $\eps, \eps_1, \eps_3, \eps_4 \in (0,1], \delta_1, \delta_2 \in (0,1/2)$ and $\mathcal{D}\ge \C$.

  Suppose that 
  \begin{enumerate}
    \setlength{\leftskip}{3.0em}
    \renewcommand{\labelenumi}{(\roman{enumi})} 
    \item $d\ge \max\left\{\dfrac{\log ((24\operatorname{rank} Q +6)/\delta_1)}{\C_{0}\eps_1^2}, 
                          \dfrac{\log (6/\delta_1)}{\C_{0}\eps_4^2}, 
                          \dfrac{\log (12m/\delta_1)}{\C_{0}\eps^2}\right\} $, 
    \item $d \ge -\dfrac{2}{\eps_3}\dfrac{\tilde k}{\sqrt{\tilde r}} + \dfrac{12\mathcal{D}^6}{\C_1\eps_3^2}-\dfrac{\log \delta_2}{3}$,
    \item $d < \dfrac{2\mathcal{D}^3\tilde r - \tilde k}{\eps_3 \sqrt{\tilde r}}$.
  \end{enumerate}

  Then with probability at least $1-\delta_1-\delta_2$, the following inequality holds:
  \begin{multline*}
    \opt{\CRP_{\mathrm T}} \ge \opt{\P_{\mathrm T}} \\
    \ge \left(1+\eps\frac{\norm{y^*}}{r}\right)\opt{\CRP_{\mathrm T}} - (3\eps_1 + 2\eps_3 )\norm{y^*}^2\norm{Q}_F \\
    - \eps_4\norm{y^*}\norm{2Qx_0+c}.
  \end{multline*}
\end{theorem}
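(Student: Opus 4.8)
The plan is to reduce the statement to \cref{thm:main_theorem}, applied not to $\P$ itself but to the translated problem $\P_{\mathrm T}$. Since the objective of $\P_{\mathrm T}$ has the same quadratic form $Q$ as $\P$, the quantities $\operatorname{rank} Q$, $\norm{Q}$, $\norm{Q}_F$, $\tr Q$, and hence $\tilde r$ and $\tilde k$, are all unchanged; in particular the hypotheses (i)--(iii) on $d$ are literally the same as those that \cref{thm:main_theorem} would require for $\P_{\mathrm T}$. So the core of the argument is just to check that $\P_{\mathrm T}$ satisfies \cref{assumption_1} and that $\CRP_{\mathrm T}$, as defined right before the theorem, is exactly its convexified randomly projected problem in the sense of \cref{eq:problem_CRP}.

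First I would verify \cref{assumption_1} for $\P_{\mathrm T}$: (A1) holds because $\opt{\P_{\mathrm T}} = \opt{\P} - x_0^\T Q x_0 - c^\T x_0$ is finite, attained at $y^* = x^* - x_0$; (A2) holds because the constraint matrix is still $A$, whose rows are unit vectors by \cref{assumption_2}; (A3) holds because $\tr Q > 0$ is inherited from \cref{assumption_2}; and (A4) holds because if $\norm{y} \le r$ then $\norm{(y+x_0)-x_0}\le r$, so $A(y+x_0)\le b$ by (A4$'$), i.e.\ $Ay \le b - Ax_0$, which shows that the closed ball $B(0,r)$ is contained in the feasible polytope of $\P_{\mathrm T}$.

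Next I would match up the data under the random projection. The matrix $P$ sends $\P_{\mathrm T}$ to the quadratic program with quadratic term $PQP^\T = \bar Q$, linear term $P(2Qx_0+c)$, and constraint matrix $AP^\T = \bar A$; convexifying the quadratic term replaces $\bar Q$ by $\F^+(\bar Q)=\bar Q^+$, which is precisely $\CRP_{\mathrm T}$. Therefore \cref{thm:main_theorem} applies verbatim to $\P_{\mathrm T}$ with $c$ replaced by $2Qx_0 + c$, $x^*$ replaced by $y^*$, and $b$ replaced by $b - Ax_0$; its conclusion is exactly the asserted chain
$\opt{\CRP_{\mathrm T}} \ge \opt{\P_{\mathrm T}} \ge (1+\eps\norm{y^*}/r)\opt{\CRP_{\mathrm T}} - (3\eps_1 + 2\eps_3)\norm{y^*}^2\norm{Q}_F - \eps_4\norm{y^*}\norm{2Qx_0+c}$
with probability at least $1-\delta_1-\delta_2$.

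The only genuinely non-routine point is the bookkeeping in the previous paragraph: one must be sure that the notion of ``convexified randomly projected problem'' is translation-equivariant in the right way, so that $\CRP_{\mathrm T}$ really is the object to which \cref{thm:main_theorem} refers when it is applied to $\P_{\mathrm T}$. Once that is granted, nothing further needs to be proved, since all the randomness, all the $\eps$-parameters, the constant $\mathcal{D}$, and all the dimension bounds carry over unchanged. (If one also wished to phrase the result directly in terms of $\opt{\P}$ and $\opt{\CRP}$, one would additionally add back the constant $x_0^\T Q x_0 + c^\T x_0$, but this is not needed for the statement as given.)
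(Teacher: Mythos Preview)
Your proposal is correct and matches the paper's own approach: the paper likewise obtains \cref{thm:generalized_main_theorem} by a direct application of \cref{thm:main_theorem} to the translated problem $\P_{\mathrm T}$, after observing that the translation restores (A4) while leaving $Q$ (and hence $\tilde r$, $\tilde k$, $\operatorname{rank} Q$) unchanged. Your write-up in fact supplies more detail than the paper does, explicitly verifying each part of \cref{assumption_1} and the identification of $\CRP_{\mathrm T}$ with the convexified randomly projected problem of $\P_{\mathrm T}$.
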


Now we show some conditions on the size $n,m$ and $\cond(Q)$ for non-convex QP \cref{eq:problem_P}
to guarantee the existence of $d$ satisfying the above (i)-(iii) in
\cref{thm:generalized_main_theorem}, where we define $\cond(Q)\equiv \sigma_n/\sigma_1$, the condition number of $Q$. 
In the following proposition, $P(n)$ may be formed
with specific functions such as $\log^j n$ and $n^\tau (0<\tau<1)$. 
\begin{proposition} \label{examp_d}
  Assume that $m<\tilde C n$ holds for some constant $\tilde\C$ and a function $P(n)$ satisfies that
  \begin{equation*}
    \log n \ll P(n) \quad \mbox{ and } \quad P(n) + \sqrt n \ll \frac{\sqrt{nP(n)}}{\cond(Q)^2}.
  \end{equation*} 
  Then, if $n$ is large enough, we can chose
	\begin{equation*}
    d=O\left(\frac{P(n)}{\varepsilon_0^2}\right)
  \end{equation*} 
  such that the 
(i)-(iii) of \cref{thm:generalized_main_theorem} are satisfied. Here, $\varepsilon_0$ is a constant that only depends on $\varepsilon, \varepsilon_1, \varepsilon_4,\delta_1$. 
\end{proposition}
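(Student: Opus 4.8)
The plan is to instantiate the still-free parameters $\mathcal{D}$, $\delta_2$ and $\eps_3$ of \cref{thm:generalized_main_theorem} as functions of $n$ (keeping $\eps,\eps_1,\eps_4,\delta_1$ fixed) and then to exhibit a positive integer $d$ of the claimed order inside the window cut out by (i)--(iii). I would first note that the hypothesis $P(n)+\sqrt n\ll\sqrt{nP(n)}/\cond(Q)^2$ forces $\cond(Q)<\infty$, so $Q$ is invertible and $\operatorname{rank}Q=n$; it also splits into $\cond(Q)^2\ll\sqrt{P(n)}$ and $\cond(Q)^2\ll\sqrt{n/P(n)}$, equivalently $\cond(Q)^4\ll P(n)$ and $\cond(Q)^4P(n)\ll n$. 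Since the $n$ singular values of $Q$ all lie in $[\,\norm{Q}/\cond(Q),\,\norm{Q}\,]$, one has $\norm{Q}_F^2\ge n\norm{Q}^2/\cond(Q)^2$, hence
\[
  \frac{\sqrt n}{\cond(Q)}\ \le\ \sqrt{\tilde r}\ \le\ \sqrt n ,
\]
and Cauchy--Schwarz gives $\tilde k=\tr Q/\norm{Q}\le\sqrt n\,\norm{Q}_F/\norm{Q}=\sqrt n\,\sqrt{\tilde r}$, so $0<\tilde k/\sqrt{\tilde r}\le\sqrt n$ (positivity by \cref{assumption_1}(A3)).

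Next I would set $\mathcal{D}:=\max\{\C,\cond(Q)^{1/3}\}$, $\delta_2:=1/4$ and $\eps_3:=\cond(Q)/\sqrt{P(n)}$. For $n$ large one has $\eps_3\in(0,1]$ since $\cond(Q)^2\ll\sqrt{P(n)}\le P(n)$; moreover $\mathcal{D}\ge\C$, $\mathcal{D}^3\ge\cond(Q)$ and $\mathcal{D}^6\le\C^6+\cond(Q)^2$. The key step is condition (iii): its right-hand side equals $\eps_3^{-1}\bigl(2\mathcal{D}^3\sqrt{\tilde r}-\tilde k/\sqrt{\tilde r}\bigr)$, and the bounds above give
\[
  \frac{2\mathcal{D}^3\tilde r-\tilde k}{\eps_3\sqrt{\tilde r}}
  \ \ge\ \frac{\sqrt n}{\eps_3}\left(\frac{2\mathcal{D}^3}{\cond(Q)}-1\right)
  \ \ge\ \frac{\sqrt n}{\eps_3}
  \ =\ \frac{\sqrt{nP(n)}}{\cond(Q)}
  \ \ge\ \frac{\sqrt{nP(n)}}{\cond(Q)^2},
\]
which by hypothesis is $\gg P(n)+\sqrt n$.

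The remaining task is to verify (i) and (ii) and to size $d$. Using $\operatorname{rank}Q\le n$ and $m<\tilde\C n$, the right-hand side of (i) is $O(\log n/\eps_0^2)$ with $\eps_0:=\min\{\eps,\eps_1,\eps_4\}$ (the implied constant depending only on $\delta_1,\tilde\C$ and the absolute constants). Since $\tilde k>0$ and $-\log\delta_2=\log 4$, the right-hand side of (ii) is at most
\[
  \frac{12\mathcal{D}^6}{\C_1\eps_3^2}+\frac{\log 4}{3}
  \ \le\ \frac{12(\C^6+\cond(Q)^2)}{\C_1}\cdot\frac{P(n)}{\cond(Q)^2}+\frac{\log 4}{3}
  \ \le\ \frac{12(\C^6+1)}{\C_1}\,P(n)+\frac{\log 4}{3},
\]
which is $O(P(n))$. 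Writing $M_n$ for the maximum of the right-hand sides of (i) and (ii), one then has $M_n=O(P(n)/\eps_0^2)$ (because $\log n\ll P(n)$ and $\eps_0\le1$) and $M_n\ll\sqrt{nP(n)}/\cond(Q)^2$ (because $\cond(Q)^4P(n)\ll n$), so for $n$ large $M_n+1$ stays strictly below the right-hand side of (iii). Hence $d:=\lceil\max\{M_n,1\}\rceil$ is a positive integer satisfying (i) and (ii) (as $d\ge M_n$) and (iii) (as $d\le M_n+1$), with $d=O(P(n)/\eps_0^2)$; renaming $\eps_0=\varepsilon_0$ finishes the argument.

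\textbf{Main obstacle.} The hard part is condition (iii). Bounding $2\mathcal{D}^3\tilde r-\tilde k$ term by term through $\tilde r\ge n/\cond(Q)^2$ and $\tilde k\le n$ yields positivity only when $\cond(Q)$ lies below an absolute constant, so one has to divide by $\sqrt{\tilde r}$ first and use the sharper estimate $\tilde k\le\sqrt n\,\sqrt{\tilde r}$, together with the freedom to let $\mathcal{D}$ grow like $\cond(Q)^{1/3}$ so that $2\mathcal{D}^3/\cond(Q)\ge2$. The price of a growing $\mathcal{D}$ is the factor $\mathcal{D}^6\asymp\cond(Q)^2$ appearing in (ii), which is compensated exactly by taking $\eps_3\asymp\cond(Q)/\sqrt{P(n)}$; balancing these two effects is precisely what the two hypotheses $\cond(Q)^4\ll P(n)$ and $\cond(Q)^4P(n)\ll n$ are designed to permit, and the rest is bookkeeping with $\log n\ll P(n)$.
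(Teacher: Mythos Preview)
Your argument is correct, and it takes a genuinely different route from the paper's own proof. The paper keeps $\varepsilon_3$ and $\delta_2$ as fixed constants and absorbs all the slack into $\mathcal{D}$: it sets $\mathcal{D}^6=\mathcal{C}'\bigl(d+\tilde k/\sqrt{\tilde r}\bigr)$ with $d=P(n)/\varepsilon_0^2$, so that $\mathcal{D}^6$ grows on the order of $P(n)+\sqrt n$, and then verifies (iii) via the coarse bounds $n/\cond(Q)^2\le\tilde r\le n$ and $0<\tilde k<n$. You instead couple $\mathcal{D}$ only to $\cond(Q)$ (through $\mathcal{D}=\max\{\C,\cond(Q)^{1/3}\}$) and let $\varepsilon_3=\cond(Q)/\sqrt{P(n)}\to0$; the key step in your verification of (iii) is the sharper Cauchy--Schwarz estimate $\tilde k/\sqrt{\tilde r}\le\sqrt n$, which lets you lower-bound $2\mathcal{D}^3\sqrt{\tilde r}-\tilde k/\sqrt{\tilde r}$ directly and avoids the paper's somewhat implicit choice of the constant $\mathcal{C}'$.

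Both approaches land on the same window $d=O(P(n)/\varepsilon_0^2)$. Your choice has the side benefit that the $\varepsilon_3$-contribution $2\varepsilon_3\norm{y^*}^2\norm{Q}_F$ in the error bound of \cref{thm:generalized_main_theorem} actually vanishes as $n\to\infty$, something the proposition does not ask for but which is a pleasant bonus. The paper's choice, on the other hand, keeps every user-facing tolerance $\varepsilon,\varepsilon_1,\varepsilon_3,\varepsilon_4$ fixed, which is arguably the more natural reading of the theorem's hypotheses.
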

\begin{proof}
  By $(i)$ and the assumption $\log n \ll P(n)$, we can take $d=\frac{P(n)}{\varepsilon_0^2}$
using 
a constant $\varepsilon_0$ that only depends on $\varepsilon, \varepsilon_1, \varepsilon_4,\delta_1$.

  We can chose $\mathcal{D}^6=\mathcal{C}'\left(d+\frac{\tilde{k}}{\sqrt{\tilde{r}}}\right)=\mathcal{C}'\left(\frac{P(n)}{\varepsilon_0^2}+\frac{\tilde{k}}{\sqrt{\tilde{r}}}\right)$, where $\mathcal{C}'$ is a constant, that depends only on $\varepsilon_3,\delta_2$, such that $(ii)$ is satisfied.

  To finish the proof we need to verify (iii): 
  \begin{equation*}
    \frac{P(n)}{\varepsilon_0^2} < \dfrac{2\sqrt{\mathcal{C}'\left(\frac{P(n)}{\varepsilon_0^2}+\frac{\tilde{k}}{\sqrt{\tilde{r}}}\right)}\tilde r - \tilde k}{\eps_3 \sqrt{\tilde r}},
  \end{equation*}
  which is equivalent to:
	\begin{equation}\label{eq:1}
	  \frac{\varepsilon_3}{\varepsilon_0^2}\sqrt{\tilde{r}}P(n)+\tilde{k} < 2\sqrt{\mathcal{C'}}\sqrt{\frac{P(n)}{\varepsilon_0^2}\tilde{r}^2+\tilde{k}\tilde{r}^{3/2}}.
	\end{equation}
	From the definitions of $\tilde{r}$ and $\tilde{k}$, we easily see that
	\begin{equation}\label{eq:2}
	  \frac{1}{\cond(Q)^2}n \le \tilde{r}\le n \quad \mbox{ and } \quad 0<\tilde{k} < n.
  \end{equation}
  Hence, the left-hand-side, $\frac{\varepsilon_3}{\varepsilon_0^2}\sqrt{\tilde{r}}P(n)+\tilde{k}$, of \cref{eq:1} has 
	\begin{equation*}
    \frac{\varepsilon_3}{\varepsilon_0^2} \sqrt{n} P(n) + n
  \end{equation*} 
	as a an upper bound, for $n$ large enough, and the right-hand-side of \cref{eq:1} is lower-bounded by 
	\begin{equation*}
    \frac{2\sqrt{C'}}{\varepsilon_0\cond(Q)^2}n\sqrt{P(n)}.
  \end{equation*} 
  Hence the condition 
  \begin{equation*}
    P(n) + \sqrt n \ll \frac{\sqrt{nP(n)}}{\cond(Q)^2} 
  \end{equation*}
	is enough to prove $(iii)$ and hence that $d=\frac{P(n)}{\varepsilon_0^2}$ satisfies the three condition of \cref{thm:generalized_main_theorem}, for $n$ large enough.  
\end{proof}


\subsection{Relative error}
The approximation inequality shown so far has an additive form. 
But we do not know how large or small the error term appearing in the theorem is compared to the optimal value $\opt{\P_{\mathrm T}}$.
The purpose of this section is to transform the approximation inequality in \cref{thm:generalized_main_theorem} into a multiplicative form, $\eta\cdot \opt{\P_{\mathrm T}} \ge \opt{\CRP_{\mathrm T}} \ge \opt{\P_{\mathrm T}}$, where
$\eta \ge 1$ measures how much the error term is relative to the optimal value.
Writing the approximation as above allows to see the parameters of $\P_{\mathrm T}$ that influence the relative error between the two problems. 

For a matrix $M\in \R^{n\times n}$ and a vector $w\in\R^n$, 
we treat $(M,w)$ as an $(n^2+n)$-dimensional vector 
where the inner product of $(M_1,w_1)$ and $(M_2,w_2)$ is defined as follows: 
$$\tliner{(M_1,w_1)}{(M_2,w_2)}_{\R^{n^2+n}} \equiv \tr (M_1^\T M_2) + w_1^\T w_2.$$
\begin{corollary} \label{multiplicative_err}
  We can rewrite the approximation inequality shown in  \cref{thm:generalized_main_theorem}
into the multiplicative one:
\begin{equation*}
    \left(\frac{r}{r+\eps\norm{y^*}}\right)\left(1+\frac{\sqrt{((3\eps_1 + 2\eps_3)^2+\eps_4^2)}}{\cos \theta^*}\right)\opt{\P_{\mathrm T}} \ge \opt{\CRP_{\mathrm T}} \ge \opt{\P_{\mathrm T}}
\end{equation*}
where $\theta^*$ is the angle between vectors $\xi\equiv (y^*{y^*}^\T, y^*)$ and $\zeta \equiv (Q, 2Qx_0+c) \in \R^{n^2+n}$. 
\end{corollary}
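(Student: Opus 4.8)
The plan is to take the additive estimate of \cref{thm:generalized_main_theorem} and rearrange it algebraically, reading off the error term as an inner product in $\R^{n^2+n}$. First I would isolate $\opt{\CRP_{\mathrm T}}$ in the lower bound of \cref{thm:generalized_main_theorem}: since $1+\eps\norm{y^*}/r>0$, dividing through gives
\begin{equation*}
  \opt{\CRP_{\mathrm T}} \le \frac{r}{r+\eps\norm{y^*}}\bigl(\opt{\P_{\mathrm T}} + E\bigr),\qquad E := (3\eps_1+2\eps_3)\norm{y^*}^2\norm{Q}_F + \eps_4\norm{y^*}\norm{2Qx_0+c}.
\end{equation*}
The lower inequality $\opt{\CRP_{\mathrm T}} \ge \opt{\P_{\mathrm T}}$ is already contained in \cref{thm:generalized_main_theorem}, so it remains only to bound $E$ in terms of $\opt{\P_{\mathrm T}}$ and $\theta^*$.

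Next I would record three identities coming from the inner product $\tliner{(M_1,w_1)}{(M_2,w_2)}_{\R^{n^2+n}} = \tr(M_1^\T M_2)+w_1^\T w_2$ applied to $\xi = (y^*{y^*}^\T, y^*)$ and $\zeta=(Q,2Qx_0+c)$: namely $\tliner{\xi}{\zeta}_{\R^{n^2+n}} = \tr(y^*{y^*}^\T Q)+{y^*}^\T(2Qx_0+c) = {y^*}^\T Qy^* + (2Qx_0+c)^\T y^* = \opt{\P_{\mathrm T}}$; next $\norm{\xi}^2 = \tr\bigl((y^*{y^*}^\T)^2\bigr)+\norm{y^*}^2 = \norm{y^*}^4+\norm{y^*}^2$; and $\norm{\zeta}^2 = \norm{Q}_F^2+\norm{2Qx_0+c}^2$. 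Consequently $\opt{\P_{\mathrm T}} = \norm{\xi}\norm{\zeta}\cos\theta^*$, so (assuming $\cos\theta^*\neq 0$, which holds as soon as $\opt{\P_{\mathrm T}}\neq 0$; note that by (A4$'$) the point $0$ is feasible for $\P_{\mathrm T}$, hence $\opt{\P_{\mathrm T}}\le 0$ and $\cos\theta^*\le 0$) we have $\opt{\P_{\mathrm T}}/\cos\theta^* = \norm{\xi}\norm{\zeta}$.

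The one genuine computation is bounding $E \le \sqrt{(3\eps_1+2\eps_3)^2+\eps_4^2}\,\norm{\xi}\norm{\zeta}$. Writing $a=3\eps_1+2\eps_3$ and $b=\eps_4$, I apply the Cauchy--Schwarz inequality in $\R^2$ to $(a\norm{y^*}^2, b\norm{y^*})$ and $(\norm{Q}_F, \norm{2Qx_0+c})$ to get $E \le \sqrt{a^2\norm{y^*}^4+b^2\norm{y^*}^2}\cdot\norm{\zeta}$, and then use the elementary inequality $a^2\norm{y^*}^4+b^2\norm{y^*}^2 \le (a^2+b^2)(\norm{y^*}^4+\norm{y^*}^2) = (a^2+b^2)\norm{\xi}^2$ (it reduces to $0\le a^2\norm{y^*}^2+b^2\norm{y^*}^4$). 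Plugging this into the displayed bound for $\opt{\CRP_{\mathrm T}}$ and substituting $\norm{\xi}\norm{\zeta}=\opt{\P_{\mathrm T}}/\cos\theta^*$ yields
\begin{equation*}
  \opt{\CRP_{\mathrm T}} \le \frac{r}{r+\eps\norm{y^*}}\left(\opt{\P_{\mathrm T}} + \sqrt{a^2+b^2}\,\frac{\opt{\P_{\mathrm T}}}{\cos\theta^*}\right) = \frac{r}{r+\eps\norm{y^*}}\left(1+\frac{\sqrt{(3\eps_1+2\eps_3)^2+\eps_4^2}}{\cos\theta^*}\right)\opt{\P_{\mathrm T}},
\end{equation*}
which is exactly the asserted upper bound; note the chain is valid irrespective of the signs of $\opt{\P_{\mathrm T}}$ and $\opt{\CRP_{\mathrm T}}$, since we only ever multiply inequalities by the positive factor $r/(r+\eps\norm{y^*})$. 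The main obstacle is really just this Cauchy--Schwarz bookkeeping: because $\norm{\xi}^2$ carries the extra $\norm{y^*}^2$ term beyond $\norm{y^*}^4$, the bound must be split into the two stages above so that the constant $\sqrt{(3\eps_1+2\eps_3)^2+\eps_4^2}$ comes out exactly, rather than with a spurious factor.
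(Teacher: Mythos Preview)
Your proposal is correct and follows essentially the same route as the paper: identify $\opt{\P_{\mathrm T}}=\tliner{\xi}{\zeta}$, bound the additive error $E$ by $\sqrt{(3\eps_1+2\eps_3)^2+\eps_4^2}\,\norm{\xi}\norm{\zeta}$ via Cauchy--Schwarz, and substitute $\norm{\xi}\norm{\zeta}=\opt{\P_{\mathrm T}}/\cos\theta^*$. The only cosmetic difference is the grouping in the Cauchy--Schwarz step (the paper pairs $(3\eps_1+2\eps_3,\eps_4)$ with $(\norm{y^*}^2\norm{Q}_F,\norm{y^*}\norm{2Qx_0+c})$ and then uses $\norm{y^*}^4\norm{Q}_F^2+\norm{y^*}^2\norm{2Qx_0+c}^2\le\norm{\xi}^2\norm{\zeta}^2$, whereas you split the two stages the other way); the resulting bound on $E$ is identical. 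Your write-up is in fact more complete than the paper's, which stops after bounding $E$ and leaves the final algebraic rearrangement and the sign discussion implicit.
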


\begin{proof}
By the definition of $\xi$ and $\zeta$, we have,
$$\opt{\P_{\mathrm T}} = \tliner{\xi}{\zeta}_{\R^{n^2+n}}.$$
We also have
\begin{align*}
  \norm{\xi}^2\norm{\zeta}^2 &= (\|y^*{y^*}^\T\|_F^2+\norm{y^*}^2)(\norm{Q}_F^2+\norm{2Qx_0+c}^2)\\
  &=   (\norm{y^*}^4+\norm{y^*}^2)(\norm{Q}_F^2+\norm{2Qx_0+c}^2)\\
  &\ge  \norm{y^*}^4\norm{Q}_F^2 + \norm{y^*}^2\norm{2Qx_0+c}^2,
\end{align*}
and now we can evaluate the error term in  \cref{thm:generalized_main_theorem}:
\begin{align*}
  E &\equiv  (3\eps_1 + 2\eps_3 )\norm{y^*}^2\norm{Q}_F + \eps_4\norm{y^*}\norm{2Qx_0+c}\\
    &\le \sqrt{((3\eps_1 + 2\eps_3)^2+\eps_4^2)(\norm{y^*}^4\norm{Q}_F^2 + \norm{y^*}^2\norm{2Qx_0+c}^2)}\\
    &\phantom{(3\eps_1 + 2\eps_3 )\norm{y^*}^2\norm{Q}_F + \eps_4\norm{y^*}\norm{2Qx_0+c}}(\text{by Schwarz's inequality})\\
    &\le \sqrt{((3\eps_1 + 2\eps_3)^2+\eps_4^2)}\norm{\xi}\norm{\zeta}\\
    &= \sqrt{((3\eps_1 + 2\eps_3)^2+\eps_4^2)} \frac{\tliner{\xi}{\zeta}_{\R^{n^2+n}}}{\cos \theta^*} \\
    &= \sqrt{((3\eps_1 + 2\eps_3)^2+\eps_4^2)} \frac{\opt{\P_{\mathrm T}}}{\cos \theta^*}.
\end{align*}
\end{proof}

\section{Scaling and Preconditioning}\label{sec:scaling_preconditioning}

In this section we will provide an error bound for $\CRP$ under a weaker assumption
than (A3) in \cref{assumption_1}.

\begin{assumption}\label{assumption_3} 
    {\rm (A1),(A2)} in \cref{assumption_1}, {\rm (A4')} in \cref{assumption_2} and 
    \begin{enumerate}
      \setcounter{enumi}{2}
      \renewcommand{\labelenumi}{\rm (A\arabic{enumi}')}
      \item At least one of the eigenvalues of Q is positive.
    \end{enumerate}
\end{assumption}

We consider the scaling $y = Uz$ and the following scaled problem:
\begin{equation*}
    \P_{\mathrm T}' \equiv \min_z \{z^\T Q'z + (U^\T (2Qx_0+c))^\T z \mid A'z \le b-Ax_0\}.
\end{equation*}
where $U$ is a scaling invertible matrix, $Q'= U^\T QU$ and $A'=AU$. 
Obviously, we have $\opt{\P_{\mathrm T}'} = \opt{\P_{\mathrm T}}.$ The corresponding convexified randomly projected problem becomes
\begin{equation*}
  \CRP'_{\mathrm T} \equiv \min_w \{w^\T \bar {Q'}^+ w + (PU^\T(2Qx_0+c))^\T w\mid \bar A'w \le b -Ax_0\},
\end{equation*}
and $\opt{\CRP'_{\mathrm T}} = \opt{\CRP_{\mathrm T}}$ holds.

In order to apply the arguments so far, we have to make sure that $\tr Q'$ is positive.
Von Neumann's trace inequality \cite{mirsky1975trace} implies that
\begin{equation}
  \tr Q' = \tr(U^\T QU) = \tr(QUU^\T) \le \sum_{i=1}^n \lambda_i\sigma_i^2,
    \label{eq:trace_ineq}
\end{equation}
where $\lambda_1\ge\lambda_2\ge\dots\ge\lambda_n$ are eigenvalues of $Q$ and $\sigma_1\ge\sigma_2\ge\dots\ge\sigma_n>0$ are singular values of $U$. The equality holds for \cref{eq:trace_ineq} when $U=V^\T \diag(\sigma_1, \sigma_2, \dots, \sigma_n)V$, where the eigenvalue decomposition of $Q$ is given by $Q=V^\T \diag(\lambda_1,\lambda_2,\dots,\lambda_n) V$.
Thus, if $\sigma_1$ and $\sigma_n$ are fixed, the maximum value of $\tr Q'$ is given by
$$\max_{\sigma_2,\dots,\sigma_{n-1}} \tr Q' = 
  \sum_{i=1}^l \lambda_i\sigma_1^2 + \sum_{i=l+1}^n \lambda_i\sigma_n^2,$$
where $l$ is the index determined by $\lambda_1\ge\dots\ge\lambda_l\ge 0 > \lambda_{l+1}\ge\dots\ge\lambda_n$.
Therefore, if $Q$ has at least one positive eigenvalue, we can generate $Q'$ so as to satisfy $\tr Q' > 0$, that
makes it possible to use the same error-bound analysis to  $\P_{\mathrm T}'$. 

Based on the above discussion, we consider the case where the form of $U$ is given as
$$U=V^\T \diag(\sigma_1, \dots, \sigma_1, \sigma_n, \dots, \sigma_n)V \quad \text{($l$ $\sigma_1$s and $n-l$ $\sigma_n$s)}.$$

In the previous theorems on approximation errors of $\CRP$,
$\tilde r$ and $\tilde k$ are used as discussed in \cref{rem:quantities_rk}. In the following theorem on the approximation error,
we will use $\tilde r$ and $\cond(U)$.

\begin{lemma}\label{lem:ball_scaling}
    There exists a closed ball $B\left(0, \dfrac{r}{\norm U}\right)$ which is contained in the polytope $A'z \le b-Ax_0$.
\end{lemma}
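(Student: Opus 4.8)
The plan is to reduce everything to the operator-norm inequality $\norm{Uz} \le \norm{U}\,\norm{z}$ together with the ball already furnished by assumption (A4$'$). Recall that, after the translation $y=x-x_0$, the polytope $Ay \le b-Ax_0$ contains the ball $B(0,r)$: this is exactly the statement recorded just before the definition of $\P_{\mathrm T}$, coming from the hypothesis that $B(x_0,r)\subseteq\{x\mid Ax\le b\}$. So I may take as given that $Ay\le b-Ax_0$ holds for every $y$ with $\norm{y}\le r$.

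First I would fix an arbitrary $z\in\R^n$ with $\norm{z}\le r/\norm{U}$ and set $y=Uz$. Then $\norm{y}=\norm{Uz}\le\norm{U}\,\norm{z}\le\norm{U}\cdot\frac{r}{\norm U}=r$, so $y\in B(0,r)$ and hence $Ay\le b-Ax_0$. Since $A'z=AUz=Ay$, this gives $A'z\le b-Ax_0$, i.e.\ $z$ is feasible for $\P_{\mathrm T}'$. As $z$ was an arbitrary point of the ball $B\bigl(0,r/\norm U\bigr)$, we conclude $B\bigl(0,r/\norm U\bigr)\subseteq\{z\mid A'z\le b-Ax_0\}$, which is the claim.

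There is really no substantive obstacle here; the proof is a two-line submultiplicativity argument. The only point requiring a little care is bookkeeping: making sure the ball we start from is the translated/centered one $B(0,r)$ inside $\{y\mid Ay\le b-Ax_0\}$ (not the original $B(x_0,r)$), and that the scaling $A'=AU$ is applied on the correct side so that $A'z=A(Uz)$. Once those identifications are made, the inequality $\norm{Uz}\le\norm U\norm z$ closes the argument immediately, and the radius $r/\norm U$ is exactly the largest one this crude bound yields.
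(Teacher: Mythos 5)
Your argument is correct and is exactly the paper's proof: take $z$ with $\norm{z}\le r/\norm U$, use $\norm{Uz}\le\norm U\norm z\le r$ to place $Uz$ in the ball $B(0,r)$ contained in $\{y\mid Ay\le b-Ax_0\}$, and conclude $A'z=A(Uz)\le b-Ax_0$. No differences worth noting.
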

\begin{proof}\ \par
    If $\norm z \le \dfrac{r}{\norm U}$, then $\norm{Uz}\le\norm U \norm z\le r$. Since $B(0,r)\subset \{x\mid Ax\le b-Ax_0\}$, we have $Uz\in\{x\mid Ax\le b-Ax_0\}$ and 
    $A'z = A(Uz) \le b-Ax_0.$
\end{proof}

\begin{theorem}[Approximation theorem under \cref{assumption_3}]\label{thm:scaled_main_theorem}
    Define
    $\ds\tilde r \equiv\frac{\norm{Q}_F^2}{\norm{Q}^2}.$
    Let 
    $\eps, \eps_1, \eps_3, \eps_4 \in (0,1], \delta_1, \delta_2 \in (0,1/2)$.
  
    Suppose that 
    \begin{enumerate}
      \setlength{\leftskip}{3.0em}
      \renewcommand{\labelenumi}{(\roman{enumi})} 
      \item $d\ge \max\left\{\dfrac{\log ((24\operatorname{rank} Q +6)/\delta_1)}{\C_{0}\eps_1^2}, 
                            \dfrac{\log (6/\delta_1)}{\C_{0}\eps_4^2}, 
                            \dfrac{\log (12m/\delta_1)}{\C_{0}\eps^2}\right\} $, 
      \item $d \ge \dfrac{12\C^6}{\C_1\eps_3^2}-\dfrac{\log \delta_2}{3}$,
      \item $\ds d < \dfrac{2\C^3\tilde r-\cond(U)^4 \tilde k}{\eps_3\cond(U)^2\sqrt{\tilde r}}$.
    \end{enumerate}
  
    Then with probability at least $1-\delta_1-\delta_2$, the following inequality holds
    \begin{multline}
    \opt{\CRP'_{\mathrm T}} \ge \opt{\P_{\mathrm T}} 
    = \opt{\P'_{\mathrm T}} \\
    \ge \left(1+\eps \cond(U)\frac{\norm{y^*}}{r}\right) \opt{\CRP'_{\mathrm T}} 
    - (3\eps_1 + 2\eps_3)\cond(U)^2\norm{y^*}^2\norm{Q}_F \\- \eps_4\cond(U)\norm{y^*}\norm{2Qx_0+c}.
    \end{multline}
\end{theorem}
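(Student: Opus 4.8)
The plan is to deduce the statement from \cref{thm:main_theorem} (the case $x_0=0$ of \cref{thm:generalized_main_theorem}) applied to the scaled problem $\P'_{\mathrm T}$, and then to translate the conclusion back through the invertible change of variables $y=Uz$. Write $\sigma_1=\norm U$ and $\sigma_n=1/\norm{U^{-1}}$ for the extreme singular values of $U$, so $\cond(U)=\sigma_1/\sigma_n$, and set $z^*=U^{-1}y^*$, which is an optimum of $\P'_{\mathrm T}$. Since $\opt{\P'_{\mathrm T}}=\opt{\P_{\mathrm T}}$ and $\opt{\CRP'_{\mathrm T}}=\opt{\CRP_{\mathrm T}}$, it suffices to prove the displayed inequality for the primed problems.

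First I would check that $\P'_{\mathrm T}$ satisfies \cref{assumption_1}. (A1) holds because $\opt{\P'_{\mathrm T}}=\opt{\P}-x_0^\T Qx_0-c^\T x_0$ is finite; (A2) holds after the harmless normalization of each row of $A'=AU$ (rescaling the corresponding entry of $b-Ax_0$), which leaves the feasible set, $m$, and $\operatorname{rank}Q'=\operatorname{rank}Q$ unchanged; (A3), i.e.\ $\tr Q'>0$, holds for the prescribed block form $U=V^\T\diag(\sigma_1,\dots,\sigma_1,\sigma_n,\dots,\sigma_n)V$ by the Von~Neumann computation preceding the theorem --- writing $S_+:=\sum_{\lambda_i\ge0}\lambda_i>0$ (nonempty since $Q$ has a positive eigenvalue) and $S_-:=\sum_{\lambda_i<0}\lambda_i\le0$ one has $\tr Q'=\sigma_1^2S_++\sigma_n^2S_-$, which is positive once $\cond(U)$ is large enough; finally \cref{lem:ball_scaling} furnishes $B(0,r/\sigma_1)$ inside $\{A'z\le b-Ax_0\}$, which is (A4) with radius $r/\sigma_1$. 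Applying \cref{thm:main_theorem} to this instance (substituting $Q\leftarrow Q'$, $c\leftarrow U^\T(2Qx_0+c)$, $r\leftarrow r/\sigma_1$, $x^*\leftarrow z^*$, $\tilde r\leftarrow\tilde r'=\norm{Q'}_F^2/\norm{Q'}^2$, $\tilde k\leftarrow\tilde k'=\tr Q'/\norm{Q'}$, $\mathcal D\leftarrow\C$) yields, with probability at least $1-\delta_1-\delta_2$,
\[
  \opt{\CRP'_{\mathrm T}}\ge\opt{\P'_{\mathrm T}}\ge\Bigl(1+\eps\tfrac{\norm{z^*}}{r/\sigma_1}\Bigr)\opt{\CRP'_{\mathrm T}}-(3\eps_1+2\eps_3)\norm{z^*}^2\norm{Q'}_F-\eps_4\norm{z^*}\norm{U^\T(2Qx_0+c)}.
\]

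Then I would translate this back using $\norm{z^*}\le\norm{y^*}/\sigma_n$, $\norm{Q'}_F=\norm{UQU}_F\le\sigma_1^2\norm{Q}_F$, and $\norm{U^\T(2Qx_0+c)}\le\sigma_1\norm{2Qx_0+c}$. These give $\eps\norm{z^*}/(r/\sigma_1)\le\eps\cond(U)\norm{y^*}/r$, $\norm{z^*}^2\norm{Q'}_F\le\cond(U)^2\norm{y^*}^2\norm{Q}_F$, and $\norm{z^*}\norm{U^\T(2Qx_0+c)}\le\cond(U)\norm{y^*}\norm{2Qx_0+c}$. Because $Ax_0\le b$, the point $0$ is feasible for $\CRP'_{\mathrm T}$ with objective value $0$, so $\opt{\CRP'_{\mathrm T}}\le0$; hence replacing the coefficient $1+\eps\norm{z^*}/(r/\sigma_1)\ (\ge1)$ by the larger $1+\eps\cond(U)\norm{y^*}/r$ only weakens the lower bound, and enlarging the error term likewise only weakens it. Combining these, and recalling $\opt{\CRP'_{\mathrm T}}=\opt{\CRP_{\mathrm T}}$, $\opt{\P'_{\mathrm T}}=\opt{\P_{\mathrm T}}$, gives exactly the claimed inequality.

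I expect the main obstacle to be showing that hypotheses (i)--(iii) of \cref{thm:scaled_main_theorem} imply hypotheses (i)--(iii) of \cref{thm:main_theorem} for $\P'_{\mathrm T}$ with $\mathcal D=\C$. Hypothesis (i) transfers verbatim, since $m$ and $\operatorname{rank}Q'$ are unchanged. Hypothesis (ii) transfers because $\tr Q'>0$ makes $\tilde k'>0$, so the term $-\tfrac2{\eps_3}\tilde k'/\sqrt{\tilde r'}$ is negative and the requirement reduces to $d\ge\tfrac{12\C^6}{\C_1\eps_3^2}-\tfrac{\log\delta_2}{3}$, which is (ii) here. The delicate part is (iii): one must show $d<\tfrac{2\C^3\tilde r-\cond(U)^4\tilde k}{\eps_3\cond(U)^2\sqrt{\tilde r}}$ implies $d<\tfrac{2\C^3\tilde r'-\tilde k'}{\eps_3\sqrt{\tilde r'}}$. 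Since \cref{assumption_3} guarantees only one positive eigenvalue of $Q$ (not $\tr Q>0$), the sign pattern of the spectrum must be used: because $U=V^\T DV$ is constant on the nonnegative and on the negative eigenspaces of $Q=V^\T\Lambda V$, the matrix $Q'=V^\T(D\Lambda D)V$ is diagonal in the eigenbasis of $Q$, giving the closed forms $\tr Q'=\sigma_1^2S_++\sigma_n^2S_-$, $\norm{Q'}_F^2=\sigma_1^4\sum_{\lambda_i\ge0}\lambda_i^2+\sigma_n^4\sum_{\lambda_i<0}\lambda_i^2$, and $\norm{Q'}=\max\{\sigma_1^2\max_{\lambda_i\ge0}\lambda_i,\ \sigma_n^2\max_{\lambda_i<0}|\lambda_i|\}$. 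From $\norm{Q'}_F\ge\sigma_n^2\norm{Q}_F$ and $\norm{Q'}\le\sigma_1^2\norm{Q}$ one obtains $\tilde r'\ge\tilde r/\cond(U)^4$, hence $\sqrt{\tilde r'}\ge\sqrt{\tilde r}/\cond(U)^2$; writing $\tfrac{\tilde k'}{\sqrt{\tilde r'}}=\tfrac{\tr Q'}{\norm{Q'}_F}$ and $\tfrac{\tilde k}{\sqrt{\tilde r}}=\tfrac{\tr Q}{\norm{Q}_F}$, it then remains to compare $\tfrac{\tr Q'}{\norm{Q'}_F}$ with $\cond(U)^2\tfrac{\tr Q}{\norm{Q}_F}$ using these formulas, the ordering $\lambda_1\ge\cdots\ge\lambda_l\ge0>\lambda_{l+1}\ge\cdots\ge\lambda_n$, and (if needed) the mild lower bound on $\cond(U)$ already imposed to ensure $\tr Q'>0$; substituting the resulting inequalities into the rearranged form of (iii) closes the argument.
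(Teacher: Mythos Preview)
Your plan is essentially the paper's own proof: check that $\P'_{\mathrm T}$ meets \cref{assumption_1} (with the ball of radius $r/\norm U$ from \cref{lem:ball_scaling}), apply \cref{thm:generalized_main_theorem} to it, and then translate the bound back via $\norm{z^*}\le\norm{U^{-1}}\norm{y^*}$, $\norm{Q'}_F\le\sigma_1^2\norm{Q}_F$, and $\norm{U^\T(2Qx_0+c)}\le\sigma_1\norm{2Qx_0+c}$. The paper does exactly this; for (ii) it too uses $\tilde k'>0$ to drop the negative term, and for (iii) it rewrites $(2\C^3\tilde r'-\tilde k')/(\eps_3\sqrt{\tilde r'})=\tfrac1{\eps_3}\bigl(2\C^3\norm{Q'}_F/\norm{Q'}-\tr Q'/\norm{Q'}_F\bigr)$ and bounds the two terms separately.

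A word of caution on the step you flag as delicate. The paper closes it with the three inequalities $\norm{Q'}\le\sigma_1^2\norm{Q}$, $\norm{Q'}_F\ge\sigma_n^2\norm{Q}_F$, and $\tr Q'\le\sigma_1^2\tr Q$, combining them into $\tr Q'/\norm{Q'}_F\le\cond(U)^2\,\tr Q/\norm{Q}_F$. The first two are fine, but with the prescribed block form $U=V^\T\diag(\sigma_1,\dots,\sigma_1,\sigma_n,\dots,\sigma_n)V$ one computes $\tr Q'-\sigma_1^2\tr Q=(\sigma_n^2-\sigma_1^2)S_-\ge0$, so the third inequality actually points the other way whenever $S_-<0$. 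More directly: in the regime $\tr Q<0$ that this section is designed for, the target comparison $\tr Q'/\norm{Q'}_F\le\cond(U)^2\,\tr Q/\norm{Q}_F$ is impossible, since the left side is positive (you arranged $\tr Q'>0$) while the right side is negative. Hence the term-by-term route you sketch --- and that the paper adopts --- does not by itself establish (iii)$\Rightarrow$(iii$'$); if you pursue this line you will need either a direct comparison of the two full expressions in (iii) and (iii$'$), or an explicit additional hypothesis tying $\cond(U)$ to the spectrum of $Q$, to repair this step.
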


\begin{proof}
    Define
    $\ds\tilde r' \equiv\frac{\norm{Q'}_F^2}{\norm{Q'}^2},  \tilde k' \equiv \frac{\tr Q'}{\norm{Q'}}, \Lambda = \diag(\lambda_1,\lambda_2,\dots,\lambda_n)$, and \\ $\Sigma = \diag(\sigma_1,\sigma_2,\dots,\sigma_n)$.
    First, we observe that
    \begin{align*}
      &\norm{Q'} = \norm{U^\T QU} = \norm{V^\T \Sigma \Lambda \Sigma V} = \norm{\Sigma \Lambda \Sigma } \le \sigma_1^2\norm{\Lambda} = \sigma_1^2\norm{Q},\\
      &\norm{Q'}_F = \norm{U^\T QU}_F = \norm{V^\T \Sigma \Lambda \Sigma V}_F = \norm{\Sigma \Lambda \Sigma }_F \ge \sigma_n^2\norm{\Lambda}_F = \sigma_n^2 \norm{Q}_F,\\
      &\tr Q' = \tr (U^\T QU) = \tr (V^\T \Sigma \Lambda \Sigma V) = \tr (\Sigma \Lambda \Sigma)  \le \sigma_1^2\tr \Lambda = \sigma_1^2 \tr Q.
    \end{align*}
    Then, on the condition (iii), 
    we have
    \begin{align*}
          \dfrac{2\C^3\tilde r' - \tilde k'}{\eps_3 \sqrt{\tilde r'}} 
        &=\frac{1}{\eps_3}\left(2\C^3\frac{\norm{Q'}_F}{\norm{Q'}} - \frac{\tr Q'}{\norm{Q'}_F}\right) \\
        &\ge \frac{1}{\eps_3}\left(2\C^3\frac{\norm{Q}_F}{\cond(U)^2\norm{Q}} - \cond(U)^2\frac{\tr Q}{\norm{Q}_F}\right)\\
        &=\dfrac{1}{\eps_3\cond(U)^2}\left(2\C^3\sqrt{\tilde r}-\cond(U)^4\frac{\tilde k}{\sqrt{\tilde r}}\right)\\
        &=\dfrac{2\C^3\tilde r-\cond(U)^4 \tilde k}{\eps_3\cond(U)^2\sqrt{\tilde r}}.
    \end{align*}
    On the other hand, for the condition (ii), it is easy to see that
    $$ \dfrac{12\C^6}{\C_1\eps_3^2}-\dfrac{\log \delta_2}{3} \ge -\dfrac{2}{\eps_3}\dfrac{\tilde k'}{\sqrt{\tilde r'}} + \dfrac{12\C^6}{\C_1\eps_3^2}-\dfrac{\log \delta_2}{3}.$$
    Thus, under the conditions (i)-(iii), we have 
    \begin{enumerate}
        \setlength{\leftskip}{3.0em}
        \renewcommand{\labelenumi}{(\roman{enumi}')} 
        \item $d\ge \max\left\{\dfrac{\log ((24\operatorname{rank} Q +6)/\delta_1)}{\C_{0}\eps_1^2}, 
                              \dfrac{\log (6/\delta_1)}{\C_{0}\eps_4^2}, 
                              \dfrac{\log (12m/\delta_1)}{\C_{0}\eps^2}\right\} $, 
        \item $d \ge -\dfrac{2}{\eps_3}\dfrac{\tilde k'}{\sqrt{\tilde r'}} + \dfrac{12\C^6}{\C_1\eps_3^2}-\dfrac{\log \delta_2}{3}$,
        \item $d < \dfrac{2\C^3\tilde r' - \tilde k'}{\eps_3 \sqrt{\tilde r'}},$
    \end{enumerate}
    which are the same to conditions in  \cref{thm:generalized_main_theorem}.
    By \cref{thm:generalized_main_theorem} and \cref{lem:ball_scaling}, we have, with probability at least $1-\delta_1-\delta_2$, 
    \begin{multline}\label{eq:errorbound_scaling}
    \opt{\CRP'_{\mathrm T}} \ge \opt{\P'_{\mathrm T}}\\
    \ge \left(1+\eps \frac{\norm{z^*}}{r/\norm{U}}\right) \opt{\CRP'_{\mathrm T}}  - (3\eps_1 + 2\eps_3 )\norm{z^*}^2\norm{Q'}_F\\
    - \eps_4\norm{z^*}\norm{U^\T(2Qx_0+c)}.
    \end{multline}
    Note that $\norm{z^*}=\norm{U^{-1}y^*}\le\norm{U^{-1}}\norm{y^*}$, which leads to
    \begin{align*}
        &\frac{\norm{z^*}}{r/\norm{U}} \le \norm{U}\norm{U^{-1}} \frac{\norm{y^*}}{r} = \cond(U)\frac{\norm{y^*}}{r},\\
        &\norm{z^*}^2\norm{Q'}_F \le \norm{U^{-1}}^2\norm{U}^2 \norm{y^*}^2\norm{Q}_F = \cond(U)^2\norm{y^*}^2\norm{Q}_F,\\
        &\norm{z^*}\norm{U^\T(2Qx_0+c)} \le \norm{U^{-1}}\norm{U}\norm{y^*}\norm{2Qx_0+c}=\cond(U)\norm{y^*}\norm{2Qx_0+c}.
    \end{align*}
    By using these inequalities into \cref{eq:errorbound_scaling},
    we obtain an error bound in the claim.
\end{proof}

We can derive an approximation error of $\CRP'_{\mathrm T}$ in a multiplicative form similar to \cref{multiplicative_err}
under \cref{assumption_3}, though we omit the description.

\section{Numerical Experiments}\label{sec:numerical_experiments}

\subsection{Randomly generated problems}
We perform some experiments on randomly generated non-convex QPs. 
In the previous sections, we discussed the error between the optimal values of
the original problem $\P$ and of the convexified projected problem $\CRP$. In 
this section, to estimate the error in practice, 
we compare $\opt{\P}, \opt{\RP}$ and $\opt{\CRP}$ 
so that the errors by random projection and convexification can be verified separately.

Unfortunately, it is difficult to find global optimal solutions of
$\P$ and $\RP$ because they are non-convex QPs. Therefore, 
we use D.C. algorithms \cite{tao2005dc} with a multi-start strategy with 10 randomly chosen initial points
to find a best possible approximated optimal value.
Thus, in this section,  
$\opt{\P}$ and $\opt{\RP}$ denote best possible approximation values of the true optimal values.



Random instances are generated as follows:
$Q$ is a diagonal matrix whose diagonal entries are first drawn from some distribution independently and next normalized to $\norm{Q}_F=1$.
$c=1/\sqrt n \bm1$, $A_i$ $(i=1,2,\dots,m')$ are random unit vectors and $b_i = \bm1$. 
We also add the constraints $-\bm 1 \le x \le \bm 1$ to ensure the boundedness of the feasible region. The total number of constraints is given by $m=m'+2n$.

\begin{figure}[ht]
  \centering
      \subfigure[$\opt{\RP}-\opt{\P}$]{%
          \includegraphics[clip, width=0.49\columnwidth]{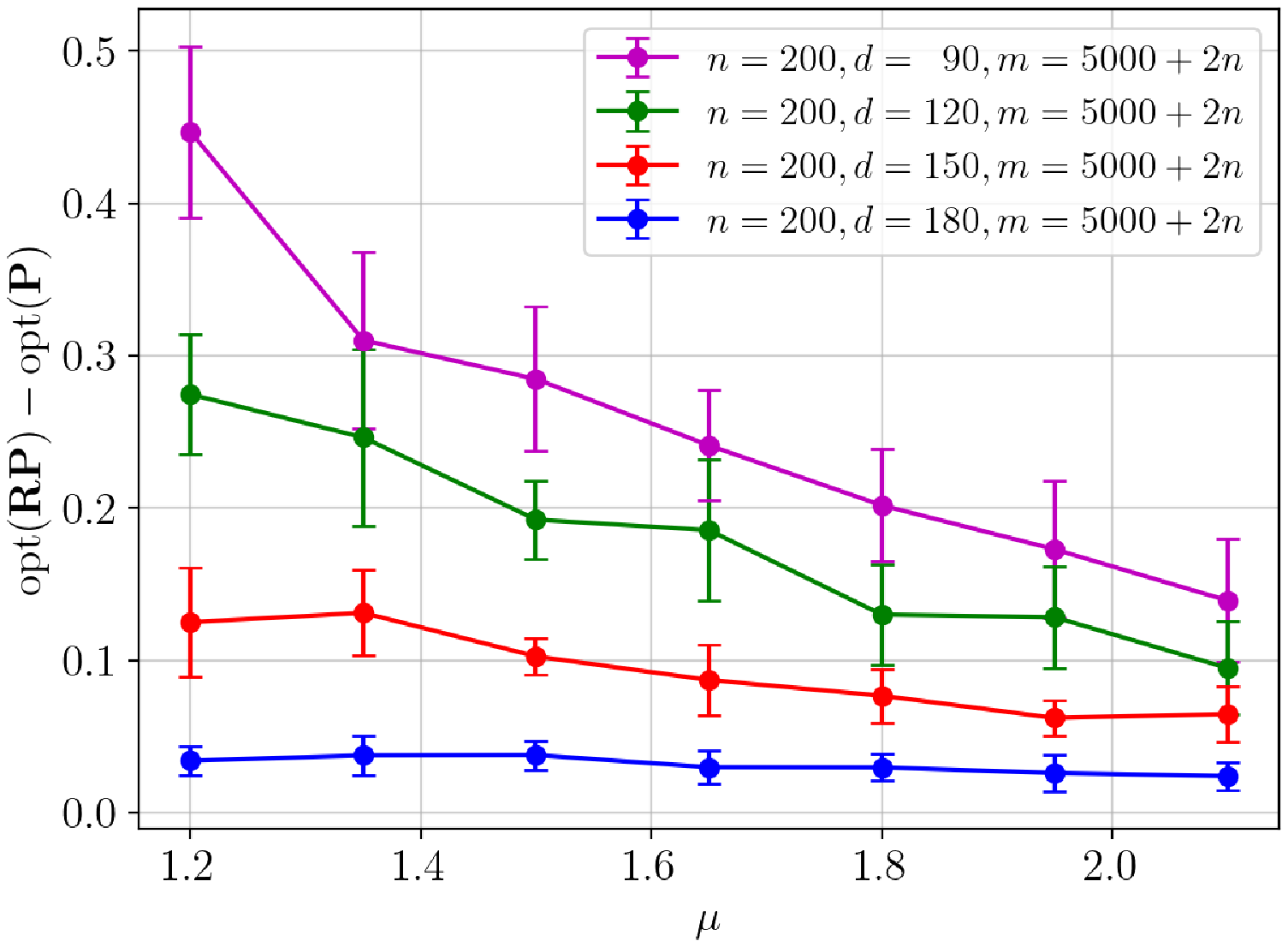}}%
      \subfigure[$\opt{\CRP}-\opt{\RP}$]{%
          \includegraphics[clip, width=0.49\columnwidth]{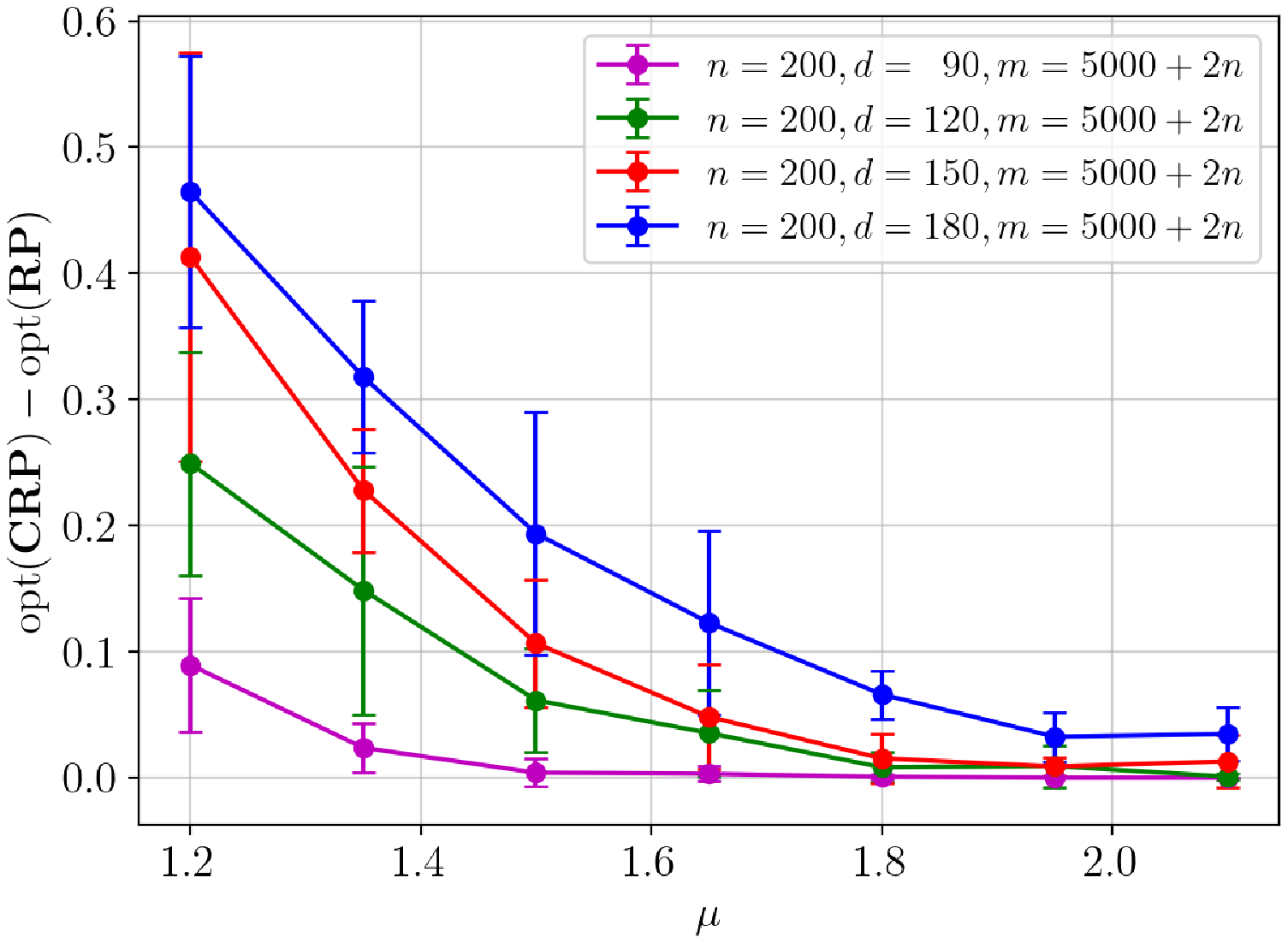}}\\%
      \subfigure[$\opt{\CRP}-\opt{\P}$]{%
          \includegraphics[clip, width=0.49\columnwidth]{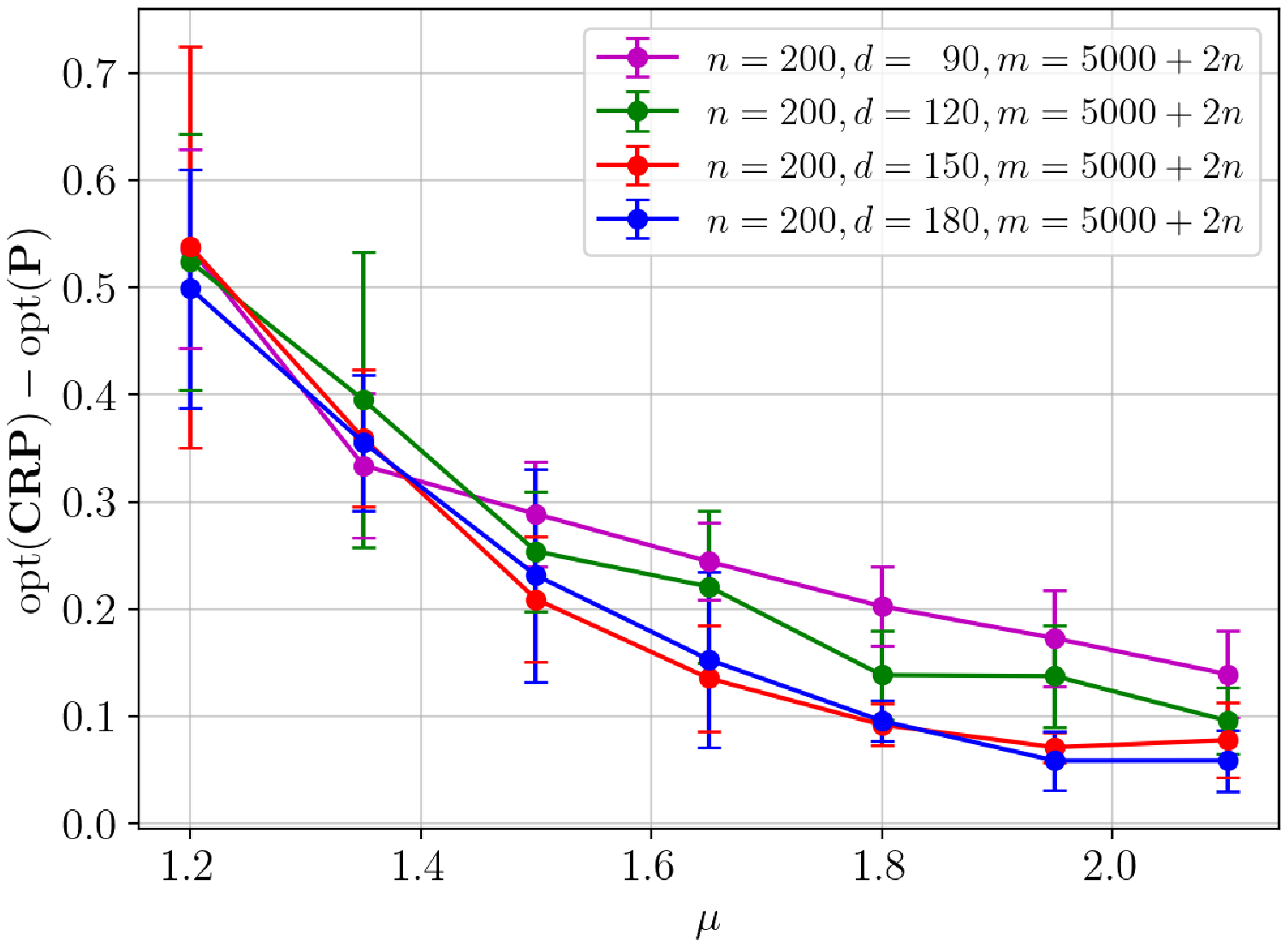}}%
      \caption{The difference between $\opt{\P}, \opt{\RP}$ and $\opt{\CRP}$ versus $\mu$.}
      \label{fig:P_RP_CRP}
\end{figure}

We set $n=200, m'=5000, d=90,120,150,180$ and 
the distribution for randomly chosen diagonal entries of $Q$ is $\mathrm N(\mu,1^2)$,
where $\mu$ is the parameter relating to the convexity of the original problem. 
More precisely, the larger $\mu$ is, the more positively the eigenvalue distribution of $Q$ is skewed.
We calculated optimal values 10 times each for fixed $(d, \mu)$. The average and the standard deviation of the difference between $\opt{\P}, \opt{\RP}$ and $\opt{\CRP}$ 
are shown in \cref{fig:P_RP_CRP}.

\cref{fig:P_RP_CRP}(a) shows $\opt{\RP}-\opt{\P}$, the error due to random projection. 
We confirm that $\opt{\RP}-\opt{\P}$ gets smaller as $d$ gets larger. 
This is because as $d$ gets larger, 
random projections become more likely to preserve geometric quantities or
function values (\cref{lem:basic_properties_of_random_matrix}).

Next we discuss the results shown in \cref{fig:P_RP_CRP}(b), that is the error due to convexification: $\opt{\CRP}-\opt{\RP}$. 
The first observation we can make is that $d$ should be smaller for $\opt{\CRP}-\opt{\RP}$ to be smaller. This is the opposite of the previous observation.
This fact comes from \cref{lem:concentration_of_PQP}. Indeed,
for $\opt{\CRP}-\opt{\RP}$ to be small, $\bar Q^+$ must be a good approximation of $\bar Q$, or equivalently, 
most eigenvalues of $\bar Q$ must be positive, which will be satisfied by setting $d$ small since $\bar Q \approx \frac{\tr Q}{d}I_d$.
We also see that $\opt{\CRP}-\opt{\RP}$ decreases monotonically with respect to $\mu$. 
This is because if $\mu$ is large, then $\P$ and $\RP$ will be nearly convex problems and the error caused by convexification will be small. 

\cref{fig:P_RP_CRP}(c) shows $\opt{\CRP}-\opt{\P}$ we mainly discuss in this paper. 
Since we use convexification, the error between $\opt{\CRP}$ and $\opt{\P}$ 
highly depends on the eigenvalue distribution of $Q$. 
Our method behaves better when the percentage of positive eigenvalues of $Q$ is large.

\subsection{Application for Support Vector Machine Classification with Indefinite Kernels}
\label{subsec:SVM}
Let $K\in\R^{n\times n}$ be a given kernel matrix and $y\in\R^n$ be the vector of labels, with $Y = \diag(y)$.
The classic  soft margin SVM problem  \cite{Cortes1995,lanckriet2004learning} is formulated as:
\begin{equation}\label{eq:SVM_original_problem}
    \min_\alpha \{\alpha^\T YKY \alpha-2\alpha^\T \bm 1  \mid 0\le \alpha\le C\bm 1, \alpha^\T y=0\},
\end{equation}
where $\alpha\in\R^n$ and $C$ is the SVM misclassification penalty which is fixed to $1$ in this paper. 
There are some works (see e.g., \cite{Ong2004,Haasdonk2005,Luss2009,laub2004feature})
investigating applications where kernel matrices formed using similarity measures are not positive semidefinite
and algorithms for SVM \cref{eq:SVM_original_problem} with indefinite $K$.
If $K$ is an indefinite kernel matrix, \cref{eq:SVM_original_problem} is a non-convex QP. 
The corresponding convexified randomly projected problem is
\begin{equation}\label{eq:indefSVM}
    \min_u \{u^\T \F^+(PYKYP^\T) u-2u^\T P\bm 1  \mid 0\le P^\T u\le C\bm 1, u^\T Py=0\},
\end{equation}
where $P\in\R^{d\times n}$ is a random matrix and $u\in\R^{d}$. 
Although we can not apply our theoretical guarantees because the original problem does not have a full dimensional feasible region,
we expect $P^\T u^*$ to be a good approximation of the optimum of the problem \cref{eq:SVM_original_problem}, where $u^*$ is an optimum of the convexified randomly projected problem \cref{eq:indefSVM}.

We performed the experiments on 
the image data of 0, 1 and 7 from the MNIST handwritten digits database \cite{lecun-mnisthandwrittendigit-2010} using
the indefinite simpson score \cite{laub2004feature} as a kernel function value to measure the similarity of two images. 
We experimented with three different binary classifications: 0 and 1, 0 and 7, and 1 and 7.
In all cases, we choose 1000($=n$) train data where each class has 500 points or images
and 400 test data where each class has 200.
The results are shown in \cref{Table:mnist_SVM_0vs1,Table:mnist_SVM_0vs7,Table:mnist_SVM_1vs7}.
We have solved \cref{eq:indefSVM} 20 times with different random $P$ for each $d$ and
 evaluated the optimum of \cref{eq:indefSVM} with test data.
``Training Accuracy'' and ``Test Accuracy'' in the tables refer to the average and standard deviation among 20 training-accuracy and test-accuracy values.
 We confirmed that SVM with 
 simpson score works to find a good approximate solution of the original problem \cref{eq:SVM_original_problem} for appropriate $d$.
We also calculated the accuracy using the optimal solution of the following problem obtained by convexifying \cref{eq:SVM_original_problem} directly:
\begin{equation} \label{accuracyCP}
    \min_\alpha \{\alpha^\T \F^+(YKY) \alpha-2\alpha^\T \bm 1  \mid 0\le \alpha\le C\bm 1, \alpha^\T y=0\},
\end{equation}
and obtained $63.90\%$ training-accuracy and $68.50\%$ test-accuracy, 
$62.00\%$ training-accuracy and $61.00\%$ test-accuracy and
$66.70\%$ training-accuracy and $53.00\%$ test-accuracy for the binary classification of 0 and 1, 0 and 7, and 1 and 7, respectively, 
so that we conclude that 
combining random projections and convexification performs as well or better than just convexification alone.

\begin{table}[p]
    \centering
    \caption{Accuracy (average and standard deviation of 20 times for each $d$) of \cref{eq:indefSVM} for the MNIST 0-1, while \cref{accuracyCP} achieved $63.90\%$ training-accuracy and $68.50\%$ test-accuracy }
      \begin{tabular}{|c|c|c|} \hline
        $d$ & Training Accuracy (\%) & Test Accuracy (\%) \\ \hline \hline
        300 & 48.47 $\pm$ 17.13 & 48.45 $\pm$ 16.89 \\ \hline			
        400 & 62.92 $\pm$ 17.90 & 63.55 $\pm$ 20.13 \\ \hline			
        500 & 80.16 $\pm$ 17.17 & 82.25 $\pm$ 18.53 \\ \hline			
        600 & 95.58 $\pm$ \phantom{0}0.43 & 97.18 $\pm$ \phantom{0}0.40 \\ \hline			
        700 & 97.12 $\pm$ \phantom{0}0.27 & 98.48 $\pm$ \phantom{0}0.46 \\ \hline			
        800 & 97.15 $\pm$ \phantom{0}0.56 & 96.53 $\pm$ \phantom{0}0.43 \\ \hline			
        900 & 88.40 $\pm$ \phantom{0}3.01 & 87.53 $\pm$ \phantom{0}4.12 \\ \hline			
        1000 & 64.60 $\pm$ \phantom{0}1.56 & 68.53 $\pm$ \phantom{0}2.40 \\ \hline
      \end{tabular}
      \label{Table:mnist_SVM_0vs1}
\end{table}

\begin{table}[p]
    \centering
    \caption{Accuracy (average and standard deviation of 20 times for each $d$) of \cref{eq:indefSVM} for the MNIST 0-7, while \cref{accuracyCP} achieved $62.00\%$ training-accuracy and $61.00\%$ test-accuracy }
      \begin{tabular}{|c|c|c|} \hline
        $d$ & Training Accuracy (\%) & Test Accuracy (\%) \\ \hline \hline
        300 & 49.90 $\pm$ \phantom{0}9.24 & 49.40 $\pm$ 10.27 \\ \hline
        400 & 59.67 $\pm$ 14.56 & 61.83 $\pm$ 16.52 \\ \hline
        500 & 76.99 $\pm$ 20.22 & 79.18 $\pm$ 20.56 \\ \hline
        600 & 95.73 $\pm$ \phantom{0}0.29 & 98.43 $\pm$ \phantom{0}0.18 \\ \hline
        700 & 94.25 $\pm$ \phantom{0}0.85 & 95.53 $\pm$ \phantom{0}0.91 \\ \hline
        800 & 84.00 $\pm$ \phantom{0}2.99 & 81.93 $\pm$ \phantom{0}3.95 \\ \hline
        900 & 69.37 $\pm$ \phantom{0}3.42 & 66.15 $\pm$ \phantom{0}2.57 \\ \hline
        1000 & 62.42 $\pm$ \phantom{0}0.96& 61.05 $\pm$ \phantom{0}0.57 \\ \hline
      \end{tabular}
      \label{Table:mnist_SVM_0vs7}
\end{table}

\begin{table}[p]
    \centering
    \caption{Accuracy (average and standard deviation of 20 times for each $d$) of  \cref{eq:indefSVM} for the MNIST 1-7, while \cref{accuracyCP} achieved $66.70\%$ training-accuracy and $53.00\%$ test-accuracy }
      \begin{tabular}{|c|c|c|} \hline
        $d$ & Training Accuracy (\%) & Test Accuracy (\%) \\ \hline \hline
        300 & 55.09 $\pm$ 14.40 & 53.43 $\pm$ 12.80 \\ \hline
        400 & 64.88 $\pm$ 15.64 & 62.80 $\pm$ 14.14 \\ \hline
        500 & 89.00 $\pm$ 12.61 & 83.30 $\pm$ 11.56 \\ \hline
        600 & 95.00 $\pm$ \phantom{0}0.85 & 90.23 $\pm$ \phantom{0}1.54 \\ \hline
        700 & 92.56 $\pm$ \phantom{0}1.60 & 86.53 $\pm$ \phantom{0}2.95 \\ \hline
        800 & 84.26 $\pm$ \phantom{0}3.09 & 76.60 $\pm$ \phantom{0}4.27 \\ \hline
        900 & 72.44 $\pm$ \phantom{0}3.20 & 60.83 $\pm$ \phantom{0}4.19 \\ \hline
        1000 & 66.96 $\pm$ \phantom{0}0.63 & 53.60 $\pm$ \phantom{0}0.78 \\ \hline
      \end{tabular}
      \label{Table:mnist_SVM_1vs7}
\end{table}

\section{Conclusions}\label{sec:conclusions}
Random projections have been applied to solve optimization problems in suitable lower-dimensional spaces  
 in various existing works.
However, to the best of our knowledge, it is the first time they are used to build a convex approximation for a non-convex quadratic optimization problem.
In this paper, we proved that the randomly projected problem $\RP$ that is proposed in \cite{d2019random} is close to a convex problem. This allowed us to propose a convexified randomly projected problem, $\CRP$, that we used to obtain an approximate optimal value of
$\P$. 

In our framework, the existence of a value $d$, that will correspond to the dimension after projections, depends on the distribution of the eigenvalues of $Q$. We proved that even if $\tr{Q}$ is negative then, under some additional error cost, we could use scaling and preconditioning to transform the problem into a new one where the theory applies. 
To confirm that our method is practical,
we applied our framework to SVM classification problem with indefinite kernel, though the problem setting does not satisfy the conditions necessary for the theoretical guarantee. 
As shown in \cref{subsec:SVM}, 
our method is able to find  good approximate global optimal solutions by only solving $\CRP$, which scores as well or better than solving a problem that is only a convexification of the original problem.
At least, it is worth trying our method for a non-convex quadratic problem 
since $\CRP$ is convex and its size is smaller than the original problem and $\opt{\CRP}$ can be obtained by the solver with few computational resources.

One of the directions for future research is to generalize the objective function and constraints, 
which is still difficult since our argument depends on \cref{lem:basic_properties_of_random_matrix} that shows that random projections preserve linear or quadratic function values.
For a general objective function, 
we can consider an iterative method using quadratic approximation of the function at each point,
but obtaining theoretical guarantees in such a case needs further investigations.



\bibliographystyle{siamplain}
\bibliography{ref}
\end{document}